\documentclass[12pt]{article}
\usepackage[linesnumbered,ruled,vlined]{algorithm2e}
\usepackage{algpseudocode}
\usepackage{amsmath}
\usepackage{amsfonts}
\usepackage{amssymb}
\usepackage{amsthm}
\usepackage{authblk}
\usepackage{bm}
\usepackage{booktabs} 
\usepackage[font=scriptsize]{caption}
\captionsetup[figure]{labelfont=bf, labelsep=space}
\captionsetup[table]{labelfont=bf, labelsep=space}
\usepackage{color}
\usepackage{esvect}
\usepackage{enumitem}
\usepackage{extarrows}
\usepackage{float}
\usepackage{footmisc} 
\usepackage{graphicx}
\usepackage{hhline}
\usepackage[utf8]{inputenc}
\usepackage{hyperref} 
\usepackage{indentfirst}
\usepackage{mathrsfs}
\usepackage{multirow}
\usepackage{modroman}
\usepackage{natbib}
\usepackage{geometry}
 \geometry{
 a4paper,
 total={170mm,257mm},
 left=20mm,
 top=20mm,
 }
\usepackage{pdfpages}
\usepackage{setspace}
\usepackage{subcaption}
\usepackage{tabularx}
\usepackage{url}
\usepackage{xcolor}
\usepackage{xspace}

\newtheorem{theorem}{Theorem}
\newtheorem{lemma}{Lemma}
\newtheorem{prop}{Proposition}
\newtheorem{condition}{Condition}
\newtheorem{corollary}{Corollary}

\newtheorem{definition}{Definition}
\newtheorem{proposition}{Proposition}

\makeatletter
\newcommand{\distas}[1]{\mathbin{\overset{#1}{\kern\z@\sim}}}%
\newsavebox{\mybox}\newsavebox{\mysim}
\newcommand{\distras}[1]{%
  \savebox{\mybox}{\hbox{\kern3pt$\scriptstyle#1$\kern3pt}}%
  \savebox{\mysim}{\hbox{$\sim$}}%
  \mathbin{\overset{#1}{\kern\z@\resizebox{\wd\mybox}{\ht\mysim}{$\sim$}}}%
}
\makeatother

\newcommand{\tabitem}{~~\llap{\textbullet}~~}
\providecommand{\keywords}[1]
{
  \small	
  \textbf{\textit{Keywords:}} #1
}

\SetKw{KwMeta}{Meta-parameters:}

\numberwithin{equation}{section}
\numberwithin{theorem}{section}
\numberwithin{lemma}{section}
\numberwithin{prop}{section}
\numberwithin{condition}{section}
\numberwithin{corollary}{section}
\numberwithin{remark}{section}
\numberwithin{example}{section}
\numberwithin{definition}{section}

\title{Smoothing the Conditional Value-at-Risk based Pickands Estimators}
\author[b]{\normalsize Yizhou Li\thanks{Corresponding author at Department of Applied Mathematics and Statistics at Stony Brook University,
United States. E-mail address: \href{mailto:yizhou.li@stonybrook.edu}{\url{yizhou.li@stonybrook.edu}}}}
\author[a,b,c]{\normalsize Pawe\l\ Polak\thanks{E-mail address: \href{mailto:pawel.polak@stonybrook.edu}{\url{pawel.polak@stonybrook.edu}}}}

\affil[a]{\small \textit{Center of Excellence in Wireless and Information Technology, Stony Brook University, United States}}
\affil[b]{\small \textit{Department of Applied Mathematics and Statistics, Stony Brook University, United States}}
\affil[c]{\small \textit{Institute for Advanced Computational Science, Stony Brook University, United States}}

\bibliographystyle{abbrvnat}
\setcitestyle{authoryear,open={(},close={)}} 
\begin{document}
\maketitle

\begin{abstract}
We incorporate the conditional value-at-risk (CVaR) quantity into a generalized class of Pickands estimators. By introducing CVaR, the newly developed estimators not only retain the desirable properties of consistency, location, and scale invariance inherent to Pickands estimators, but also achieve a reduction in mean squared error (MSE). To address the issue of sensitivity to the choice of the number of top order statistics used for the estimation, and ensure robust estimation, which are crucial in practice, we first propose a beta measure, which is a modified beta density function, to smooth the estimator. Then, we develop an algorithm to approximate the asymptotic mean squared error (AMSE) and determine the optimal beta measure that minimizes AMSE.  A simulation study involving a wide range of distributions shows that our estimators have good and highly stable finite-sample performance and compare favorably with the other estimators.
\end{abstract}
\keywords{Pickands estimator; Extreme value index; Conditional Value-at-Risk; Smoothed estimator; Second-order regular variation; Asymptotic mean squared error}

\section{Introduction} \label{section:intro}
Suppose $X$ is a random variable (r.v.) with a distribution function (d.f.) $F$, and $X_{1}, X_{2}, \ldots, X_{n}$ is a sequence of independent samples of $X$. The extreme value d.f. with shape parameter $\gamma\in\mathbb{R}$ (shift parameter $0$ and scale parameter $1$) is defined by
\begin{align*}
    G_{\gamma}(x) = \begin{cases}
        \exp \left\{-(1+\gamma x)^{-1 / \gamma}\right\}, & \quad \text{for } \gamma\neq0,\ 1+\gamma x>0, \\
        \exp \left\{-e^{-x}\right\}, & \quad \text{for } \gamma=0,\ x\in\mathbb{R}.
    \end{cases}
\end{align*}
The d.f. $F$ belongs to the \textit{domain of attraction} of $G_\gamma$ for some $\gamma\in \mathbb{R}$, noted as $F\in\mathscr{D}(G_\gamma)$, if there exists a sequence of constants $a_n>0$ and $b_n \in \mathbb{R}$ such that
\begin{equation} \label{eqn: DoA}
    \lim _{n \rightarrow \infty} P\left\{\max \left(X_{1}, \ldots, X_{n}\right) \leqslant a_{n}x + b_{n}\right\}=\lim _{n \rightarrow \infty} F^{n}\left(a_{n} x+b_{n}\right)=G_{\gamma}(x).
\end{equation}
This $\gamma$ is called the \textit{extreme value index} of $F$.

An alternative characterization of $\gamma$ in extreme value theory can be provided using the generalized Pareto distribution (GPD), which has d.f.
\begin{align*}
    H_\gamma(x)=\begin{cases}
        1-(1+\gamma x)_+^{-1 / \gamma}, & \quad \text{for }\gamma\neq0, \ x\geq0, \\
        1 - e^{-x}, & \quad \text{for } \gamma=0, \ x\geq0,
    \end{cases}
\end{align*}
where $z_+=\max{(z,0)}$. Note that $H_\gamma$ with $\gamma>0$ yields a Pareto distribution; $H_0$ coincides with the exponential distribution. Additionally, $H_\gamma$ with $\gamma<0$ exhibits a finite right endpoint, particularly with $\gamma=-1$ resulting in a uniform distribution on the interval $(0,1)$.

Let $x_F=\sup\{x\in\mathbb{R}:F(x)<1\}$ denote the right endpoint of a d.f. $F$. For $u\in\mathbb{R}$ such that $F(u)<1$, let the excess d.f. denoted by $F_u(x)=P(X-u\leq x|X>u)$ for $x\geq0$. \cite{balkema1974residual} and \cite{pickands1975statistical} show that $F\in\mathscr{D}(G_\gamma)$ if and only if for $u<x_F$ there exists a positive, measurable scaling function $\sigma(u)$, such that
\begin{equation} \label{eqn: PoT}
    \lim_{u\to x_F} \sup_{x\geq0} \left|F_u(x) - H_\gamma\left(\frac{x}{\sigma(u)}\right)\right| = 0.
\end{equation}
Formula (\ref{eqn: PoT}) suggests modeling the unknown excess d.f. $F_u$ in the case of $F\in\mathscr{D}(G_\gamma)$ by the parametric family $H_\gamma(\cdot/\sigma)$, where $\gamma\in\mathbb{R}$ and $\sigma>0$. Consequently, the upper tail of $F$ can be approximated by a GPD. Moreover, if $F\in\mathscr{D}(G_\gamma)$ with $\gamma>0$, the function $\sigma(u)$ in \eqref{eqn: PoT} can be selected as $\sigma(u)=\gamma u$, and then $H_\gamma(x/(\gamma u)) = 1 - (u/(x+u))^{1/\gamma}$ for $x\geq0$ which is a Pareto distribution (see for example Lemma 4, \citealp{makarov2007applications}). 

Hence, understanding the extreme value index $\gamma$ is crucial for modeling maxima and estimating extreme quantiles. The estimation of $\gamma$, besides high quantile estimation, is one of the most crucial problems in univariate extreme value theory.

Compared with the \cite{hill1975simple} estimator and the probability-weighted moment estimator proposed by \cite{hosking1985estimation}, the primary benefits of the Pickands estimator by \cite{pickands1975statistical} include the invariant property under location and scale shift, and the consistency for any $\gamma \in \mathbb{R}$ under intermediate sequence such that $m_{n} \to \infty$, $m_{n} / n \to 0$ as $n\to\infty$. Let $X_1^{(n)},X_2^{(n)},\cdots,X_n^{(n)}$ denote the descending order statistics of $X_1,\ldots,X_n$. \cite{yun2002generalized} generalized the Pickands estimator by
\begin{equation} \label{eqn: yun}
    \widehat{\gamma}_{n, m}^{Y}(u, v):=\frac{1}{\log v} \log \frac{X_{m}^{(n)}-X_{[u m]}^{(n)}}{X_{[v m]}^{(n)}-X_{[u v m]}^{(n)}}, \quad u, v>0, u, v \neq 1,
\end{equation}
where $1 \leqslant m,[u m],[v m],[u v m] \leqslant n$, $[x]$ denotes the integer part of $x \in \mathbb{R}$, and $\lfloor x \rfloor$ is the floor of a real number $x$. \cite{teng2021thesis} introduced the Conditional Value at Risk (CVaR) order statistic, also known as the empirical super-quantile, into the Pickands estimator by replacing the descending order statistics in \eqref{eqn: yun} with the empirical descending CVaR order statistics $Y_{1}^{(n)} \geqslant Y_{2}^{(n)} \geqslant \cdots \geqslant Y_{n}^{(n)}$ by
\begin{equation} \label{eqn: cvar order statistics}
    Y_{k}^{(n)}:= \frac{1}{k} \sum_{i = 1}^{k} X_{i}^{(n)}, \quad k = 1,\ldots,n.
\end{equation}
From \cite{li2024asymptotic}, the asymptotic behavior of such estimator demonstrates a substantial reduction in asymptotic variance due to the equal-weighted averaging of CVaR \citep{rockafellar2002conditional} when the underlying distribution tail is not too heavy, i.e., $\gamma<1/2$. 

In practice, an estimator would be limited to be employed due to its sensitivity to the choice of the intermediate order statistics $\{X_{i}^{(n)}\}_{i=1}^m$. Such sensitivity is twofold. First, the inclusion of one single large-order statistic in estimation, i.e., incrementing $m$ by 1 for an estimator can considerably alter the estimated value. By incorporating CVaR order statistics, such an issue can be substantially mitigated. There are also many well-known estimators proposed to fix the problem and we denote them as \textquotedblleft smoothed estimators\textquotedblright such as the negative Hill estimator by \cite{falk1995some}, the smoothed Hill estimator by \cite{resnick1997smoothing}, the smoothed moment estimator by \cite{resnick1999smoothing}, the weighted least squares estimator by \cite{husler2006weighted}, kernel smoothed estimators by \cite{csorgo1985kernel} and \cite{groeneboom2003kernel}, the weighted mixture of Pickands estimators by \cite{drees1995refined}, the generalized Pickands estimator by \cite{segers2005generalized} who realized the Pickands estimator is essentially a linear combination of log-spacings of order statistics, and many more. 

Second, the performance of an estimator dramatically changes across different $m$. The number $m$ used in the implementation of an estimator depends strongly on the tail itself and the choice of this number is clearly a question of trade-off between bias and variance: as $m$ increases, the bias will grow because the tail satisfies less the convergence criterion, while if less data are used, the variance increases. This is why a plot of MSE of an estimator versus varying $m$ usually looks like a `U' shape. It is therefore suggested that the optimal value of $m$ should coincide with the value that minimizes the MSE. 

To address the aforementioned problem, we first introduce the CVaR-based smoothed estimator into the generalized class of the Pickands estimator by \cite{segers2005generalized}. The mixture of log-spacings follows weights based on a measure in a certain space satisfying several integral conditions. In addition, we propose a beta measure which is a modified beta density function that satisfies the integral conditions. Unless stated otherwise, all beta measures discussed in this paper refer to modified beta density according to these conditions. Such a structure provides flexibility to control bias and variance. One might aim to minimize the asymptotic mean squared error (AMSE) with respect to the beta measure. However, obtaining the exact value of AMSE is challenging, as it requires knowledge or a reliable estimate of a term from the second-order regular variation condition (see Sections \ref{sec: regular variation} and \ref{sec: measure selection}). Therefore, we propose a heuristic algorithm to approximate AMSE using a bootstrapping approach, allowing the optimal beta measure to be computed numerically. For numerical convenience, we also introduce an alternative objective called the regularized measure squared error (RegMSE), which is heuristically designed to be optimized with respect to beta measures. Minimizing both objectives ensures that our estimators maintain a low MSE while exhibiting a very stable (almost flat) performance across varying $m$. This stability makes the task of choosing $m$ very trivial in practice (see Section \ref{sec: simulations}).

The rest of the paper is organized as follows. In Section \ref{section:smooth}, we introduce the CVaR-based generalized class of Pickands estimator. Section \ref{sec: large sample} exhibits large sample properties of our CVaR-based smoothed estimator standing on needed conditions including weak consistency and asymptotic behavior. In Section \ref{sec: measure selection}, we show how to obtain optimal measures in the estimator based on asymptotic behavior. Simulation studies and conclusions are given in Section \ref{sec: simulations} and Section \ref{sec: conclusion}, respectively.

\section{Smoothing the Pickands Estimators} \label{section:smooth}
The concept of forming linear combinations of log-spacings of order statistics serves as the foundation for a broad family of extreme value index estimators. First, define $\Lambda$ as the collection of signed Borel measures $\lambda$ on the interval $(0,1]$ such that
\begin{equation} \label{eqn: space}
    \lambda(0,1]=0,\quad \int\log(1/t)|\lambda|(dt)<\infty,\quad \text{and} \quad \int\log(1/t)\lambda(dt)=1.
\end{equation}
If not mentioned explicitly, the domain of integration is always understood to be $(0,1]$. For $t\in[0,1]$, let $\lambda(t)=\lambda((0,t])$. Denote by $\lceil x \rceil$ the smallest integer at least as large as $x\in\mathbb{R}$, and, for convenience, set $\log(0):=0$. Then, the generalized Pickands estimator by \cite{segers2005generalized} is given by
\[
    \widehat{\gamma}_{n,m}(c,\lambda) = \int \log(X_{\lfloor c\lceil tm \rceil\rfloor}^{(n)} - X_{\lceil tm \rceil}^{(n)})\lambda(dt),
\]
where $m=1,\dots,n-1,\ 0<c<1$ and $\lambda\in\Lambda$. This family encompasses and refines the Pickands estimator and all its variants. In Section \ref{sec: simulations}, we verify in simulations that this estimator has a stable behavior as a function of $m$. Thus, we extend the CVaR order statistics to the same framework and propose the CVaR-based smoothed estimator. \\

\begin{definition}
For $m=1,\dots,n-1,\ 0<c<1$, and $\lambda\in\Lambda$, the CVaR-based smoothed estimator $\widehat{\gamma}_{n,m}(c,\lambda)$ is defined as
\[
    \widehat{\gamma}_{n,m}(c,\lambda) = \int \log(Y_{\lfloor c\lceil tm \rceil\rfloor}^{(n)} - Y_{\lceil tm \rceil}^{(n)})\lambda(dt).
\]
\end{definition}
\noindent The CVaR-based smoothed estimator is a linear combination of log-spacings of CVaR order statistics. The spacings are controlled by $c$, $m$, and the weights by $\lambda\in\Lambda$. In particular, let $j=\lceil tm \rceil$ and when $\frac{j-1}{m}<t\leq\frac{j}{m}$, we have
\begin{equation} \label{eqn: estimator}
    \widehat{\gamma}_{n,m}(c,\lambda) = \sum_{j=1}^m \{\lambda(\frac{j}{m})-\lambda(\frac{j-1}{m})\}\log(Y_{\lfloor c\lceil tm \rceil\rfloor}^{(n)} - Y_{\lceil tm \rceil}^{(n)}).
\end{equation}
Note that the CVaR-based Pickands estimator is a special case of \eqref{eqn: estimator}. For $0<c<1,\ 0<v<1$, and define $\lambda^{(v)}:=(\epsilon_1-\epsilon_v)/\log v$ where $\epsilon_x$ is the point-measure giving mass 1 to $x$, the CVaR-based Pickands estimator is recovered when assigning $c=u$ and $t=v$.

\section{Large Sample Properties}
\label{sec: large sample}
\subsection{Weak Consistency} \label{sec: weak consistency}
Let `$\xrightarrow{p}$' denote convergence in probability. A sequence of positive integers $m=m(n)$ is called an intermediate sequence if $1\leq m \leq n-1$, $m \to \infty$ and $m / n \to 0$ as $n \to \infty$.
\begin{theorem} \label{thm: weak consistency}
    Let $F\in\mathscr{D}(G_\gamma)$ with $\gamma<1$, $0<c<1$, and $\lambda\in\Lambda$. For every intermediate sequence $m$ we have $\widehat{\gamma}_{n,m}(c,\lambda) \xrightarrow{p} \gamma$ as $n\to\infty$.
\end{theorem}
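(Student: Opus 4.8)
The plan is to reduce the claim to the asymptotics of a single log-spacing and then exploit the three defining properties of $\lambda\in\Lambda$ in \eqref{eqn: space}. Writing $j=\lceil tm\rceil$, the estimator is the Riemann--Stieltjes sum \eqref{eqn: estimator}, so it suffices to describe $\log\big(Y_{\lfloor cj\rfloor}^{(n)}-Y_{j}^{(n)}\big)$ as a function of $t$ and then to pass to the limit inside the integral against $\lambda$. The guiding principle is that, after centring and scaling, this log-spacing splits into a piece that is constant in $t$ (which $\lambda$ annihilates since $\lambda(0,1]=0$) plus a piece proportional to $\log(1/t)$ (which $\lambda$ integrates to $1$), leaving exactly $\gamma$.

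First I would establish the key CVaR-spacing limit. Using the R\'enyi/quantile representation $Y_{k}^{(n)}\stackrel{d}{=}\frac1k\sum_{i=1}^{k}U(\Gamma_{n+1}/\Gamma_i)$, where $U(s)=F^{\leftarrow}(1-1/s)$ and $\Gamma_i$ are the partial sums of i.i.d.\ standard exponentials, I would show that for each fixed $t\in(0,1]$,
\[
    \frac{Y_{\lfloor c\lceil tm\rceil\rfloor}^{(n)}-Y_{\lceil tm\rceil}^{(n)}}{a(n/m)} \xrightarrow{p} D_\gamma(c)\,t^{-\gamma},\qquad D_\gamma(c):=\frac{c^{-\gamma}-1}{\gamma(1-\gamma)}>0,
\]
with $a$ the auxiliary scale function of the extreme-value condition $F\in\mathscr{D}(G_\gamma)$ (and $D_0(c)=\log(1/c)$ read off by continuity). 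The mechanism, which parallels the CVaR asymptotics in \cite{li2024asymptotic}, is that $\frac1k\sum_{i=1}^{k}U(\Gamma_{n+1}/\Gamma_i)$ concentrates around $\int_0^1 U\big((n/k)/r\big)\,dr=U(n/k)+\tfrac{a(n/k)}{1-\gamma}+o(a(n/k))$, in which the averaging integral $\int_0^1\frac{r^{-\gamma}-1}{\gamma}\,dr=\frac{1}{1-\gamma}$ converges \emph{precisely} because $\gamma<1$; differencing the two CVaR levels and using $a((n/m)/t)\sim t^{-\gamma}a(n/m)$ gives the displayed limit.

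Taking logarithms then yields, for each fixed $t$,
\[
    \log\big(Y_{\lfloor c\lceil tm\rceil\rfloor}^{(n)}-Y_{\lceil tm\rceil}^{(n)}\big)=\log a(n/m)+\gamma\log(1/t)+\log D_\gamma(c)+o_p(1).
\]
Integrating against $\lambda$ and invoking \eqref{eqn: space}, the $t$-free terms $\log a(n/m)$ and $\log D_\gamma(c)$ are killed by $\lambda(0,1]=0$, while $\gamma\int\log(1/t)\,\lambda(dt)=\gamma$, so $\widehat\gamma_{n,m}(c,\lambda)\xrightarrow{p}\gamma$. Equivalently, in the discrete form \eqref{eqn: estimator} the weights $\lambda(j/m)-\lambda((j-1)/m)$ sum to $\lambda(0,1]=0$, and $\sum_{j}\{\lambda(j/m)-\lambda((j-1)/m)\}\log(m/j)$ is the Riemann--Stieltjes approximation of $\int\log(1/t)\,\lambda(dt)=1$.

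The main obstacle is uniformity in $t$ over $(0,1]$, needed to interchange the limit with the integral, i.e.\ to control the Riemann--Stieltjes sum of the $o_p(1)$ remainders. The delicate region is $t\to0$, that is, small indices $j=\lceil tm\rceil$, where the spacings involve the most extreme and noisiest order statistics; there I would dominate the remainder by uniform (Potter-type) inequalities for regular variation combined with the integrability condition $\int\log(1/t)\,|\lambda|(dt)<\infty$, which is exactly strong enough to majorise both the $\gamma\log(1/t)$ growth and the attendant error. The genuinely new ingredient relative to the non-CVaR generalized Pickands estimator of \cite{segers2005generalized} is the concentration of the CVaR average $Y_{k}^{(n)}$ around $\int_0^1 U((n/k)/r)\,dr$: one must show that the handful of most extreme summands do not dominate the empirical average, which holds if and only if $\gamma<1$ and is where that hypothesis is indispensable.
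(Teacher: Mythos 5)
Your proposal is correct in substance and, at its core, follows the same skeleton as the paper's argument: write the log-spacing as $\log a(n/m)+\gamma\log(1/t)+\log\tilde h_\gamma(c)+o(1)$, let $\lambda(0,1]=0$ annihilate the $t$-free terms, let $\int\log(1/t)\,\lambda(dt)=1$ produce $\gamma$, and justify interchanging limit and integral by Potter-type envelopes of the form $A_1+A_2\log(1/t)$ together with $\int\log(1/t)\,|\lambda|(dt)<\infty$ --- this is exactly the content of the Proposition in Appendix \ref{appendix: weak consistency}, where your constant $D_\gamma(c)$ is precisely $\tilde h_\gamma(c)$ from \eqref{eqn:1st_order_ERV'_V}. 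Where you genuinely diverge is in how the randomness of the CVaR order statistics is handled. The paper proves no concentration of the average $\frac1k\sum_{i=1}^k X_i^{(n)}$ at all: it invokes the Reiss-type variational-distance approximation \eqref{eqn: Reiss approx2}, which replaces the whole vector $(Y_j^{(n)})_{j=0}^m$ by $(V(S_{j+1}/n))_{j=0}^m$ at cost $O(m/n)$, and thereby reduces the theorem to a purely \emph{deterministic} limit statement about arbitrary sequences $s_j$ with $s_j/j\to1$, applied wp1 to $S_j$ via the strong law. Your route instead re-derives the content of that approximation by hand, through the R\'enyi representation $Y_k^{(n)}\stackrel{d}{=}\frac1k\sum_{i=1}^k U(\Gamma_{n+1}/\Gamma_i)$ and an LLN-type concentration around $V(k/n)$, with the correct diagnosis that the top summands are negligible relative to $a(n/k)$ precisely when $\gamma<1$ (for $\gamma>0$, $U(n)/(k\,a(n/k))\asymp k^{\gamma-1}$). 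The trade-off: the paper's single variational-distance step makes all subsequent uniformity issues non-stochastic, so dominated convergence is applied along a fixed realization; your pointwise-in-$t$ $o_p(1)$ statements still need to be upgraded to a domination holding jointly over all $j=\lceil tm\rceil$ (your concentration lemma must be uniform in $k\le m$ after weighting by the integrable envelope), which you flag as the main obstacle but do not carry out, and you should also note the boundary convention for $\lfloor c\lceil tm\rceil\rfloor=0$ that the paper absorbs via the $+1$ index shift and the $\log(0):=0$ convention. Neither point is a fatal gap: your version is more self-contained but technically heavier, while the paper's is shorter because the approximation step is cited (Theorem 5.4.3 of \citealp{reiss2012approximate}) rather than proved.
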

\subsection{The CVaR-based Regular Variation} \label{sec: regular variation}
The tail quantile function $U$ of a d.f. $F$ is given by
\[
    U(x)=\begin{cases}
        F^{-1}(1-1/x), \quad x>1, \\
        0, \quad 0<x\leq1.
    \end{cases}
\]
The condition $F\in\mathscr{D}(G_\gamma)$ is equivalent to the existence of a positive, measurable function $a(t)$ such that
\begin{equation} \label{eqn: 1st_order_ERV_U}
    \lim_{t\to\infty} \frac{U(\frac{t}{y})-U(t)}{a(t)}=\frac{y^{-\gamma}-1}{\gamma} := h_{\gamma}(y), \quad y>0,
\end{equation}
where $h_{\gamma}(y)$ has to be read as $\log{\frac{1}{y}}$ in case $\gamma=0$. Moreover, (\ref{eqn: DoA}) holds with $b_n:=U(n)$ and $a_n:=a(n)$ (see Lemma 1 in \citealp{de1984slow}, and Theorem 1.1.6 in \citealp{de2006extreme}). The auxiliary function $a$ in (\ref{eqn: 1st_order_ERV_U}) is regularly varying with index $\gamma$, denoted as $a\in RV_\gamma$, in other words, 
\begin{equation}\label{eqn: 1st_order_RV}
    \lim_{t\to\infty} \frac{a(\frac{t}{y})}{a(t)}=y^{-\gamma},\quad y>0.
\end{equation}
If $a\in RV_0$, $a$ is called a \textit{slowly varying} function. Both \eqref{eqn: 1st_order_ERV_U} and \eqref{eqn: 1st_order_RV} hold locally uniformly in $0<y<\infty$ \citep[][Thereom $1.5.2$]{bingham1989regular}.

Suppose $F$ has a finite mean and the tail super-quantile function $V$ of $F$ is given by
\begin{equation} \label{eqn:V}
    V(x) = \frac{1}{x} \int_{0}^{x} F^{-1}( 1- s) ds = \frac{1}{x} \int_{0}^{x} U(\frac{1}{s}) ds, \quad 0<x<1.
\end{equation}
By \cite{teng2021thesis}, the condition $F\in\mathscr{D}(G_\gamma)$ implies that there exists a positive, measurable function $a(t)$ from \eqref{eqn: 1st_order_ERV_U} such that
\begin{equation} \label{eqn:1st_order_ERV'_V}
\lim_{t\to\infty} \frac{V(\frac{y}{t})-V(\frac{1}{t})}{a(t)} = \frac{1}{y} \int_0^y h_\gamma(w) dw - \int_0^1 h_\gamma(w) dw := \tilde{h}_\gamma(y) \quad \text { for } y>0,
\end{equation}
where $\tilde{h}_\gamma(y) = \frac{y^{-\gamma}-1}{\gamma(1-\gamma)}$ for $\gamma<1$ and $\tilde{h}_\gamma(y) = -log(y)$ in case $\gamma=0$, and the convergence hold locally uniformly in $0<y<\infty$.

The second-order regular variation condition is needed to establish the asymptotic normality of our estimator.
\begin{condition} \label{cond1}
 For some $t_0>1$ the tail quantile function $U$ is absolutely continuous on $[t_0,\infty)$ with density $u$. There are $\gamma\in\mathbb{R}, \ \rho\leq0, d\in\mathbb{R}$, and $A\in RV_{\rho}$ with $\lim_{t\to\infty}A(t)=0$ such that, denoting $a(t)=t u(t)$, we have
\begin{equation} \label{eqn:cond_alternative}
    \lim_{t\to\infty} \frac{\log a(\frac{t}{y}) - \log a(t) - \gamma\log\frac{1}{y}}{A(t)} = dh_\rho(y), \quad \text{for } y>0,
\end{equation}
and $a(t)$ is regularly varying with index $\gamma$.
\end{condition}

Define that
\begin{equation} \label{eqn:h}
h(y;t) = \frac{U(\frac{t}{y}) - U(t)}{a(t)} \quad \text{for }y>0, \ t\geq t_0,
\end{equation}
\begin{equation} \label{eqn:H}
H_{\gamma,\rho}(y) = \int_y^1 w^{-1-\gamma}h_\rho(w) dw = \frac{1}{\rho}\biggl(\frac{y^{-(\rho+\gamma)}-1}{\rho+\gamma} - \frac{y^{-\gamma}-1}{\gamma}\biggr), \quad \text{for }y>0,
\end{equation}
with the appropriate limits in case $\gamma=0,\ \rho=0$, or $\gamma+\rho=0$. Note that \eqref{eqn: 1st_order_ERV_U} can be reformulated as $\lim_{t\to\infty}h(y;t)=h_\gamma(y)$. Condition \ref{cond1} imposes a rate of convergence in \eqref{eqn: 1st_order_ERV_U} and implies
\begin{equation} \label{eqn:2nd_order_ERV}
    \lim_{t\to\infty} \frac{h(y;t)-h_\gamma(y)}{A(t)} = dH_{\gamma,\rho}(y)
\end{equation}
\noindent which is consistent with the framework of second-order generalized variation (see \citealp{de1996generalized} and Section 2.3, \citealp{de2006extreme}).

\begin{condition}
\label{cond2}
Let $A$ and $d$ be as in condition \ref{cond1}. The intermediate sequence $m=m(n)$ satisfies 
\begin{itemize}
    \item case $d\neq0$: $\lim_{n\to\infty} \sqrt{m} A(\frac{n}{m})$ exists;
    \item case $d=0$: $\sup_{n\geq2} \sqrt{m} A(\frac{n}{m}) < \infty$.
\end{itemize}
In both cases, we denote $r:=\lim_{n\to\infty} d \sqrt{m} A(\frac{n}{m})$.
\end{condition}

The second-order regular variation limit function for the tail super-quantile function $V$ is similar. Define that
\begin{equation} \label{eqn:h_tilde}
\tilde{h}_(y;t) := \frac{1}{y}\int_0^y h(w;t) dw - \int_0^1  h(w;t) dw = \frac{V(\frac{y}{t}) - V(\frac{1}{t})}{a(t)}, \quad y>0 \text{ and } t\geq t_0.
\end{equation}
By \cite{li2024asymptotic}, if \eqref{eqn:2nd_order_ERV} holds with $\gamma<1$, it can be implied that
\begin{equation}
\label{eqn:2nd_order_ERV_V}
    \lim_{t\to\infty} \frac{\tilde{h}_(y;t) - \tilde{h}_\gamma(y)}{A(t)}=d\tilde{H}_{\gamma,\rho}(y), \quad y>0,
\end{equation}
where
\begin{equation} \label{eqn:H_tilde}
    \tilde{H}_{\gamma,\rho}(y):=\frac{1}{y}\int_{0}^{y}H_{\gamma,\rho}(x)dx - \int_{0}^{1}H_{\gamma,\rho}(x)dx.
\end{equation}

\subsection{Asymptotic Normality} \label{sec: Asymptotic Normality of smoothed estimator}
First, we introduce a useful approximation to the joint distribution of the CVaR order statistics of a sample. Let $\xi_i,\ i\geq1$, be independent standard exponential r.v. with partial sums
\begin{equation} \label{eqn: expo_sum}
    S_j = \sum_{i=1}^j \xi_i, \quad \text{for } j=1,2,\dots.
\end{equation}
Then, by Theorem 5.4.3, \cite{reiss2012approximate}, for any intermediate sequence $m_n$, we have the asymptotic equivalence (in distribution) that 
\begin{equation} \label{eqn: Reiss approx2}
    \Big\| \mathcal{L}\bigl((Y_j^{(n)})_{j=0}^m \bigr) - \mathcal{L}\left(\left(V(S_{j+1}/n)\right)_{j=0}^m \right) \Big\| = O(m/n), \quad n\to\infty,
\end{equation}
where $\mathcal{L}(\cdot)$ denotes the law of a random vector and $\|\cdot-\cdot\|$ is the variational distance of two distributions.

For $0<c<1$ and $\lambda\in\Lambda$, let
\begin{equation}
    \tilde{\gamma}_{n,m}^G(c,\lambda)=\int\log\{V(S_{\lfloor cj \rfloor+1} / n) - V(S_{j+1} / n)\} \lambda(dt),
\end{equation}
with $j=\lceil tm \rceil$. For convenience, abbreviate `with probability 1' to `wp1'. For $0<c<1$, we define the difference operator $\Delta_c$ on functions $f:(0,1]\rightarrow\mathbb{R}$ as follows,
\begin{equation} \label{eqn:delta_c}
    \Delta_c f(t):=\frac{1}{c}\int_0^c f(ut) du - \int_0^1 f(ut) du, \quad \text{for} \ 0<t\leq1.
\end{equation}
\begin{theorem} \label{thm: smooth_tilde_convergence}
Assume Conditions \ref{eqn: 1st_order_ERV_U} and \ref{cond2} hold with $\gamma<1/2$. Let $\Lambda^{\prime} \subset \Lambda$ be such that for some $0<\epsilon<1/2$ the integral $\int t^{1/2-\epsilon}|\lambda|(dt)$ is uniformly bounded over $\Lambda^{\prime}$. Let $0<c_0\leq c_n<1$ be such that $m_n^\eta (1-c_n)\rightarrow\infty$ for some $0<\eta<\epsilon/(2+4\epsilon)$. On a suitable probability space, there exists r.v. $\{S_j,j\geq1\}$ as in (\ref{eqn: expo_sum}) and a standard Wiener process $W$ such that wp1
\[
    \sup_{c0\leq c\leq c_n, \ \lambda\in\Lambda^{\prime}} |k_n^{1/2}\{\tilde{\gamma}_{n,m_n}^G(c,\lambda)-\gamma\} - Z_n(c,\gamma,\lambda) - rB(c,\gamma,\rho,\lambda)|\rightarrow 0,
\]
as $n\to\infty$, where
\[
    Z_n(c,\gamma,\lambda)=\int \frac{t^\gamma \Delta_c \{t^{-\gamma-1}W_n(t)\}}{\tilde{h}_\gamma(c)} \lambda(dt),
\]
\[
    B(c,\gamma,\rho,\lambda)=\int \frac{t^\gamma \Delta_c \{H_{\gamma,\rho}(t)\}}{\tilde{h}_\gamma(c)} \lambda(dt).
\]
and $W_n(t)=-m_n^{-1/2}W(m_n t)$ is a standard Wiener process as well.
\end{theorem}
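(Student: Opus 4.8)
The plan is to prove this as a uniform strong approximation for the functional $\tilde\gamma_{n,m_n}^G(c,\lambda)$, following the tail-quantile-process methodology of \citet{segers2005generalized} and \citet{li2024asymptotic}, but carried out for the super-quantile function $V$ rather than the quantile function $U$. Writing $j=\lceil tm\rceil$ and setting $t_*=n/S_{j+1}$, I would first use the identity in \eqref{eqn:h_tilde} to rewrite each log-spacing as
\[
\log\{V(S_{\lfloor cj\rfloor+1}/n)-V(S_{j+1}/n)\}=\log a(t_*)+\log\tilde h\!\left(\tfrac{S_{\lfloor cj\rfloor+1}}{S_{j+1}};t_*\right),
\]
so that the estimator splits into a scale term $\int\log a(n/S_{j+1})\,\lambda(dt)$ and a shape term $\int\log\tilde h(\cdots)\,\lambda(dt)$. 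Because $\lambda(0,1]=0$ annihilates any additive constant and $\int\log(1/t)\lambda(dt)=1$ picks out the index of regular variation, the expansion of $\log a$ supplied by Condition~\ref{cond1} reproduces $\gamma$ to first order and feeds the $H_{\gamma,\rho}$ part of the bias; the deterministic limit of the shape term is $\log\tilde h_\gamma(c)$, a quantity constant in $t$ that integrates to zero. This is the step that makes the centering at $\gamma$ transparent, since $S_{j+1}/n\to 0$ and $S_{\lfloor cj\rfloor+1}/S_{j+1}\to c$ in probability.

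Next I would install the probability space. By a KMT-type embedding for the partial sums \eqref{eqn: expo_sum}, there is a Wiener process $W$ with $S_j-j$ strongly approximated by $W(j)$ uniformly up to logarithmic error; rescaling produces the process $W_n(t)=-m_n^{-1/2}W(m_nt)$ of the statement. The two random arguments $S_{j+1}/n$ and $S_{\lfloor cj\rfloor+1}/S_{j+1}$ are then linearized around their means $tm_n/n$ and $c$ by a Taylor expansion of $\log\tilde h$, with the $S_j$-fluctuations replaced by increments of $W_n$. The crucial algebraic point is that $V$ is an \emph{average} of $U$ (see \eqref{eqn:V}), so the perturbation of the super-quantile difference is itself an average of the perturbation of $U$; this is exactly what converts the bare fluctuation $t^{-\gamma-1}W_n(t)$ into $\Delta_c\{t^{-\gamma-1}W_n(t)\}$, with the operator $\Delta_c$ of \eqref{eqn:delta_c} and the denominator $\tilde h_\gamma(c)$ emerging from differentiating $\log\tilde h$. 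Collecting the leading terms identifies the first-order stochastic part as $Z_n(c,\gamma,\lambda)+o(1)$.

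For the bias I would scale the second-order remainder by $k_n^{1/2}$. The deviation of $\tilde h(\cdot;t_*)$ from $\tilde h_\gamma(\cdot)$ is, by \eqref{eqn:2nd_order_ERV_V}, of order $A(t_*)$ with limit shape $d\tilde H_{\gamma,\rho}$, while the $\log a$ term contributes the $H_{\gamma,\rho}$ piece through \eqref{eqn:cond_alternative}. Condition~\ref{cond2} gives $d\,k_n^{1/2}A(n/m_n)\to r$, so that combining the two contributions and applying $\Delta_c$ in the same fashion as for $Z_n$ yields $rB(c,\gamma,\rho,\lambda)$; the representation \eqref{eqn:H_tilde} guarantees that these pieces assemble into a single kernel built from $H_{\gamma,\rho}$.

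The main obstacle is uniformity over $c\le c_n$ and $\lambda\in\Lambda'$, and this is where the technical hypotheses earn their keep. As $c\uparrow1$ the normalizer $\tilde h_\gamma(c)\downarrow0$, so the growth restriction $m_n^\eta(1-c_n)\to\infty$ is needed to keep the remainder from the $\log$-linearization negligible relative to this vanishing denominator; near $t=0$ the Wiener approximation and the attendant moment bounds are worst, and the hypothesis that $\int t^{1/2-\epsilon}|\lambda|(dt)$ is bounded over $\Lambda'$ is precisely what renders the error integrals uniformly convergent there. I would handle the randomness of the discrete index $\lfloor cj\rfloor$ against the deterministic target $c$ by a monotonicity and coupling argument, and control the $t\to0$ contribution through maximal inequalities for $W$ together with the restriction $\gamma<1/2$, which guarantees that the super-quantile fluctuations possess the finite second moments on which the whole averaging argument rests. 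Verifying that every one of these remainder terms is $o(1)$ \emph{simultaneously} in $c$ and $\lambda$, rather than merely pointwise, is the heart of the proof.
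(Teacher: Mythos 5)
Your plan is correct and takes essentially the same route as the paper's proof: the paper likewise reduces the theorem to a strong approximation of the integrand (its Theorem \ref{thm: integrand}), splitting each log-spacing into a scale part $\log a(n/S_{j+1})$ and a shape part $\log\tilde h\bigl(S_{\lfloor cj\rfloor+1}/S_{j+1};n/S_{j+1}\bigr)$ (refined there into four terms $\text{I}_n$--$\text{IV}_n$), invoking the Hungarian/KMT construction of \cite{segers2005generalized} to produce $W_n$, extracting the bias from \eqref{eqn:cond_alternative} and \eqref{eqn:2nd_order_ERV_V} under Condition \ref{cond2}, proving weighted uniform bounds (weights $t^{\epsilon}$ and $t^{1/2+\epsilon}$) over $c_0\leq c\leq c_n$ that pair with the moment condition on $\Lambda'$, and finally merging the two bias contributions $h_\rho(t)$ and $t^{-\rho}\tilde H_{\gamma,\rho}(c)/\tilde h_\gamma(c)$ into $rB(c,\gamma,\rho,\lambda)$ via the functional equation for $H_{\gamma,\rho}$, exactly as you sketch. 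The only slight imprecision is your remark that the $\log a$ term ``feeds the $H_{\gamma,\rho}$ part'' of the bias: in the paper it contributes the $rh_\rho(t)$ piece (Proposition \ref{prop:part1}), and it is the functional identity for $H_{\gamma,\rho}$ at the end of the proof of Theorem \ref{thm: integrand} that assembles this with the shape-term contribution into the single $\Delta_c\{H_{\gamma,\rho}(t)\}$ kernel.
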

Observe that for a standard Wiener process $W$, with $0<c<1,\gamma<1/2$, and a measure $\lambda\in\Lambda$ such that $\int t^{-1/2-\epsilon} < \infty$ for some $\epsilon>0$, the r.v.
\[
    Z(c,\gamma,\lambda)=\int \frac{t^\gamma \Delta_c \{t^{-\gamma-1}W(t)\}}{\tilde{h}_\gamma(c)} \lambda(dt)
\]
is mean-zero normally distributed with variance
\begin{equation} \label{eqn: asymptotic variance}
    v(c,\gamma,\lambda)=\int\int \sigma_{c,\gamma}(s,t)\lambda(ds)\lambda(dt),
\end{equation}
where
\begin{equation} \label{eqn:sigma}
    \sigma_{c,\gamma}(s,t)=E\Bigl[\frac{s^\gamma \Delta_c \{s^{-\gamma-1}W(s)\}}{\tilde{h}_\gamma(c)}\frac{t^\gamma \Delta_c \{t^{-\gamma-1}W(t)\}}{\tilde{h}_\gamma(c)}\Bigr].
\end{equation}
If we define $B_s(t)=\frac{1}{t}\int_0^t u^{-\gamma-1}W(us)du$, then
\[
    \sigma_{c,\gamma}(s,t)=\frac{1}{st\tilde{h}_\gamma^2(c)}\left[E\left(B_s(c)B_t(c)\right) -E\left(B_s(1)B_t(c)\right)-E\left(B_s(c)B_t(1)\right)+E\left(B_s(1)B_t(1)\right) \right],
\]
where
\[
    E\left(B_s(t_1)B_t(t_2)\right)=\begin{cases}
        (st)^\gamma(t_1t_2)^{-1}\left[h_1(\gamma)(st_1\land tt_2)^{1-2\gamma} - h_2(\gamma)(st_1\land tt_2)^{1-\gamma}(st_1\lor tt_2)^{-\gamma} \right], \quad \gamma\neq0, \\ \\
        (t_1t_2)^{-1} (st_1\land tt_2) \left[2 - \log{(st_1\land tt_2)} + \log{(st_1\lor tt_2)} \right], \quad \gamma=0,
    \end{cases}
\]
$h_1(\gamma)=\frac{1}{\gamma(1-\gamma)(1-2\gamma)}$, and $h_2(\gamma)=\frac{1}{\gamma(1-\gamma)}$.
Theorem \ref{thm: smooth_tilde_convergence} and approximation \eqref{eqn: Reiss approx2} together imply the asymptotic normality of the CVaR-based smoothed estimators.
\begin{corollary} \label{thm: asymptotic normality of smoothed estimator}
Assume Conditions \ref{cond1} and \ref{cond2} hold with $\gamma<1/2$. If $0<c<1$ and if $\lambda\in\Lambda$ is such that $\int t^{-1/2-\epsilon}|\lambda|(dt)<\infty$ for some $\epsilon$, then
\[
    \sqrt{m}\{\widehat{\gamma}_{n,m}(c,\lambda) - \gamma\} \Rightarrow \mathcal{N}\left(rB(c,\gamma,\rho,\lambda),v(c,\gamma,\lambda)\right).
\]
\end{corollary}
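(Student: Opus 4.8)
The plan is to derive the corollary from the already-granted machinery in two moves: first transfer the limit law from the idealized statistic $\tilde{\gamma}_{n,m}^G(c,\lambda)$ to the genuine CVaR-based estimator $\widehat{\gamma}_{n,m}(c,\lambda)$ by means of the variational-distance bound \eqref{eqn: Reiss approx2}, and then read off the limit law of $\tilde{\gamma}_{n,m}^G(c,\lambda)$ from Theorem \ref{thm: smooth_tilde_convergence} together with the variance computation in the observation that follows it.

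First I would verify that the hypotheses of Theorem \ref{thm: smooth_tilde_convergence} are met for the fixed pair $(c,\lambda)$ of the corollary. Taking $\Lambda^{\prime}=\{\lambda\}$ a singleton and $c_0=c_n\equiv c$ constant, the requirement $m_n^{\eta}(1-c_n)\to\infty$ holds because $1-c>0$ is fixed and $m_n\to\infty$, while the uniform-boundedness requirement $\int t^{1/2-\epsilon}|\lambda|(dt)<\infty$ is implied by the corollary's hypothesis $\int t^{-1/2-\epsilon}|\lambda|(dt)<\infty$, since $t^{1/2-\epsilon}\le t^{-1/2-\epsilon}$ on $(0,1]$. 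Theorem \ref{thm: smooth_tilde_convergence} then supplies, on a suitable probability space carrying coupled partial sums $\{S_j\}$ and a standard Wiener process $W$, the expansion $k_n^{1/2}\{\tilde{\gamma}_{n,m}^G(c,\lambda)-\gamma\}=Z_n(c,\gamma,\lambda)+rB(c,\gamma,\rho,\lambda)+o(1)$ (with $k_n=m$), the error tending to $0$ wp1.

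Next I would identify the limit. Because $W_n(t)=-m_n^{-1/2}W(m_n t)$ is itself a standard Wiener process (Brownian scaling and sign symmetry), $Z_n(c,\gamma,\lambda)\stackrel{d}{=}Z(c,\gamma,\lambda)$ for every $n$; the observation following Theorem \ref{thm: smooth_tilde_convergence}, whose hypothesis $\int t^{-1/2-\epsilon}|\lambda|(dt)<\infty$ is exactly the one assumed here, shows that this common law is $\mathcal{N}(0,v(c,\gamma,\lambda))$. Since the error term converges to $0$ wp1 hence in probability, and $rB(c,\gamma,\rho,\lambda)$ is a deterministic shift, Slutsky's theorem gives $\sqrt{m}\{\tilde{\gamma}_{n,m}^G(c,\lambda)-\gamma\}\Rightarrow\mathcal{N}(rB(c,\gamma,\rho,\lambda),v(c,\gamma,\lambda))$.

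Finally I would transfer this to $\widehat{\gamma}_{n,m}(c,\lambda)$. Both $\widehat{\gamma}_{n,m}(c,\lambda)$ and $\tilde{\gamma}_{n,m}^G(c,\lambda)$ are the value of one and the same measurable functional, namely integration of the log-spacing against $\lambda$ under the correspondence $Y_j^{(n)}\leftrightarrow V(S_{j+1}/n)$, applied respectively to $(Y_j^{(n)})_{j=0}^m$ and $(V(S_{j+1}/n))_{j=0}^m$. Because total variation distance does not increase under a measurable map, \eqref{eqn: Reiss approx2} yields $\|\mathcal{L}(\sqrt{m}\{\widehat{\gamma}_{n,m}(c,\lambda)-\gamma\})-\mathcal{L}(\sqrt{m}\{\tilde{\gamma}_{n,m}^G(c,\lambda)-\gamma\})\|=O(m/n)\to0$, so the two statistics share the same weak limit and the claim follows. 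The one genuine subtlety, and the step I expect to need the most care, is this last transfer: Theorem \ref{thm: smooth_tilde_convergence} delivers only an almost-sure expansion on a special coupling space, so it is the variational bound \eqref{eqn: Reiss approx2}, rather than any pathwise identity, that licenses passing the resulting weak limit back to the actual CVaR order statistics; one must also invoke $m/n\to0$, guaranteed by the intermediate-sequence assumption, to ensure this bound is genuinely negligible.
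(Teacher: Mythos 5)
Your proposal is correct and takes essentially the same route as the paper, whose entire proof of the corollary is the observation that Theorem \ref{thm: smooth_tilde_convergence} and the variational-distance approximation \eqref{eqn: Reiss approx2} together yield the asymptotic normality --- exactly your two-step transfer through $\tilde{\gamma}_{n,m}^G(c,\lambda)$. Your added verifications (taking $\Lambda'=\{\lambda\}$ with $c_0=c_n\equiv c$, the domination $t^{1/2-\epsilon}\leq t^{-1/2-\epsilon}$ on $(0,1]$, the distributional identity $Z_n \stackrel{d}{=} Z$ via Brownian scaling, and the invariance of total variation under the common measurable functional) simply make explicit the details the paper leaves implicit.
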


\section{The Optimal Measure Selection} \label{sec: measure selection}
The asymptotic variance of the estimator, as formulated by \cite{segers2005generalized}, is represented as $\int\int \tilde{\sigma}_{c,\gamma}(s,t)\lambda(ds)\lambda(dt)$, where $\lambda \in \Lambda$ and 
\[
    \tilde{\sigma}_{c,\gamma}(s,t)=\frac{1}{st c^{\gamma+1}h_\gamma^{2}(c)} \left[(c^{-\gamma}+c^{\gamma+1})(s \land t) - s \land (ct) - (cs) \land t\right].
\] 
Segers demonstrated that if a measure $\lambda \in \Lambda$ satisfies $\int \tilde{\sigma}_{c,\gamma}(s,t)\lambda(ds) = \beta_0 + \beta_1 \log{(1/t)}, \ 0<t\leq1$, with some $\beta_0\in\mathbb{R}$ and $\beta_1>0$, then $\beta_1$ is both the asymptotic variance and its optimal value. However, attempts to minimize the asymptotic variance of our estimator reveal that there is no solution for $\lambda\in\Lambda$ such that $\int \sigma_{c,\gamma}(s,t)\lambda(ds) = \beta_0 + \beta_1 \log{(1/t)}, \ 0<t\leq1$, with some $\beta_0\in\mathbb{R}$ and $\beta_1>0$. Thus, we restrict $\lambda\in\Lambda$ to specific classes of functions that satisfy \eqref{eqn: space}. We refer to one class of functions as the beta distribution's density function, offering considerable flexibility in function shape within the interval $(0,1]$. Other options include density functions of the Kumaraswamy distribution, logit-normal distribution, and several others. We opt for the beta distribution due to its convenience; modifying its density function analytically can perfectly satisfy \eqref{eqn: space}. Moreover, no other density functions outperform the beta density in the simulated scenarios. Thus, we propose the class of functions as our beta measure with two shape parameters $\alpha$ and $\beta$,
\begin{equation} \label{eqn: beta class}
    \lambda_{\alpha,\beta}(t) = \frac{t^{\alpha-1}(1-t)^{\beta-1}}{\text{Beta}(\alpha-1, \beta)}, 0<t\leq1,
\end{equation}
where $\text{Beta}(\alpha-1, \beta)=\frac{\Gamma(\alpha-1)\Gamma(\beta)}{\Gamma(\alpha+\beta-1)}$ and $\Gamma$ is the Gamma function. If not mentioned explicitly, the $\lambda$ that shows up later is always understood as $\lambda_{\alpha,\beta}$.

The optimal $\alpha$ and $\beta$ will be obtained by minimizing the asymptotic variance in \eqref{eqn: asymptotic variance}, and apparently, they vary across various $\gamma$ values. Since the expression for $v(c,\gamma,\lambda)$ is irregular, explicit formulas for the optimal values of $\alpha(\gamma)$ and $\beta(\gamma)$ are not attainable. However, they can still be computed numerically. One can also seek the optimal $\alpha(\gamma, \rho)$ and $\beta(\gamma, \rho)$ to minimize the value of $B(c,\gamma,\rho,\lambda)$, with the optimal measure $\lambda_{\text{bias}}^*$, which is directly proportional to the asymptotic bias.

Recall that $r$ is defined as $\lim_{n\to\infty} d \sqrt{m} A(\frac{n}{m})$ in Condition \ref{cond2}, and we let $r_{n,m}:=\sqrt{m}A(\frac{n}{m})$. According to the Corollary \ref{thm: asymptotic normality of smoothed estimator}, the variance of the estimator $\widehat{\gamma}_{n,m}(c,\lambda)$ is significant for small $m$ (where $A(n/m)=0$ and $r_{n,m}=0$). However, increasing $m$ (such that $A(n/m),r_{n,m}\in(0,\infty)$) to reduce the variance results in the estimator having a non-zero bias. To address this, we first introduce the asymptotic mean squared error (AMSE) of the estimator, defined as
\begin{equation} \label{eqn: AMSE}
    \text{AMSE}(\widehat{\gamma}_{n,m}(c,\lambda)) = \frac{1}{m}\left[r_{n,m}^2 B^2(c,\gamma,\rho,\lambda) + v(c,\gamma,\lambda)\right].
\end{equation}
The value of $r_{n,m}^2$ is affected by the selection of the sequence $m=m(n)$, and $A(n/m)=r_{n,m}/\sqrt{m}$. One might want to minimize the AMSE with respect to $\lambda$ from \eqref{eqn: AMSE} and seek the optimal $\alpha(\gamma, \rho)$ and $\beta(\gamma, \rho)$. Unfortunately, the value of the AMSE depends on the unknown second-order scale function $A(n/m)$ (and so the function $r_{n,m}$). This fact makes obtaining a practical strategy for minimizing the AMSE difficult. In the existing literature, the estimation of the second-order scale function always serves as a byproduct of the estimation of $\rho$ for the second-order reduced-bias (SORB) estimator. We mention the pioneering papers by \cite{peng1998asymptotically}, \cite{beirlant1999tail}, \cite{feuerverger1999estimating} and \cite{ivette2000alternatives}, among others, based on log-excesses and on scaled log-spacings between subsequent extreme order statistics from a Pareto-type distribution. In these papers, SORB estimators are proposed with asymptotic variances larger than $\gamma^2$ which is the asymptotic variance of the Hill estimator. \cite{caeiro2005direct, caeiro2009reduced}, \cite{gomes2007improving}, and \cite{ivette2008tail} reduce the bias while keeping asymptotic variance the same as that of the Hill estimator. These estimators are called minimum-variance reduced-bias (MVRB) estimators with the specific case $A(t)=\gamma C t^\rho$ and an adequate `external' consistent estimation of the pair of second-order parameters, $(C,\rho) \in (\mathbb{R}, \mathbb{R}^-)$. In the general MDA, $F\in\mathscr{D}(G_\gamma)$, but not just Pareto-type distribution, \cite{cai2013bias} introduces a SORB estimator for $\gamma$ around zero based on the probability weighted moment (PWM) methodology and the estimator for $A$ is also provided. In this paper, we propose an algorithm to smoothly estimate $A$ in a nonparametric way across varying $m$. We first define the approximated AMSE as
\begin{equation} \label{eqn: approx AMSE}
    \widetilde{\text{AMSE}}(\widehat{\gamma}_{n,m}(c,\lambda)) = \frac{1}{m}\left[\widehat{r}_{n,m}^2 B^2(c,\gamma,\rho,\lambda) + v(c,\gamma,\lambda)\right],
\end{equation}
and $\widehat{A}(n/m)=\widehat{r}_{n,m}/\sqrt{m}$, where $\widehat{r}_{n,m}$ is the estimated $r_{n,m}$. 
We denote the optimal AMSE measure as $\lambda_{\text{AMSE}}^*$ which minimizes the approximated AMSE in \eqref{eqn: approx AMSE}. The following heuristic algorithm \ref{algo: r_{n,m}} to approximate the second-order scale function $A$ (and the function $r_{n,m}$) via the bootstrapping approach is suggested. When taking a bootstrap sample $\left\{X_i^*\right\}_{i=1}^n$, we order the bootstrapped observations as $X_1^{*(n)} \geqslant X_2^{*(n)} \geqslant \cdots \geqslant X_n^{*(n)}$. Our estimator based on the bootstrapped sample is then denoted as $\widehat{\gamma}_{n,m}^*(c,\lambda)$. \cite{de2024bootstrapping} prove the bootstrap analogue of the tail quantile
process of the Peaks-over-Threshold (POT) method and show by simulations that the sample variance of bootstrapped estimates can be a good approximation for the asymptotic variance of the PWM estimator as an example. They point out that any other estimators using the POT method, as long as its asymptotic behavior can be
developed as a linear functional of the tail quantile process, can still hold such approximation. We assume this bootstrap procedure can also provide a good approximation of asymptotic bias. Note that in the algorithm, we take our estimator with $\lambda_{\text{bias}}^*$ as the `unbiased' estimator to compute MSE, and in Section \ref{sec: simulations} the estimator with the measure $\lambda_{\text{bias}}^*$ when taking $m=n$ is shown to be the most `consistent' one by simulations. 

\begin{algorithm}[H]
\SetAlgoLined
\KwIn{$X$: Input data, $m_0$: Size of upper order statistics from data chosen}
\SetKwInOut{Parameter}{Parameters}
\Parameter{$B=1000$: Bootstrapping size, $r_{\text{max}}=15$: Largest value of the grid for $r$, $\Delta m$=4: Step size across $m$, $d=0.5$: Distance threshold, $\tau=0.1$: Step threshold}
\KwOut{$r_{n,m_0}$: Estimated value of $\sqrt{m_0}A(\frac{n}{m_0})$}
\SetKwFunction{FindValueR}{FindValueR}
\SetKwProg{Fn}{Function}{:}{}
\Fn{\FindValueR{$X$, $m_0$, $B$, $r_{\text{max}}$, $\Delta m$, $d$, $\tau$}}{
    Generate a grid of $r$ values $\left\{r_j\right\}$ from $0$ to $r_{\text{max}}$\;
    Generate $B$ bootstrapped samples from X\;
    \For{$m \gets \Delta m$ \textbf{to} $m_0$ \textbf{by} $\Delta m$}{
        \eIf{$m=\Delta m$}{
            $r_{n,m} \gets 0$\;
        }{
            Compute each analytical $\text{AMSE}_j := \text{AMSE}(\widehat{\gamma}_{n,m}(c,\lambda_{\text{AMSE}_j}^*))$ conditioned on $\left\{r_j\right\}$ in \eqref{eqn: AMSE}\;
            Compute each estimated MSE from bootstrapped samples with the optimal minimal AMSE measure, $\widehat{\text{MSE}}_j := \widehat{\text{MSE}}(\widehat{\gamma}_{n,m}^*(c,\lambda_{\text{AMSE}_j}^*))$, conditioned on $\left\{r_j\right\}$\;
            \eIf{$m\cdot\text{AMSE}_j$ and $\widehat{\text{MSE}}_j$ have intersection points across $r_j$ (choose the first intersection point $r^*$), and $|r^*-r_{n,m-\Delta m}| \leq d$}{
                $r_{n,m} \gets r^*$\;
            }{
                Find $r_{n,m}$ to minimize $|\widehat{\text{MSE}}_j - m\cdot\text{AMSE}_j|$ with $r\in[\max\{r_{n,m-\Delta m}-\tau, r_{\text{left}}\}, \min\{r_{n,m-\Delta m}+\tau, r_{\text{right}}\}]$ from $\left\{r_j\right\}$ where $r_{\text{left}}$ and $r_{\text{right}}$ are local maximums of $|\widehat{\text{MSE}}_j - m\cdot\text{AMSE}_j|$ around $r_{n,m-\Delta m}$\;
            }
        }
    }
    \KwRet $r_{n,m_0}$\;
}
$r_{n,m_0} \gets$ \FindValueR{$X$, $m_0$, $B$, $r_{\text{max}}$, $\Delta m$, $d$, $\tau$}\;
\caption{Find the value of $r_{n,m_0}$}
\label{algo: r_{n,m}}
\end{algorithm}

Alternatively, we introduce a regularized mean squared error (RegMSE) defined as
\begin{equation} \label{eqn: RegMSE}
    \text{RegMSE}(\widehat{\gamma}_{n,m}(c,\lambda)) = \frac{1}{m}\left[r_C^2 B^2(c,\gamma,\rho,\lambda) + v(c,\gamma,\lambda)\right],
\end{equation}
where $r_C$ can be recognized as the regularization parameter for bias. Optimal values for \(\alpha\) and \(\beta\) can be numerically determined by minimizing the Regularized Mean Squared Error (RegMSE) of the estimator, where these parameters vary with both \(\gamma\) and \(\rho\). Heuristically, we experimented with different values of \(r_C\) and found that \(r_C = 15 \cdot \frac{m}{n}\) consistently yields the best performance across all distributions. In Section \ref{sec: simulations}, both methods that minimize the approximate Asymptotic Mean Square Error (AMSE) and the RegMSE are utilized in simulations.

Since the optimal measure of $\lambda_{\alpha,\beta}$ depends on the unknown $\gamma$ and the second-order parameter $\rho$ which is difficult to estimate, we can employ the adaptive estimator $\widehat{\gamma}_{n,m}(c,\lambda_{\alpha^*(\Bar{\gamma},\Bar{\rho}), \beta^*(\Bar{\gamma},\Bar{\rho})})$. Here, $\Bar{\gamma}$ represents the initial estimate of $\gamma$, and $\Bar{\rho}<0$ is an assumed value and may or may not be equal to the true $\rho$. Further details regarding the estimator $\Bar{\gamma}$ for the initial estimate will be presented in Section \ref{sec: simulations}.

\section{Simulations} \label{sec: simulations}
In the simulation study, we include three CVaR-based estimators: (a) \textit{CVaR-based Pickands Estimator}: $\widehat{\gamma}_{n, m}(2, 2)$, where the parameters are selected to be the same as those in \cite{pickands1975statistical}. (b) \textit{Adaptive CVaR-based Smoothed Estimator Minimizing RegMSE}: $\widehat{\gamma}_{n,m}^{\text{CVaR}}\left(c, \lambda_{\alpha^*(\Bar{\gamma},\Bar{\rho}), \beta^*(\Bar{\gamma},\Bar{\rho})}^{RegMSE}\right)$ with initial estimates $\Bar{\gamma}$ and $\Bar{\rho}$. This estimator uses the beta measure that minimizes the regularized mean squared error (RegMSE) as defined in equation \eqref{eqn: RegMSE}. (c) \textit{Adaptive CVaR-based Smoothed Estimator Minimizing Approximate AMSE}: $\widehat{\gamma}_{n,m}^{\text{CVaR}}\left(c, \lambda_{\alpha^*(\Bar{\gamma},\Bar{\rho}), \beta^*(\Bar{\gamma},\Bar{\rho})}^{AMSE}\right)$ with initial estimates $\Bar{\gamma}$ and $\Bar{\rho}$. This estimator employs the beta measure that minimizes the approximate asymptotic mean squared error (AMSE) as given in equation \eqref{eqn: approx AMSE}. The parameters $c$ and $\Bar{\rho}$ are set to match those in the simulation by \cite{segers2005generalized}, which are $0.75$ and $-1$, respectively. The initial estimator for cases (b) and (c) is chosen as $\Bar{\gamma} = \widehat{\gamma}_{n,m}^{\text{CVaR}}\left(c, \lambda_{\alpha^*(0), \beta^*(0)}^{Var}\right)$, using the beta measure that minimizes the asymptotic variance in equation \eqref{eqn: asymptotic variance} and assuming a value of $0$ for $\gamma$. For convenience, we denote the estimators in (b) and (c) as $\widehat{\gamma}_{n,m}^{\text{CVaR}}\left(c, \lambda_{\Bar{\gamma},\Bar{\rho}}^{Reg^*}\right)$ and $\widehat{\gamma}_{n,m}^{\text{CVaR}}\left(c, \lambda_{\Bar{\gamma},\Bar{\rho}}^{A^*}\right)$, respectively. We also enhance the adaptive generalized Pickands (VaR-based smoothed) estimator proposed by \cite{segers2005generalized} by using beta measures that minimize its regularized mean squared error (RegMSE), denoted as $\widehat{\gamma}_{n,m}^{\text{VaR}}\left(c, \lambda_{\Bar{\gamma}^V,\Bar{\rho}}^{Reg^*}\right)$. The initial estimates for this estimator are set to $\Bar{\gamma}^V = \widehat{\gamma}_{n,m}^{\text{VaR}}\left(c, \lambda_{\alpha^*(0), \beta^*(0)}^{Var}\right)$ and $\Bar{\rho} = -1$. Additionally, we compare the adaptive unconstrained and constrained generalized Pickands estimators by \cite{segers2005generalized}, denoted as $\widehat{\gamma}_{n,m}^S\left(c, \widetilde{\lambda}_{c,\Bar{\gamma}^S}\right)$ and $\widehat{\gamma}_{n,m}^S\left(c, \widetilde{\lambda}_{c,\Bar{\gamma}^S}\right)$. The measures $\widetilde{\lambda}_{c,\gamma}$ and $\widetilde{\lambda}_{c,\gamma,\rho}$ are derived analytically to minimize asymptotic variance, and to eliminate bias while minimizing asymptotic variance, respectively. The initial estimates for these optimal measures are chosen to be $\Bar{\gamma}^S = \widehat{\gamma}_{n,m}^S\left(c, \widetilde{\lambda}_{c,0}\right)$ and $\Bar{\rho} = -1$. We conduct simulations to demonstrate the finite-sample behaviors and performances of all the estimators listed in Table \ref{tab: estimators}. Unless stated otherwise, all estimators, except for the CVaR-based Pickands estimator, follow an adaptive procedure with an initial estimate of $\gamma$ (and $\rho$, if needed). Furthermore, we categorize the last three estimators in Table \ref{tab: estimators} as the group of AMSE-minimized estimators.

\begin{table}[H] 
    \centering
    \renewcommand{\arraystretch}{1.5} 
    \begin{tabular}{l|l}
        \hline
        \multicolumn{2}{c}{Estimators} \\
        \hline
        $\widehat{\gamma}_{n, m}(2, 2)$ & CVaR-based Pickands \\ $\widehat{\gamma}_{n,m}^S(c,\widetilde{\lambda}_{c,\Bar{\gamma}^S})$ & Generalized Pickands unconstrained with $c=0.75$ \\ $\widehat{\gamma}_{n,m}^S(c,\widetilde{\lambda}_{c,\Bar{\gamma}^S,\Bar{\rho}})$ & Generalized Pickands constrained with $c=0.75$ \\ $\widehat{\gamma}_{n,m}^{\text{VaR}}(c, \lambda_{\Bar{\gamma}^V,\Bar{\rho}}^{Reg^*})$ & VaR-based smoothed with $c=0.75$ and RegMSE minimized \\ $\widehat{\gamma}_{n,m}^{\text{CVaR}}(c, \lambda_{\Bar{\gamma},\Bar{\rho}}^{Reg^*})$ & CVaR-based smoothed with $c=0.75$ and RegMSE minimized \\
        $\widehat{\gamma}_{n,m}^{\text{CVaR}}(c, \lambda_{\Bar{\gamma},\Bar{\rho}}^{A^*})$ & CVaR-based smoothed with $c=0.75$ and approximate AMSE minimized \\
        \hline
    \end{tabular}
    \caption{Estimators employed in simulations.}
    \label{tab: estimators}
\end{table}

Table \ref{tab: dist} summarizes distributions involved in the experiments. They are chosen to cover a wide range of cases for $(\gamma, \rho)$. From each distribution, 1000 samples of size $n=1000$ is generated. The $k$-th upper order statistic represents the smallest upper order statistic used in the estimation, i.e., $k$ denotes the size of data employed in the estimation. The CVaR-based Pickands estimator, $\widehat{\gamma}_{n, m}(2, 2)$, is computed at $m=k/4=2, 3, \ldots, 250$ while other estimators in Table \ref{tab: estimators} are computed at $m=k=8, 12, \ldots, 1000$. 

\begin{table}[H] 
\centering
\begin{tabular}{lcc} 
\hline
\specialrule{0em}{2pt}{2pt}
Distribution & $\gamma$ & $\rho$ \\
\specialrule{0em}{2pt}{2pt}
\hline \hline
$\gamma>0$ & & \\
\tabitem GPD $F(x) = 1-\left(1+\gamma x \right)^{-\frac{1}{\gamma}}$, $x \geqslant 0$, with $\gamma = 0.25$ & $\gamma$ & $-\infty$ \\
\tabitem Fréchet $F(x) = \exp \left(-x^{-\alpha}\right)$, $x>0$, with $\alpha=4$ & $1/\alpha$ & $-1$ \\
\tabitem Burr $F(x)=1-\left(1 + x^{c}\right)^{-k}, x \geqslant 0$, with $c=2$ and $k=2$ & $1/ck$ & $-1/k$ \\
\tabitem Student-$t$ with $v=4$ degrees of freedom & $1 / v$ & $-2 / v$ \\
\\
$\gamma=0$ & & \\
\tabitem Exponential $F(x)=1-\mathrm{e}^{-x}, x \geqslant 0$ & 0 & $-\infty$ \\
\tabitem Gumbel $F(x)=\exp \left(-\mathrm{e}^{-x}\right), x \in \mathbb{R}$ & 0 & $-1$ \\
\tabitem Logistic $F(x)=1-2 /\left(1+\mathrm{e}^{x}\right), x \geqslant 0$ & 0 & $-1$ \\
\tabitem Weibull $F(x)=1-\exp \left(- x^{\tau}\right), x \geqslant 0$, with $\tau=1.5$ & 0 & $0$ \\
\tabitem Lognormal  with $\sigma = 1$ & $0$ & $0$ \\
\tabitem Standard normal & $0$ & $0$ \\
\\
$\gamma<0$ & & \\
\tabitem GPD $F(x) = 1-\left(1+\gamma x \right)^{-\frac{1}{\gamma}}$, $0\leqslant x \leqslant -\frac{1}{\gamma}$, with $\gamma= -0.2$ & $\gamma$ & $-\infty$ \\
\tabitem Reversed Weibull $F(x) = \exp \left(-|x|^\alpha\right)$, $x\leqslant0$, with $\alpha=5$ & $-1/\alpha$ & $-1$ \\
\tabitem Uniform & -1 & $-\infty$ \\
\hline
\end{tabular}
\caption{Distributions Employed in Simulation Study}
\label{tab: dist}
\end{table}

To compare the efficiency of the estimators, we adopt the commonly used mean squared error (MSE) which combines the bias and variance at the same time. In Figures \ref{fig: burr0.25}, \ref{fig: frechet0.25}, and \ref{fig: gpd0.25} below, the MSE results are drawn across varying $k(m)$. Hence, they are important not only for the evaluation of the efficiency of the estimators; but also to show the stability of the results across the choice of $m$. In addition to the MSE results, we also provide the estimates of $\gamma$ as a function of $k(m)$. They illustrate the smoothness of the estimator concerning changing $m$. 

\begin{figure}[H]
    \begin{subfigure}{0.5\textwidth}
        \includegraphics[width=\linewidth]{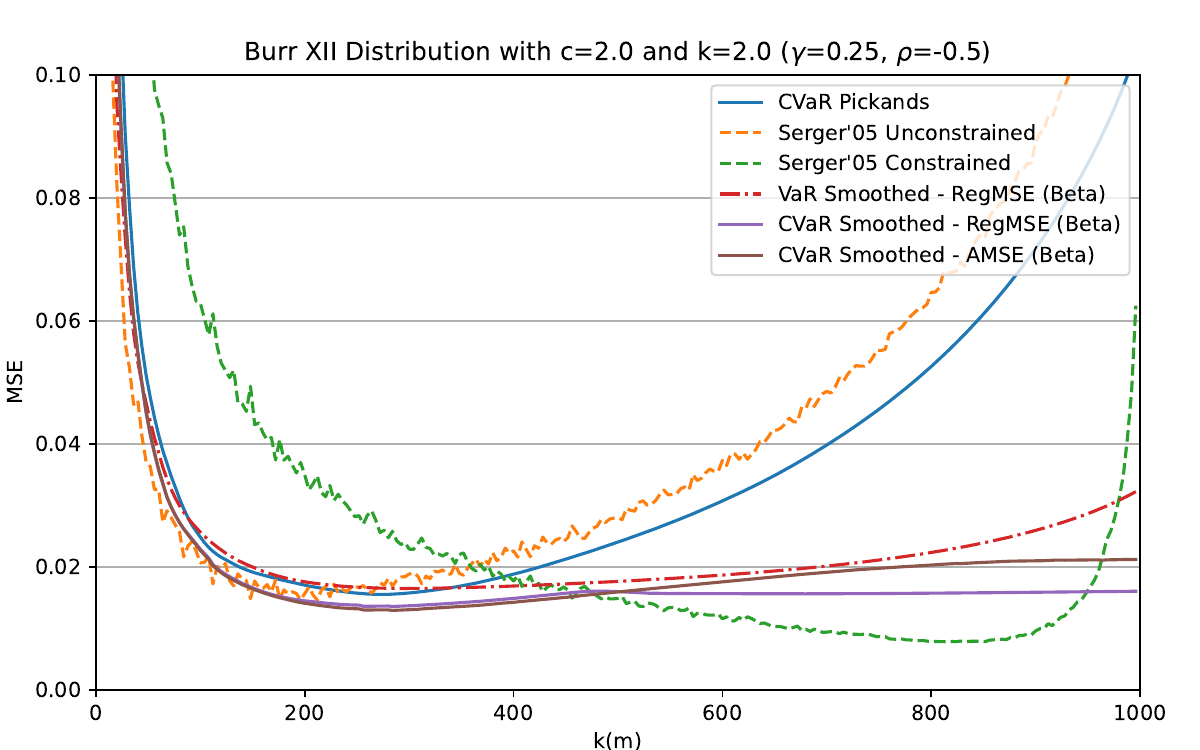}
        \caption{MSE}
        \label{fig: burr0.25_(a)}
    \end{subfigure}\hfill
    \begin{subfigure}{0.5\textwidth}
        \includegraphics[width=\linewidth]{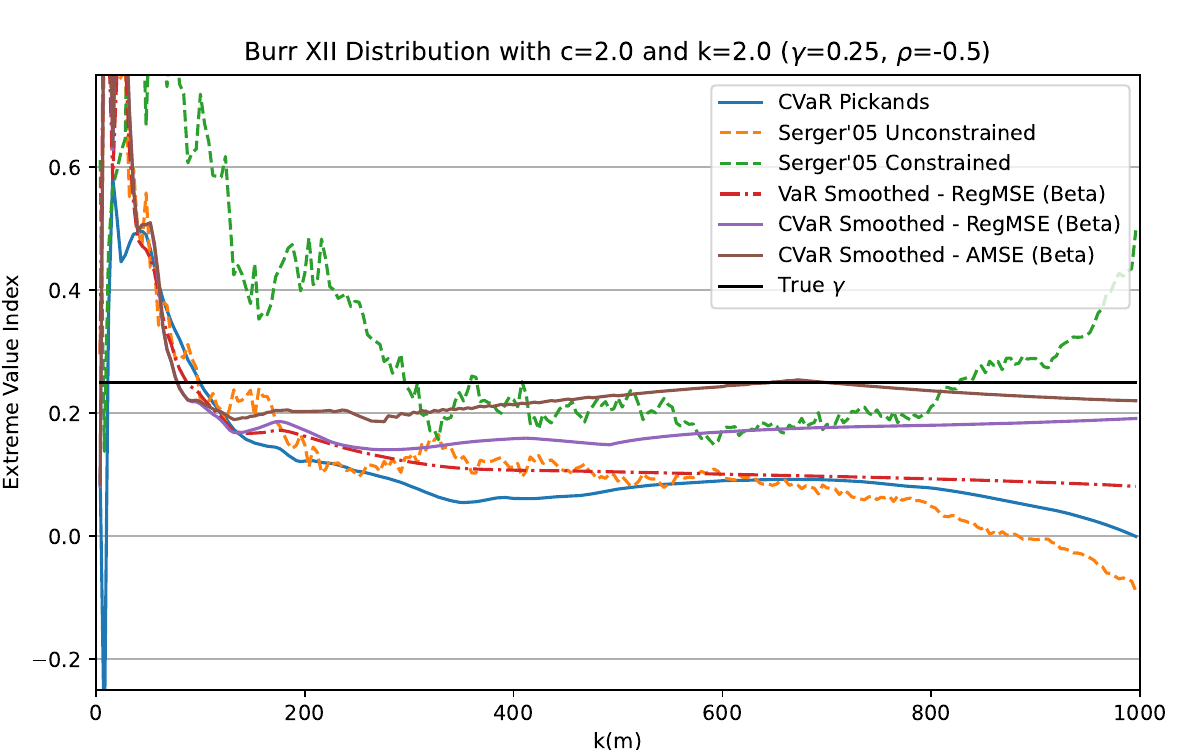}
        \caption{Estimates of $\gamma$}
        \label{fig: burr0.25_(b)}
    \end{subfigure}
    \caption{MSE (1000 samples) and the estimates of $\gamma$ from the first sample against $k(m)$ smallest upper order statistics. The samples are from the the Burr distribution with $\gamma=0.25$}
    \label{fig: burr0.25}
\end{figure}

\begin{figure}[H]
    \begin{subfigure}{0.5\textwidth}
        \includegraphics[width=\linewidth]{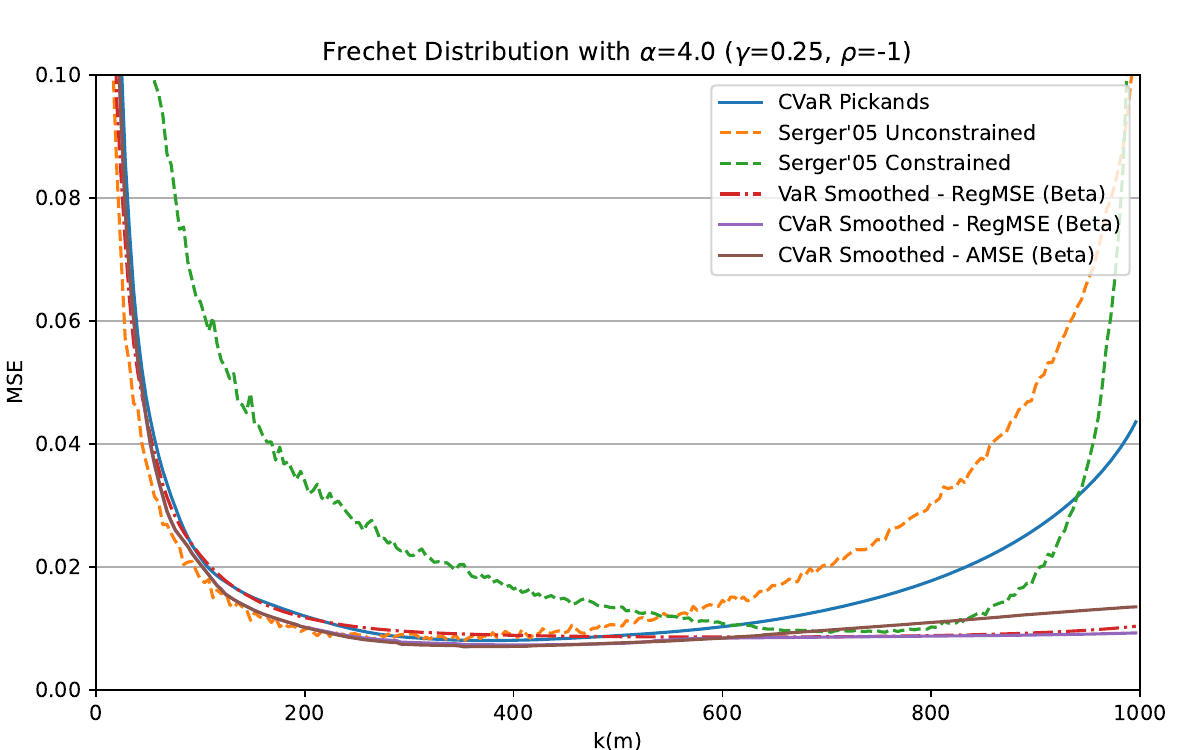}
        \caption{MSE}
        \label{fig: frechet0.25_(a)}
    \end{subfigure}\hfill
    \begin{subfigure}{0.5\textwidth}
        \includegraphics[width=\linewidth]{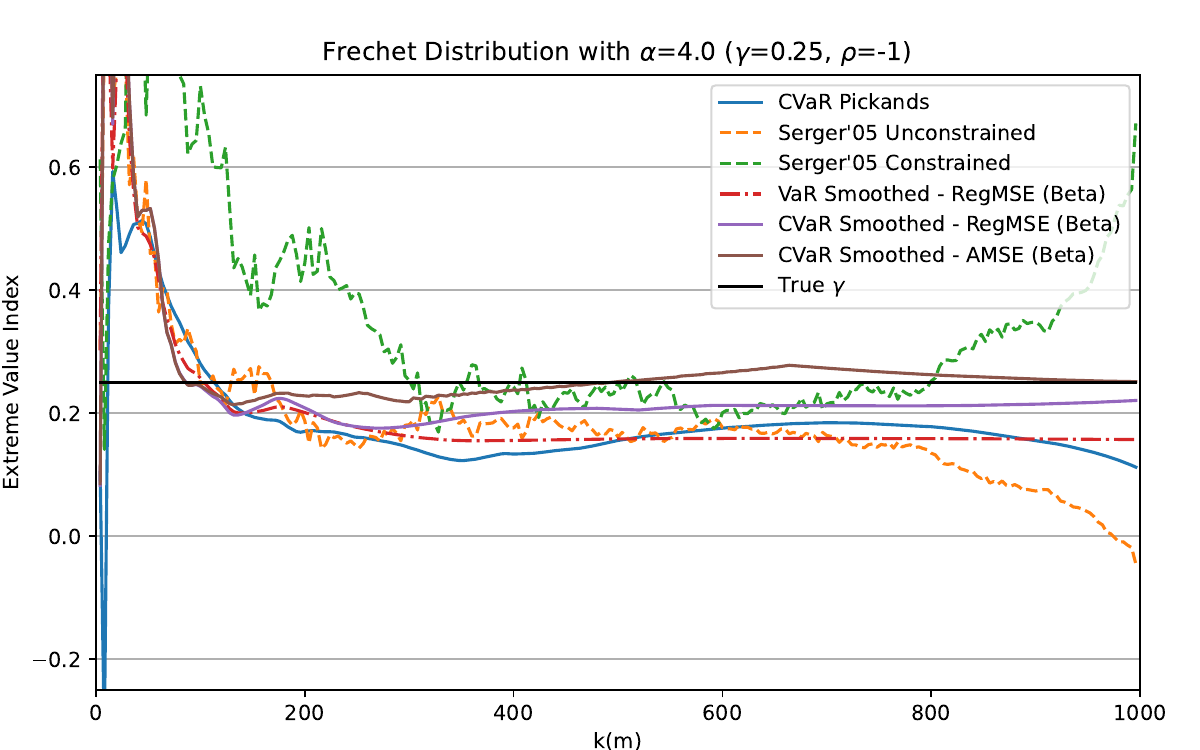}
        \caption{Estimates of $\gamma$}
        \label{fig: frechet0.25_(b)}
    \end{subfigure}
    \caption{The samples are from the Fréchet distribution with $\gamma=0.25$, and others are the same as Figure \ref{fig: burr0.25}}
    \label{fig: frechet0.25}
\end{figure}

\begin{figure}[H]
    \begin{subfigure}{0.5\textwidth}
        \includegraphics[width=\linewidth]{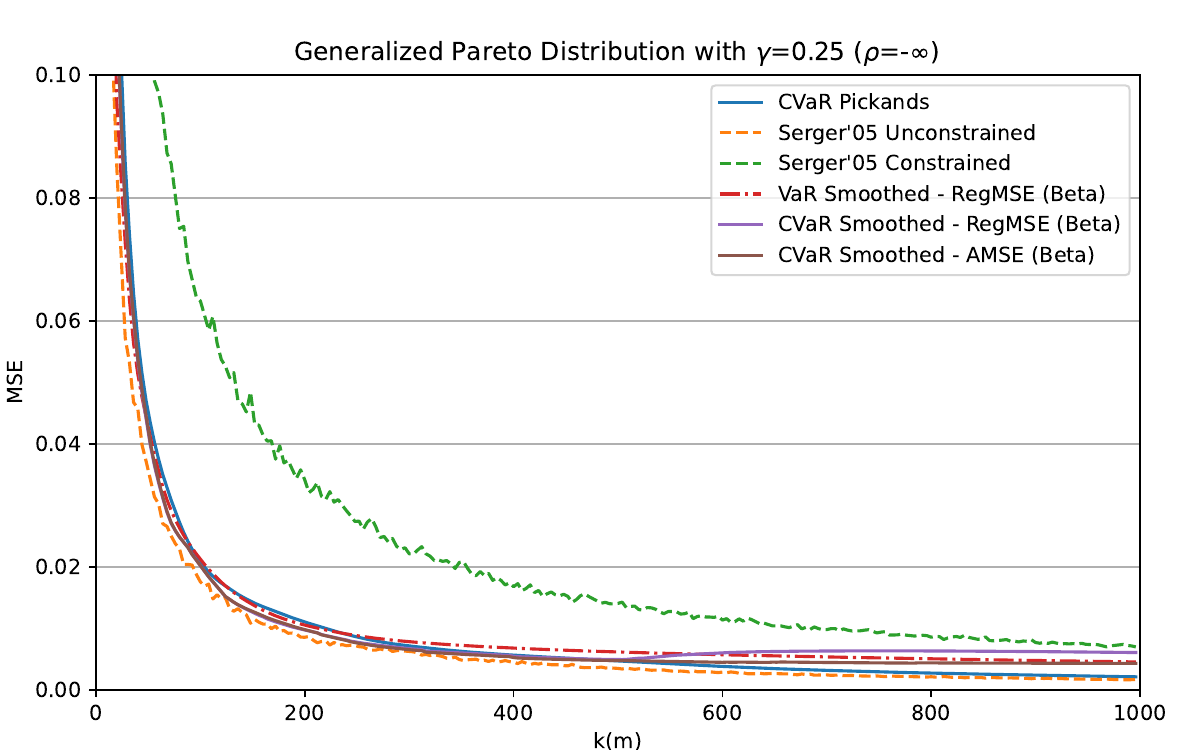}
        \caption{MSE}
        \label{fig: gpd0.25_(a)}
    \end{subfigure}\hfill
    \begin{subfigure}{0.5\textwidth}
        \includegraphics[width=\linewidth]{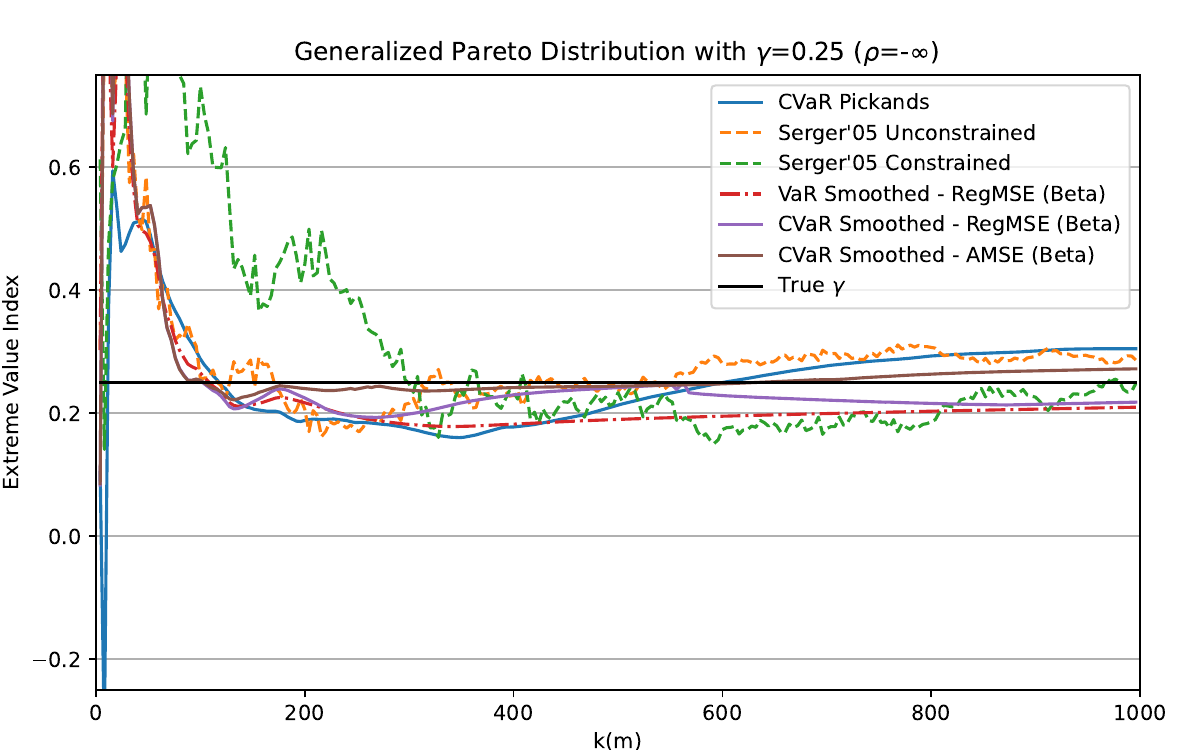}
        \caption{Estimates of $\gamma$}
        \label{fig: gpd0.25_(b)}
    \end{subfigure}
    \caption{The samples are from the the Generalized Pareto distribution with $\gamma=0.25$, and others are the same as Figure \ref{fig: burr0.25}}
    \label{fig: gpd0.25}
\end{figure}

The estimation results for all estimators in Table \ref{tab: estimators} from GPD, Fréchet, and Burr, with $\gamma=0.25$ are depicted in Figures \ref{fig: burr0.25}, \ref{fig: frechet0.25}, and \ref{fig: gpd0.25}, respectively. Additional results for other distributions from Table \ref{tab: dist} are provided in the Appendix \ref{appendix: simulations}. For all distributions with various $\gamma$, two distinct types of behavior of estimators can be observed as the choice of $k$ varies. For distributions with $\rho$ distant from zero (indicating rapid convergence of the excess distribution to the GPD), such as GPD in Figures \ref{fig: gpd0.25}, exponential, and uniform, we observe that as $k$ increases, except for CVaR-based smoothed estimator with RegMSE minimized, estimates become more accurate and converge towards the true extreme value index $\gamma$. While for distributions with $\rho$ close to zero, such as Burr in Figure \ref{fig: burr0.25}, Fréchet in Figure \ref{fig: frechet0.25}, it is observed that initially increasing $k$ leads to improved accuracy in estimation. However, as $k$ continues to increase and approaches its maximum value, which is the sample size $n=1000$, all estimators, except for constrained and smoothed AMSE-minimized estimators, tend to underestimate $\gamma$ (see Figures \ref{fig: burr0.25_(a)} and \ref{fig: frechet0.25_(a)}). This leads to a negative bias and, consequently, a sharp increase in MSE. In contrast, the group of AMSE-minimized estimators effectively controls the MSE across different values of $m$. Within this group, we also observe that their performances vary as $\rho$ changes. Hence, our primary focus is on the results of Burr, Fréchet, and GPD with $\gamma=0.25$ in Figures \ref{fig: burr0.25}, \ref{fig: frechet0.25}, and \ref{fig: gpd0.25}, respectively, as three representative examples. The results for the other distributions such as Student-$t$, Gumbel, Weibull, logistic, lognormal, and reversed Weibull exhibit similar patterns, see Appendix \ref{appendix: simulations} for more details.

For Fréchet and Burr distributions with $\gamma = 0.25$ (where $\rho$ is close to zero), as shown in Figures \ref{fig: burr0.25_(a)} and \ref{fig: frechet0.25_(a)}, we observe that for all estimators not in the group of AMSE-minimized estimators, the performance in terms of MSE is not stable across different values of $k$. As $k$ increases, the MSE for these estimators starts to increase sharply. In Figures \ref{fig: burr0.25_(b)} and \ref{fig: frechet0.25_(b)}, these estimators (except for the constrained generalized Pickands estimator) exhibit a noticeable downward trend in their estimates, leading to a rapid increase in MSE. However, the MSE behavior of the estimators in the AMSE-minimized group remains remarkably stable as a function of $k(m)$. In Figures \ref{fig: burr0.25_(b)} and \ref{fig: frechet0.25_(b)}, we can see that the downward trend in their estimates is significantly suppressed, with estimates varying around the true $\gamma$. When only comparing the constrained and unconstrained generalized Pickands estimators with the VaR-based smoothed estimator with minimized RegMSE, the effect of AMSE minimization is evident. Within the group of AMSE-minimized estimators, the CVaR-based smoothed estimator with minimized RegMSE demonstrates the most stable performance and the greatest smoothness in estimation concerning changes in $k$ (and consequently $m$), especially as $k$ approaches $n$, which involves incorporating all data into the estimation. This estimator also consistently shows the lowest MSE across nearly all values of $k$, except for a short range near $n$ for the Burr distribution, where it is slightly outperformed by the constrained generalized Pickands estimator. Therefore, the CVaR-based smoothed estimator with minimized RegMSE performs the best compared with others. 
For GPD with $\gamma = 0.25$ (where $\rho$ is far from zero), the unconstrained generalized Pickands estimators demonstrate superior performance in terms of MSE compared to the other estimators, as illustrated in Figure \ref{fig: gpd0.25_(a)}. The AMSE-minimized estimators perform slightly worse due to the large difference between the assumed $\bar{\rho} = -1$ and the true $\rho = -\infty$. However, we can observe that their performances are still comparable.
Additionally, from these distribution results, we can observe that the CVaR-based smoothed estimators are less sensitive to $\rho$ (and thus the discrepancy between $\bar{\rho}$ and $\rho$) compared to the VaR-based smoothed estimator. For distributions where $\rho$ is far from zero, although the CVaR-based smoothed estimators do not outperform the VaR-based smoothed estimator with minimized RegMSE, their performance difference is not substantially large, and the CVaR-based smoothed estimators still maintain a stable performance with a low MSE. In contrast, for distributions where $\rho$ is close to zero, particularly the Burr distribution, the CVaR-based smoothed estimators demonstrate notably more stability and lower MSE compared to the VaR-based smoothed estimator.

Based on the results from all distributions in Table \ref{tab: dist}, two estimators stand out: the unconstrained generalized Pickands estimator by \cite{segers2005generalized}, and the CVaR-based smoothed estimator with RegMSE minimized. Three criteria are used for assessing the superiority of an estimator: (a) low MSE; (b) smooth estimation, characterized by small changes as $m$ varies; (c) stable MSE within the range of $m$. Empirical analysis guides the determination of the best estimator, considering these criteria. The decisive factor in choosing the estimator is the second-order parameter $\rho$ of the distribution. In particular, for large negative $\rho$, the unconstrained generalized Pickands estimator emerges as the best choice; for $\rho$ close to $0$, the CVaR-based smoothed estimator with RegMSE minimized is preferable, offering optimal MSE across nearly the entire range of $m$ and demonstrating a more stable performance and smoother estimates as $m$ varies.

\section{Conclusion} \label{sec: conclusion}
This paper incorporates the CVaR order statistics quantity into the generalized class of Pickands estimators developed by \cite{segers2005generalized}. The weak consistency and asymptotic normality of the proposed CVaR-based smoothing estimator are also established. Furthermore, we propose a modified beta measure for smoothing and a heuristic algorithm to approximate AMSE. The objective can also be replaced by RegMSE which is constructed in a heuristic manner. The optimal beta measure is then computed numerically. Alongside the introduction of the CVaR order statistics as an additional smoothing technique, our estimator achieves a lower MSE, a smoother estimate, and a very stable performance across different intermediate order statistics. Based on the three criteria and the simulation results presented in Section \ref{sec: simulations}, our CVaR-based smoothed estimator with minimized RegMSE demonstrates notable advantages overall. The latter two criteria are particularly important in practice as they reduce the need for precise selection of $m$. 

In practical applications, we recommend using the adaptive CVaR-based smoothed estimator with minimized RegMSE, $\widehat{\gamma}_{n,m}^{\text{CVaR}}(c, \lambda_{\Bar{\gamma},\Bar{\rho}}^{Reg^*})$, as a universal estimator for the extreme value index $\gamma$ when $\gamma < 1/2$. This estimator demonstrates consistently smooth estimation and stable MSE performance across varying $m$. One might consider selecting $m$ within the range of 30\% to 100\% of the sample size $n$. When $\rho$ is close to zero, this estimator achieves the best MSE results across nearly all values of $m$. Even when $\rho$ is far from zero, the estimator achieves very comparable results with low MSE relative to the unconstrained generalized Pickands estimator. Therefore, we still opt to use the CVaR-based smoothed estimator with RegMSE minimized. 
In practice, the value of the second-order parameter $\rho$ is unknown, necessitating an initial estimation of it. Even when $\rho$ is known or accurately estimated, our estimator will benefit from this information via bias correction and have an improved performance.

\appendix
\section{Proof of Theorem \ref{thm: weak consistency}} \label{appendix: weak consistency}
Theorem \ref{thm: weak consistency} is an intermediate consequence of the CVaR-based Reiss's approximation \eqref{eqn: Reiss approx2} and the following property.
\begin{proposition}
Let $F\in\mathscr{D}(G_\gamma)$ with $\gamma<1$, $0<c<1$, and $\lambda\in\Lambda$. For every intermediate $m$ and every sequence $\{s_j\}_{j\geq1}$ of positive numbers such that $\lim_{j\to\infty}s_j/j=1$, we have
\begin{equation} \label{eqn: intermediate consistency}
    \lim_{n\to\infty}\int_{(0,1]} \log{\left\{V(n/s_{\lfloor c\lceil tm_n\rceil\rfloor+1}) - V(n/s_{\lceil tm_n\rceil+1})\right\}} \lambda(dt) = \gamma.
\end{equation}
\end{proposition}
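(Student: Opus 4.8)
The plan is to prove \eqref{eqn: intermediate consistency} by a dominated-convergence argument with respect to $|\lambda|$, after reducing the log-spacing of super-quantiles to the CVaR regular variation in \eqref{eqn:1st_order_ERV'_V}. Fix $t\in(0,1]$ and write $j=\lceil tm_n\rceil$. Since $m_n\to\infty$ we have $j\to\infty$ for every fixed $t>0$, while $m_n/n\to0$ forces $s_{j+1}/n\to0$ uniformly in $1\le j\le m_n$. Put
\[
    t_n^\ast:=\frac{n}{s_{j+1}},\qquad y_n:=\frac{s_{\lfloor cj\rfloor+1}}{s_{j+1}},
\]
so that $t_n^\ast\to\infty$, and, because $s_k/k\to1$ and $\lfloor cj\rfloor/j\to c$, we get $y_n\to c\in(0,1)$. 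With these substitutions the spacing inside the logarithm equals $V(y_n/t_n^\ast)-V(1/t_n^\ast)$, whose arguments lie in $(0,1)$.

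First I would reduce to regular variation. The convergence in \eqref{eqn:1st_order_ERV'_V} holds locally uniformly in $y$, so together with $y_n\to c$ and the continuity of $\tilde h_\gamma$ it gives
\[
    \frac{V(y_n/t_n^\ast)-V(1/t_n^\ast)}{a(t_n^\ast)}\longrightarrow\tilde h_\gamma(c)>0 .
\]
Next I would peel off the scale: writing $t_n^\ast=(n/m_n)(m_n/s_{j+1})$ with $m_n/s_{j+1}\to1/t$, the regular variation $a\in RV_\gamma$ of \eqref{eqn: 1st_order_RV} (again locally uniform) yields $a(t_n^\ast)/a(n/m_n)\to t^{-\gamma}$. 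Taking logarithms and collecting terms, the integrand obeys, for each fixed $t\in(0,1]$,
\[
    \log\{V(y_n/t_n^\ast)-V(1/t_n^\ast)\}-\log a(n/m_n)\;\longrightarrow\;-\gamma\log t+\log\tilde h_\gamma(c).
\]
Because $\lambda(0,1]=0$, the deterministic quantity $\log a(n/m_n)$ contributes nothing to the integral, so the left-hand side of \eqref{eqn: intermediate consistency} equals $\int[\log\{V(y_n/t_n^\ast)-V(1/t_n^\ast)\}-\log a(n/m_n)]\lambda(dt)$; and the pointwise limit integrates to $-\gamma\int\log t\,\lambda(dt)+\log\tilde h_\gamma(c)\,\lambda(0,1]=\gamma\int\log(1/t)\lambda(dt)=\gamma$ by the normalizations in \eqref{eqn: space}.

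The remaining, and main, obstacle is to justify the interchange of limit and integration. I would exhibit the $|\lambda|$-integrable dominating function $\Phi(t)=C(1+\log(1/t))$, whose integrability is exactly the content of $\int\log(1/t)|\lambda|(dt)<\infty$ in \eqref{eqn: space}. To get $|\,\cdot\,|\le\Phi$ uniformly in $n$ I would (i) bound the scale factor $a(t_n^\ast)/a(n/m_n)$ by Potter's inequalities, using the elementary estimate $\log(m_n/s_{j+1})=\log(1/t)+O(1)$ valid in both regimes $t\ge1/m_n$ (where $j\asymp tm_n$) and $t<1/m_n$ (where $j=1$); and (ii) show that $y_n$ remains in a fixed compact subinterval $[y_{\min},y_{\max}]\subset(0,1)$ uniformly in $n$ and $t$, which holds because $y_n\to c$ pushes all but finitely many $y_j$ into a neighbourhood of $c$. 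On this compact set $\tilde h_\gamma$ is bounded away from $0$ and $\infty$, and, since every $t_n^\ast\ge n/s_{m_n+1}\to\infty$, the convergence in \eqref{eqn:1st_order_ERV'_V} is uniform there; hence the second logarithm is bounded while the first is $O(\log(1/t))$, giving the domination. The genuinely delicate region is $t\downarrow0$, where the index $\lceil tm_n\rceil$ is small and the pointwise asymptotics degrade; it is controlled not through uniform convergence of the integrand but through the $\log(1/t)$-integrability of $|\lambda|$ together with $\lambda(0,1]=0$, after which dominated convergence delivers \eqref{eqn: intermediate consistency}.
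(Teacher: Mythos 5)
Your proposal is correct and follows essentially the same route as the paper: both split the log-spacing into a scale factor $t^\gamma a(n/s_{j+1})/a(n/m_n)$ handled by the regular variation of $a$ (your Potter-bound domination $A_1+A_2\log(1/t)$ is exactly how the paper, via \cite{segers2005generalized}, obtains its integrable envelope) and a normalized spacing $\{V(s_{\lfloor cj\rfloor+1}/n)-V(s_{j+1}/n)\}/\{a(n/s_{j+1})\tilde h_\gamma(c)\}$ handled by the locally uniform convergence \eqref{eqn:1st_order_ERV'_V}, then conclude by dominated convergence using $\lambda(0,1]=0$ and $\int\log(1/t)\lambda(dt)=1$. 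The only cosmetic difference is that the paper re-derives \eqref{eqn:1st_order_ERV'_V} from \eqref{eqn: 1st_order_ERV_U} inside the proof and asserts a uniform bound on the second log-term directly, whereas you cite the displayed limit and argue the bound through $y_n$ staying in a compact subset of $(0,1)$ --- the same glossing of the small-$j$ region that the paper itself permits.
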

\begin{proof}
By assumption $\lambda(0,1]=0$ and $\int\log{(1/t)}\lambda(dt)=1$. Abbreviate $j=\lceil tm_n\rceil$ and $\gamma_n$ as integral on the left-hand side of \eqref{eqn: intermediate consistency}. 
Note that when $\gamma<1$ and $\gamma\neq0$, since $F$ has a finite mean, both the left hand side and the right hand side of \eqref{eqn: 1st_order_ERV_U} are integrable functions. Due to the local uniformity of \eqref{eqn: 1st_order_ERV_U}, we have
\[
    \lim_{t\to\infty} \frac{1}{y} \int_{0}^{y} \frac{U(\frac{t}{s})-U(t)}{a(t)} ds = \frac{1}{y} \int_{0}^{y} \frac{s^{-\gamma}-1}{\gamma} ds, \quad y>0, \ \gamma<1.
\]
Then,
\begin{equation}
\label{eqn:v-u}
    \lim_{t\to\infty} \frac{V(\frac{y}{t})-U(t)}{a(t)}=\frac{y^{-\gamma}}{\gamma(1-\gamma)}-\frac{1}{\gamma},\quad y>0, \ \gamma<1.
\end{equation}
In particular (\ref{eqn:v-u}) holds with $y=1$. This gives (by subtraction) the equation (\ref{eqn:1st_order_ERV'_V}). The proof of (\ref{eqn:1st_order_ERV'_V}) for the case $\gamma=0$ will be similar.

For $a$ as in \eqref{eqn: 1st_order_ERV_U} and $0<c<1$, we have
\begin{align} \label{eqn: gamma_n - gamma}
    \gamma_n - \gamma &= \int_{(0,1]}\log{\left(\frac{t^\gamma a(n/s_{j+1})}{a(n/m_n)}\right)} \lambda(dt) + \int_{(0,1]} \log{\left(\frac{V(n/s_{\lfloor cj\rfloor+1}) - V(n/s_{j+1})}{\tilde{h}_\gamma(c)a(n/s_{j+1})}\right)}\lambda(dt).
\end{align}
Since $\lim_{n\to\infty} s_{\lceil tm_n\rceil+1}/m_n = t$ for $0<t\leq1$ and $\lim_{j\to\infty} s_{\lceil cj\rceil+1}/s_{j+1}=c$ for $c\in(0,1)$, the integrands of both integrals converge to zero due to the local uniformity in \eqref{eqn: 1st_order_RV} and \eqref{eqn:1st_order_ERV'_V}. It remains to bound them by integrable functions uniformly over $n$. By \cite{segers2005generalized}, there are positive constants $A_1$ and $A_2$ such that for all $n\geq n(\epsilon)$ with $\epsilon>0$ and all $t\in(0,1]$,
\[
    \left|\log{\left(\frac{t^\gamma a(n/s_{j+1})}{a(n/m_n)}\right)}\right| \leq A_1 + A_2\log{(1/t)},
\]
which prove the boundness for the first term in \eqref{eqn: gamma_n - gamma}. For the second term, the conditions on $\left\{s_j\right\}_{j\geq1}$ and the monotonicity of $V$ imply by virtue of the extreme value condition \eqref{eqn: 1st_order_ERV_U} and \eqref{eqn:1st_order_ERV'_V} that
\[
    \limsup_{n\to\infty} \sup_{0<t\leq1} \left| \log{\left(\frac{V(n/s_{\lfloor cj\rfloor+1}) - V(n/s_{j+1})}{\tilde{h}_\gamma(c)a(n/s_{j+1})}\right)} \right| < \infty.
\]
\end{proof}

\section{Proof of Theorem \ref{thm: smooth_tilde_convergence}}
\label{appendix: asymptotic normality}
Let $0<c<1$ and $\lambda\in\Lambda$, and abbreviate $j=\lceil tm_n\rceil$. We have
\begin{align*}
    \sqrt{m}\{\widehat{\gamma}_{n,m}(c,\lambda) - \gamma\} &= \sqrt{m} \biggl\{\int \log(Y_{\lfloor cj \rfloor} - Y_j)\lambda(dt) - \int \log(V(\frac{cm_n}{n})-V(\frac{m_n}{n}))\lambda(dt) - \int\gamma\log(\frac{1}{t})\lambda(dt) \biggr\}\\
    &= \int \sqrt{m} \log\biggl(\frac{t^{\gamma}(Y_{\lfloor cj \rfloor} - Y_j)}{a(\frac{n}{m_n})\tilde{h}_\gamma(c)}\biggr) \lambda(dt).
\end{align*}
Theorem \ref{thm: integrand} below presents a strong approximation of the integrand in the previous equation by a gaussian process, but with the order statistics $Y_j$ replaced by the $V(S_{j+1}/n)$ of approximation \eqref{eqn: Reiss approx2}. Theorem \ref{thm: smooth_tilde_convergence} then follows as an immediate corollary to Theorem \ref{thm: integrand}.

\begin{theorem} \label{thm: integrand}
Assume Conditions \ref{eqn: 1st_order_ERV_U} and \ref{cond2} hold. Let $0<\epsilon<1/2$ and let $0<c_0\leq c_n<1$ be such that $m_n^\eta (1-c_n)\rightarrow\infty$ for some $0<\eta<\epsilon/(2+4\epsilon)$. On a suitable probability space, there exists r.v. $\{S_j,j\geq1\}$ as in \ref{eqn: expo_sum} and a standard Wiener process $W$ such that wp1
\[
    \sup_{0<t\leq1,\ c_0\leq c\leq c_n} |f_n(t;c,\gamma)-g_n(t;c,\gamma,\rho,r)|\rightarrow 0,
\]
as $n\to\infty$, where, abbreviating $j=\lceil tm_n\rceil$,
\[
    f_n(t;c,\gamma)=\sqrt{m_n} \log \biggl(\frac{t^\gamma\{V(S_{\lfloor cj\rfloor+1}/n) - V(S_{j+1}/n)\}}{a(n/m_n)\tilde{h}_\gamma(c)}\biggr)
\]
\[
    g_n(t;c,\gamma,\rho,r)=\frac{t^\gamma \Delta_c\{t^{-\gamma-1}W_n(t)+rH_{\gamma,\rho}(t)\}}{\tilde{h}_\gamma(c)}
\]
and $W_n(t)=-m_n^{-1/2}W(tm_n)$ is also a standard Wiener process.
\end{theorem}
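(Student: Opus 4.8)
The plan is to mirror the strong-approximation argument that \cite{segers2005generalized} uses for the ordinary generalized Pickands integrand and transplant it to the super-quantile function $V$, with the CVaR regular-variation calculus of \cite{li2024asymptotic} supplying the new ingredients. First I would strip off the scale. Abbreviating $\tau=n/m_n$ and $Z_k=S_{k+1}/m_n$, the super-quantile identity $V(y/\tau)-V(1/\tau)=a(\tau)\tilde h(y;\tau)$ of \eqref{eqn:h_tilde} turns the spacing into $a(\tau)\{\tilde h(Z_{\lfloor cj\rfloor};\tau)-\tilde h(Z_{j};\tau)\}$, so that
\[
  f_n(t;c,\gamma)=\sqrt{m_n}\,\log\left(\frac{t^{\gamma}\{\tilde h(Z_{\lfloor cj\rfloor};\tau)-\tilde h(Z_{j};\tau)\}}{\tilde h_\gamma(c)}\right).
\]
The deterministic skeleton is pinned down by the elementary computation $t^{\gamma}\Delta_c h_\gamma(t)=\tilde h_\gamma(c)$, with $\Delta_c$ as in \eqref{eqn:delta_c}: once $Z_{\lfloor cj\rfloor}\to ct$ and $Z_j\to t$ (a consequence of $S_{k}/k\to1$), the argument of the logarithm tends to $1$, so $f_n$ is of the form $\sqrt{m_n}\log(1+o(1))$ and may be linearised, the higher-order terms of $\log(1+x)$ being treated as remainders to be controlled later.

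Second, I would install a single coupling carrying both the bias and the stochastic fluctuations. A Koml\'os--Major--Tusn\'ady (Hungarian) embedding of the partial sums $\{S_j\}$ of \eqref{eqn: expo_sum} supplies a Wiener process $W$ with $S_j-j$ approximated by $W(j)$ at the sharp almost-sure logarithmic rate; rescaling gives the standard Wiener process $W_n(t)=-m_n^{-1/2}W(tm_n)$ of the statement. Writing the linearised argument minus $1$ as a deterministic part plus a stochastic part, the deterministic part is governed by the second-order behaviour of $\tilde h(\cdot;\tau)$: by \eqref{eqn:2nd_order_ERV_V} one has $\tilde h(y;\tau)=\tilde h_\gamma(y)+d\,A(\tau)\tilde H_{\gamma,\rho}(y)+o(A(\tau))$, and since $\tilde H_{\gamma,\rho}(ct)-\tilde H_{\gamma,\rho}(t)=\Delta_c H_{\gamma,\rho}(t)$ directly from \eqref{eqn:H_tilde} and \eqref{eqn:delta_c}, Condition~\ref{cond2} (which forces $d\sqrt{m_n}A(\tau)\to r$) produces exactly the drift $r\,t^{\gamma}\Delta_c H_{\gamma,\rho}(t)/\tilde h_\gamma(c)$ appearing in $g_n$; this uses Condition~\ref{cond1} through \eqref{eqn:2nd_order_ERV}.

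The genuinely delicate step, and the one I expect to be the main obstacle, is the emergence of the \emph{integral} functional $\Delta_c\{s^{-\gamma-1}W_n(s)\}$ in the stochastic part. Unlike the situation in \cite{segers2005generalized}, where the limit is a pointwise evaluation of $W$, here $\tilde h$ is itself an average of $h$, so the Brownian fluctuations enter through the operator $\tfrac1z\int_0^z(\cdot)$ rather than through a single endpoint. The care required is precisely that one cannot simply differentiate $V$ at the single argument $S_{j+1}/n$ and read off a pointwise term; the averaging must be propagated so that the increments of $W_n$ across the whole block of CVaR order statistics are retained, which is what converts $t^{-\gamma-1}W_n(t)$ into its $\Delta_c$-transform and reproduces the covariance kernel $\sigma_{c,\gamma}(s,t)$ built from $B_s(t)=\tfrac1t\int_0^t u^{-\gamma-1}W(us)\,du$. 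This is the point at which I would lean on, and if necessary extend, the CVaR regular-variation machinery of \cite{li2024asymptotic}, reconciling the distributional representation \eqref{eqn: Reiss approx2} with the averaged Brownian functional.

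Finally I would upgrade the pointwise expansion to the claimed supremum over $0<t\le1$ and $c_0\le c\le c_n$ holding with probability one. Two boundary regimes are the technical burden. Near $t\to0$ only a handful of CVaR order statistics enter, so the linearisation error and the discretisation $j=\lceil tm_n\rceil$ must be dominated by integrable majorants; this is where $\gamma<1/2$ and the margin $\epsilon$ are consumed, exactly as the integrability of $\tilde h_\gamma$ and $\tilde H_{\gamma,\rho}$ at the origin. Near $c\to1$ the spacing collapses and the logarithm becomes unstable, so the rate restriction $m_n^{\eta}(1-c_n)\to\infty$ is calibrated to keep the argument bounded away from $0$ and to compare the two nearly-equal averages at a resolution finer than the embedding error. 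Potter-type uniform inequalities for $a\in RV_\gamma$ and for $A\in RV_\rho$, together with the sharp almost-sure rate of the Hungarian coupling, should then dominate the quadratic-and-higher terms of the logarithm, the error of replacing $Z_k$ by its mean, and the $o(A(\tau))$ second-order remainder, all uniformly. Once Theorem~\ref{thm: integrand} is in place, Theorem~\ref{thm: smooth_tilde_convergence} follows by integrating $f_n-g_n$ against $\lambda$, dominating the integrand uniformly over $\Lambda'$ via the assumed bound on $\int t^{1/2-\epsilon}|\lambda|(dt)$, and invoking \eqref{eqn: Reiss approx2} to return from $V(S_{j+1}/n)$ to the CVaR order statistics.
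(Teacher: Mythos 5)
Your high-level strategy is the same as the paper's---a Segers-type strong approximation with a Hungarian (KMT) embedding of the $S_j$, the second-order condition \eqref{eqn:2nd_order_ERV_V} for the drift, and weighted uniformity arguments for the two boundary regimes---and your deterministic bookkeeping is correct: the identities $t^\gamma\Delta_c h_\gamma(t)=\tilde h_\gamma(c)$ and $\Delta_c\{H_{\gamma,\rho}(t)\}=\tilde H_{\gamma,\rho}(ct)-\tilde H_{\gamma,\rho}(t)$ do reproduce the drift $r\,t^\gamma\Delta_c\{H_{\gamma,\rho}(t)\}/\tilde h_\gamma(c)$ in $g_n$, which the paper recovers only at the very end of Appendix B via the functional relation for $H_{\gamma,\rho}$ (your route to the bias is, if anything, more direct than the paper's Lemma \ref{lemma:a1} and Proposition \ref{prop:part2}, which work at the random scale $n/S_{j+1}$ with the weighted claims $Q_n(t)=t^{-\rho}+t^{-\epsilon/2}\delta_n(t)$, etc.). But there is a genuine gap at precisely the step you yourself flag as ``the main obstacle,'' and your opening reduction forecloses any repair. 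Once you write the spacing as $a(\tau)\{\tilde h(Z_{\lfloor cj\rfloor};\tau)-\tilde h(Z_j;\tau)\}$, the quantity $f_n(t;c,\gamma)$ is a smooth function of the \emph{two} random variables $S_{\lfloor cj\rfloor+1}$ and $S_{j+1}$ only; any linearisation of $\tilde h(\cdot;\tau)$ at these two points can only produce a pointwise Gaussian term proportional to $(ct)^{-\gamma}W_n(ct)/(ct)-t^{-\gamma}W_n(t)/t$, never the path functional $\Delta_c\{s^{-\gamma-1}W_n(s)\}$, which depends on the whole trajectory $\{W_n(s):0<s\le t\}$ and has a strictly larger variance (for $\gamma=0$, $\mathrm{Var}\bigl(t^{-1}\int_0^t v^{-1}W_n(v)\,dv\bigr)=2/t$ versus $\mathrm{Var}\bigl(W_n(t)/t\bigr)=1/t$). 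Declaring that you would ``lean on, and if necessary extend, the machinery of \cite{li2024asymptotic}'' names the hole without filling it.

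The mechanism the paper actually uses is absent from your proposal: it inserts the random pivot $\frac{1}{c}\int_0^c h_\gamma\bigl(S_{\lfloor uj\rfloor+1}/S_{j+1}\bigr)\,du-\int_0^1 h_\gamma\bigl(S_{\lfloor uj\rfloor+1}/S_{j+1}\bigr)\,du$ between the terms $\mathrm{II}_n$ and $\mathrm{IV}_n$ of a four-term decomposition, so that the stochastic piece involves the \emph{entire block} of partial sums $\{S_{\lfloor uj\rfloor+1}:0<u\le1\}$; the functional equations of $h_\gamma$ together with the KMT coupling then turn the $u$-average of $W(\lfloor uj\rfloor+1)/(\lfloor uj\rfloor+1)$ into $\Delta_c\{t^{-\gamma-1}W_n(t)\}$. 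This is the content of Proposition \ref{prop:part4}, with its four remainders $R_{n,1},\dots,R_{n,4}$, the threshold $j_n=\lceil m_n^\nu\rceil$ with $4\eta<\nu<2\epsilon/(1+2\epsilon)$, and the almost-sure bounds \eqref{eqn:serger1}--\eqref{eqn:serger5} calibrated against $m_n^\eta(1-c_n)\to\infty$; none of this quantitative work appears in your plan beyond the assertion that it must be done. To your credit, you have put your finger on a real tension: since $V(S_{j+1}/n)$ is a function of the single variable $S_{j+1}$, the averaged Brownian functional can only emerge from the joint block representation of the CVaR order statistics standing behind \eqref{eqn: Reiss approx2}, not from differentiating $V$ at one random argument---but diagnosing the difficulty is not the same as proving the theorem, and as written your argument would terminate in the wrong (pointwise) Gaussian limit, whose covariance does not match the kernel $\sigma_{c,\gamma}(s,t)$ built from $B_s(t)=\frac{1}{t}\int_0^t u^{-\gamma-1}W(us)\,du$.
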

To study the function $f_n$ of interest, decompose it as 
\[
    f_n(t;c,\gamma)=\sqrt{m_n}(\text{I}_n+\text{II}_n+\text{III}_n+\text{IV}_n),
\]
where, with the convenient abbreviation $j=\lceil tm_n\rceil$,
\[
    \exp\text{I}_n=\frac{a(n/S_{j+1})}{a(n/m_n)} \biggl(\frac{S_{j+1}}{m_n}\biggr)^\gamma,
\]
\[
    \exp\text{II}_n=\frac{\tilde{h}(\frac{S_{\lfloor cj\rfloor+1}}{S_{j+1}};\frac{n}{S_{j+1}})}{\frac{1}{c}\int_0^c h_\gamma(\frac{S_{\lfloor uj\rfloor+1}}{S_{j+1}})du - \int_0^1 h_\gamma(\frac{S_{\lfloor uj\rfloor+1}}{S_{j+1}})du},
\]
\[
    \exp\text{III}_n=\biggl(\frac{tm_n}{S_{j+1}}\biggr)^\gamma,
\]
\[
    \exp\text{IV}_n=\frac{\frac{1}{c}\int_0^c h_\gamma(\frac{S_{\lfloor uj\rfloor+1}}{S_{j+1}})du - \int_0^1 h_\gamma(\frac{S_{\lfloor uj\rfloor+1}}{S_{j+1}})du}{\tilde{h}_\gamma(c)},
\]
where
\[
    \tilde{h}(y;x) = \frac{V(\frac{y}{x})-V(\frac{1}{x})}{a(x)}=\frac{\frac{1}{y}\int_0^y h(w;x)dw - \int_0^1 h(w;x)dw}{a(x)}.
\]
The terms $\text{I}_n$ and $\text{II}_n$ will lead to a deterministic bias term, while the terms $\text{III}_n$ and $\text{IV}_n$ will lead to a mean-zero Gaussian term. Also, $\text{I}_n$ and $\text{III}_n$ depend on $t$, while $\text{II}_n$ and $\text{IV}_n$ depend on $t$ and $c$. We will devote a proposition to each of the terms separately.

In the proof of the asymptotic normality by \cite{segers2005generalized}, he proved the following propositions.
\begin{prop} \label{prop:part1}
For every $\epsilon>0$, we have wp1,
\[
    \sup_{0<t\leq1} t^\epsilon |m_n^{1/2} \text{I}_n(t) - rh_{\rho}(t)| \rightarrow 0.
\]
\end{prop}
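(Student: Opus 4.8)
The plan is to exploit that $\text{I}_n$ is assembled only from the auxiliary function $a$ and the exponential partial sums $S_{j+1}$ — it involves neither the super-quantile function $V$ nor the spacing parameter $c$ — so it is exactly the term analysed by \cite{segers2005generalized}, and the argument is the purely one-dimensional second-order analysis. First I would recast $\text{I}_n$ as a single instance of the second-order relation \eqref{eqn:cond_alternative}. Writing $j=\lceil tm_n\rceil$ and setting $t=n/m_n$, $y=S_{j+1}/m_n$, so that $t/y=n/S_{j+1}$, a direct computation shows
\[
    \text{I}_n=\log a(n/S_{j+1})-\log a(n/m_n)-\gamma\log(1/y),
\]
which is precisely the numerator in \eqref{eqn:cond_alternative}, so that $\text{I}_n/A(n/m_n)\to d\,h_\rho(S_{j+1}/m_n)$ pointwise. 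Multiplying by $m_n^{1/2}$, using $m_n^{1/2}A(n/m_n)=r_{n,m_n}$ with $d\,r_{n,m_n}\to r$ (Condition~\ref{cond2}) and the strong law $S_{j+1}/m_n\to t$ wp1, gives the pointwise heuristic $m_n^{1/2}\text{I}_n\to r\,h_\rho(t)$. The real task is to upgrade this to the weighted uniform statement.

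The main tool is a Drees-type (second-order Potter) refinement of \eqref{eqn:cond_alternative}: for every $\delta>0$ there are $t_0,C$ such that, whenever $t\geq t_0$ and $t/y\geq t_0$,
\[
    \left|\frac{\log a(t/y)-\log a(t)-\gamma\log(1/y)}{A(t)}-d\,h_\rho(y)\right|\leq C\,\delta\,\max\bigl(y^{-\rho+\delta},\,y^{-\rho-\delta}\bigr).
\]
Because the first-order index $\gamma$ has been cancelled inside $\log a$, this envelope is controlled by $\rho$ alone and is at most a small power of $y$ as $y\downarrow0$. Substituting $y=S_{j+1}/m_n$, multiplying by the bounded factor $r_{n,m_n}$, and splitting $m_n^{1/2}\text{I}_n-r\,h_\rho(t)$ into the second-order approximation error, the scalar convergence $d\,r_{n,m_n}\to r$, and the substitution error $r\{h_\rho(S_{j+1}/m_n)-h_\rho(t)\}$ reduces the problem to controlling $S_{j+1}/m_n$ uniformly in $t$. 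On each interval $[\delta_0,1]$ with $\delta_0>0$ fixed, the almost-sure uniform law $\sup_{1\leq j\leq m_n}|S_j/j-1|\to0$ keeps $S_{j+1}/m_n$ in a fixed multiplicative neighbourhood of $t$, so continuity of $h_\rho$ and of the envelope dispatch the bulk of the range.

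I expect the genuine obstacle to be the uniformity as $t\downarrow0$, where $j=\lceil tm_n\rceil$ is a small integer, the strong law has not taken effect, and $S_{j+1}/m_n$ fluctuates by an order-one multiple of $t$. This is exactly what the weight $t^\epsilon$ absorbs, and the crux is to choose $\delta<\epsilon$ in the inequality above: using the almost-sure lower bound $S_{j+1}\geq c\,j\geq c\,tm_n$ valid for all large $j$ (the finitely many small $j$ being handled individually, since each $S_{j+1}$ is a.s. a fixed positive number), the weighted envelope and substitution errors near $0$ are dominated by a constant times $t^{\epsilon-\delta}$ (a higher power when $\rho<0$), which vanishes uniformly over a shrinking left neighbourhood of $0$. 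Carrying out this small-$j$ bookkeeping — matching the partial-sum bounds against the $t^\epsilon$ weight with $\delta<\epsilon$ — rather than the asymptotic identification of the limit $r\,h_\rho(t)$, is where the real work lies.
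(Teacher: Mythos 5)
Your proposal is correct and takes essentially the same route as the paper, which does not reprove this proposition at all but cites \cite{segers2005generalized} on the observation you make at the outset: $\text{I}_n$ involves only $a$ and $S_{j+1}$, not $V$ or $c$, so the CVaR modification leaves it untouched. Your reconstruction—rewriting $\text{I}_n$ as the numerator of \eqref{eqn:cond_alternative} with $y=S_{j+1}/m_n$, applying a uniform Drees-type second-order bound with envelope $\max\bigl(y^{-\rho+\delta},y^{-\rho-\delta}\bigr)$, using $d\sqrt{m_n}A(n/m_n)\to r$ from Condition \ref{cond2} together with the strong law for $S_j$, and matching the weight $t^\epsilon$ by choosing $\delta<\epsilon$ with separate treatment of the finitely many small $j$—is precisely the argument of the cited source, with no gaps.
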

\begin{prop} \label{prop:part2_VaR}
For every $\epsilon>0$ and $0<c_0<1$, we have wp1,
\[
    \sup_{0<t\leq1,\ c_0\leq c<1} t^\epsilon |m_n^{1/2} \tilde{\text{II}}_n(t) - rt^{-\rho}\frac{H_{\gamma,\rho}(c)}{h_{\gamma}(c)}| \rightarrow 0,
\]
where
\[
    \tilde{\text{II}}_n(t) = \log\biggl(\frac{h(\frac{S_{\lfloor cj\rfloor+1}}{S_{j+1}};\frac{n}{S_{j+1}})}{h_\gamma(\frac{S_{\lfloor cj\rfloor+1}}{S_{j+1}})}\biggr)
\]
and
\[
    h(y;x) = \frac{U(\frac{x}{y})-U(x)}{a(x)}.
\]
\end{prop}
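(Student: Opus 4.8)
The plan is to linearize the logarithm and reduce the statement to the uniform version of the second-order condition \eqref{eqn:2nd_order_ERV}. Abbreviating $j=\lceil tm_n\rceil$, $y_n:=S_{\lfloor cj\rfloor+1}/S_{j+1}$ and $x_n:=n/S_{j+1}$, I would write
\[
    \tilde{\text{II}}_n(t)=\log\biggl(1+\frac{h(y_n;x_n)-h_\gamma(y_n)}{h_\gamma(y_n)}\biggr)
\]
and use $\log(1+w)=w+O(w^2)$. By \eqref{eqn:2nd_order_ERV} the numerator is of order $A(x_n)H_{\gamma,\rho}(y_n)$, and since the ratio $H_{\gamma,\rho}(y)/h_\gamma(y)$ extends continuously to $y=1$ (both vanish there to the same order), the bracketed quantity is of order $A(x_n)=o(1)$ uniformly; the linearization is then valid and, after multiplying by $m_n^{1/2}$, the quadratic remainder is negligible because $m_n^{1/2}A(x_n)^2\to0$ uniformly in $t\le1$ (here $t^{-\rho}\le1$ for $\rho\le0$). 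Thus the claim reduces to
\[
    \sup_{0<t\le1,\ c_0\le c<1} t^\epsilon\,\Bigl|\,m_n^{1/2}\,dA(x_n)\,\frac{H_{\gamma,\rho}(y_n)}{h_\gamma(y_n)}-r\,t^{-\rho}\,\frac{H_{\gamma,\rho}(c)}{h_\gamma(c)}\Bigr|\to0.
\]

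To establish this I would first invoke a \emph{uniform} form of \eqref{eqn:2nd_order_ERV}, available under Condition \ref{cond1} via a Potter-type bound: for any small $\delta>0$, wp1 and eventually,
\[
    \bigl|h(y;x)-h_\gamma(y)-dA(x)H_{\gamma,\rho}(y)\bigr|\le \delta\,A(x)\,\max\bigl(y^{-\gamma-\rho-\delta},\,y^{-\gamma-\rho+\delta}\bigr)
\]
uniformly in $x$ large and $y>0$. It then remains to replace the random arguments by their deterministic limits. By the strong law of large numbers $S_{j+1}/(j+1)\to1$ wp1, so $y_n\to c$ and $x_n\sim n/(tm_n)$, and continuity of $H_{\gamma,\rho}/h_\gamma$ on $(0,1]$ gives $H_{\gamma,\rho}(y_n)/h_\gamma(y_n)\to H_{\gamma,\rho}(c)/h_\gamma(c)$. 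Since $A\in RV_\rho$, local uniformity of regular variation yields $A(x_n)=A(n/S_{j+1})\sim t^{-\rho}A(n/m_n)$, and Condition \ref{cond2} supplies $m_n^{1/2}dA(n/m_n)\to r$. Assembling these gives exactly $r\,t^{-\rho}H_{\gamma,\rho}(c)/h_\gamma(c)$, in parallel with the companion term treated in Proposition \ref{prop:part1}.

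The hard part will be the uniformity, and in particular the behaviour as $t\downarrow0$, where $j=\lceil tm_n\rceil$ is small and the approximation $S_{j+1}/(j+1)\to1$ deteriorates. This is precisely what the weight $t^\epsilon$ is designed to absorb: I would control the fluctuations of $S_{j+1}$ uniformly in $j$ through a strong approximation of the partial-sum process (or a Marcinkiewicz--Zygmund/maximal inequality), obtaining a relative error of order $t^{-\delta}$ that is killed by $t^\epsilon$ whenever $\delta<\epsilon$. The power-of-$y$ factor in the Potter bound and the $t^{-\rho}$ growth inherited from $A$ must be reconciled with this weight on the same range, while the double supremum over $(t,c)$ is handled by noting that $c$ enters only through the continuous, bounded factor $H_{\gamma,\rho}(c)/h_\gamma(c)$ once $y_n$ has been pinned near $c$. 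Matching these rates uniformly is the crux; once it is done the convergence holds wp1 and uniformly, as claimed.
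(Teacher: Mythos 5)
Your proposal takes essentially the same route as the paper: this proposition is imported from \cite{segers2005generalized} without a new proof, and the paper's CVaR analogue (Proposition \ref{prop:part2}) is established by exactly your decomposition, namely writing $m_n^{1/2}\tilde{\text{II}}_n = P_n\, Q_n(t)\, R_n(t,c)$ with $P_n = d\,m_n^{1/2}A(n/m_n)\to r$ by Condition \ref{cond2}, $Q_n(t)=A(n/S_{j+1})/A(n/m_n)=t^{-\rho}+t^{-\epsilon/2}\delta_n(t)$ via a Potter-type bound absorbed by the weight $t^\epsilon$, and the log-linearization of the ratio $h(y_n;x_n)/h_\gamma(y_n)$ handled by the uniform second-order convergence as in Lemma \ref{lemma:a1} together with $y_n\to c$. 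The "crux" you flag (uniformity of the $A$-ratio as $t\downarrow 0$, where $j=\lceil tm_n\rceil$ is small) is precisely the one claim the paper also defers to Segers, so your sketch is faithful both in substance and in where the remaining technical work lies.
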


\begin{prop} \label{prop:part3}
For every $\epsilon>0$, we have wp1,
\[
    \sup_{0<t\leq1} t^{1/2+\epsilon} \left|m_n^{1/2} \text{III}_n(t) - \gamma\frac{W_n(t)}{t}\right| \rightarrow 0.
\]
\end{prop}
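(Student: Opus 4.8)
The plan is to exploit the fact that $\text{III}_n(t)=\gamma\log\bigl(tm_n/S_{j+1}\bigr)$ depends on the data only through the partial sums $S_{j+1}$ of the standard exponentials, with $j=\lceil tm_n\rceil$; it involves neither the super-quantile function $V$ nor the CVaR averaging, and is therefore structurally identical to the corresponding term in the proof of \cite{segers2005generalized}. Consequently the statement can be quoted directly from his argument, and in particular no use is made of the second-order condition or of the regular variation of $a$: the result is purely a statement about the fluctuations of $S_{j+1}$ around its mean $j+1$. Below I outline the self-contained argument, noting that the case $\gamma=0$ is trivial since both sides vanish.

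First I would invoke a strong (Koml\'os--Major--Tusn\'ady) approximation: on a suitable probability space one can construct the $\xi_i$ together with a standard Wiener process $W$ so that $|S_k-k-W(k)|=O(\log k)$ almost surely, uniformly in $k$. Writing $j=\lceil tm_n\rceil$ so that $j+1-tm_n\in[1,2)$, this yields the decomposition
\[
    S_{j+1}=tm_n+\Delta_n(t),\qquad \Delta_n(t)=\bigl(j+1-tm_n\bigr)+W(j+1)+R_n(t),
\]
with $R_n(t)=O(\log(tm_n))$ a.s.; the dominant contribution to $\Delta_n(t)$ is the Brownian term $W(j+1)$, of order $\sqrt{tm_n}$ up to an iterated-logarithm factor.

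Next I would Taylor-expand the logarithm. Since $\Delta_n(t)/(tm_n)\to0$,
\[
    m_n^{1/2}\log\frac{tm_n}{S_{j+1}}
    =-m_n^{1/2}\log\!\Bigl(1+\frac{\Delta_n(t)}{tm_n}\Bigr)
    =-\frac{\Delta_n(t)}{t\,m_n^{1/2}}+O\!\Bigl(\frac{\Delta_n(t)^2}{t^2 m_n^{3/2}}\Bigr).
\]
The leading term, after replacing $W(j+1)$ by $W(tm_n)$, equals $-W(tm_n)/(t\,m_n^{1/2})=W_n(t)/t$ by the definition $W_n(t)=-m_n^{-1/2}W(tm_n)$; multiplying by $\gamma$ recovers the target process. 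It then remains to show that, once multiplied by the weight $t^{1/2+\epsilon}$, the three residual contributions vanish uniformly in $0<t\le1$: (i) the deterministic offset $j+1-tm_n$ and the KMT remainder $R_n(t)$; (ii) the Brownian increment $W(j+1)-W(tm_n)$ over an interval of length $O(1)$, which I would bound by L\'evy's modulus of continuity; and (iii) the quadratic Taylor remainder.

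The main obstacle is the \emph{uniform} control as $t\downarrow0$, i.e.\ for the smallest indices $j$, where $S_{j+1}$ is a sum of only $O(1)$ exponentials and its relative fluctuations do not vanish. There the factor $m_n^{1/2}$ in front is unbounded, and the argument succeeds only because the weight $t^{1/2+\epsilon}=(tm_n)^{1/2+\epsilon}m_n^{-1/2-\epsilon}$ supplies a compensating $m_n^{-\epsilon}$; the delicate point is to verify that each residual term, times $t^{1/2+\epsilon}$, is dominated by a deterministic null sequence simultaneously over the whole range of $t$, balancing the $O(\log(tm_n))$ KMT error and the law-of-the-iterated-logarithm size of $W(tm_n)$ against this weight. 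This is exactly where Segers' careful uniform estimates are needed, and I would follow them, since $\text{III}_n$ is left unchanged by the CVaR construction.
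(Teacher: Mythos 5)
Your proposal matches the paper exactly: the paper gives no proof of Proposition \ref{prop:part3} at all, quoting it verbatim from \cite{segers2005generalized} precisely because $\text{III}_n$ depends only on the exponential partial sums $S_{j+1}$ and is untouched by the CVaR averaging --- the same structural observation on which you base your argument. Your supplementary KMT/Taylor sketch is consistent with Segers' original proof (it is the Hungarian construction $S_j = j + W(j) + Z_j$ the paper itself invokes later in the proof of Proposition \ref{prop:part4}), and you correctly flag that the expansion fails for bounded $j$ so that the weight $t^{1/2+\epsilon}$ must absorb the small-$t$ range, which is exactly where Segers' uniform estimates do the work.
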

\begin{prop} \label{prop:part4_VaR}
Let $0<\epsilon<1/2$ and $0<c_0\leq c_n<1$ be such that $m_n^\eta(1-c_n)\rightarrow\infty$ as $n\rightarrow\infty$ for some $0<\eta<\epsilon/(2+4\epsilon)$. Then wp1,
\[
    \sup_{0<t\leq1,\ c_0\leq c\leq c_n} t^{1/2+\epsilon} \left|m_n^{1/2}\tilde{\text{IV}}_n(t) - \frac{1}{|h_\gamma(c)|}\biggl[\frac{W_n(ct)}{ct} - \frac{W_n(t)}{t}\biggr]\right|\rightarrow 0,
\]
where
\[
    \tilde{\text{IV}}_n(t) = \frac{h_\gamma(\frac{S_{\lfloor cj\rfloor+1}}{S_{j+1}})}{h_\gamma(c)}.
\]
\end{prop}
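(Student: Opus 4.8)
The plan is to read $\tilde{\mathrm{IV}}_n$ as the purely stochastic term that arises from replacing the random ratio $S_{\lfloor cj\rfloor+1}/S_{j+1}$ by its almost-sure limit $c$, and to extract its leading Gaussian fluctuation by a strong (Komlós--Major--Tusnády type) approximation of the exponential partial sums. On the probability space carrying $\{S_j\}$ from \eqref{eqn: expo_sum} one may couple a Wiener process $W$ so that $S_j = j + W(j) + O(\log n)$ uniformly in $1\le j\le n$ wp1; after the rescaling $W_n(t)=-m_n^{-1/2}W(tm_n)$ this becomes $S_{\lceil tm_n\rceil+1}/(tm_n)=1-m_n^{-1/2}W_n(t)/t+o(m_n^{-1/2})$. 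The entire argument is then a careful propagation of this single approximation through the ratio, through $h_\gamma$, and through the logarithm, with all error terms weighed against the factor $t^{1/2+\epsilon}$.

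First I would expand the ratio. Writing $k_1=\lfloor cj\rfloor+1$, $k_2=j+1$ with $j=\lceil tm_n\rceil$, substituting the strong approximation into $S_{k_1}/S_{k_2}$ gives $S_{k_1}/S_{k_2}=c\bigl(1+\delta_n\bigr)$ with $\delta_n=m_n^{-1/2}\{W_n(t)/t-W_n(ct)/(ct)\}+o(m_n^{-1/2})$, the division error being quadratically small because numerator and denominator both concentrate. Next I would linearize $h_\gamma$ about its limiting argument $c$, using that $h_\gamma$ is continuously differentiable on a neighbourhood of each $c\in[c_0,c_n]$, and expand the logarithm (equivalently, the ratio minus one) to first order. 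Multiplying by $m_n^{1/2}$ and cancelling the $O(1)$ contribution then isolates the announced Brownian functional $\tfrac{1}{|h_\gamma(c)|}\{W_n(ct)/(ct)-W_n(t)/t\}$, while all higher-order pieces are collected into a remainder that must be shown to vanish after weighting.

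The main obstacle is uniformity, and it splits into the two boundary regimes that the weight $t^{1/2+\epsilon}$ and the growth condition on $c_n$ are engineered to tame. As $t\downarrow0$ the index $j=\lceil tm_n\rceil$ can be of order one, the law of large numbers for $S_j/j$ degrades, and $W_n(t)/t$ blows up like $t^{-1/2}$ by the law of the iterated logarithm; the factor $t^{1/2+\epsilon}$ converts this into a $t^{\epsilon}$ bound and simultaneously absorbs the Taylor remainders, but matching the rate at which the weight vanishes against the rate at which the strong approximation deteriorates for small $j$ is the delicate point. As $c\uparrow1$ the spacing $j-\lfloor cj\rfloor\approx(1-c)tm_n$ shrinks, so $S_{k_1}/S_{k_2}\to1$ and both $h_\gamma$ of the ratio and $h_\gamma(c)$ tend to $0$, producing a $0/0$ form; here one needs enough exponential summands in the increment for the Brownian approximation of the difference to stay valid, which is exactly what $m_n^\eta(1-c_n)\to\infty$ guarantees, and the admissible range $0<\eta<\epsilon/(2+4\epsilon)$ is dictated by balancing this shrinking spacing against both the weight exponent and the moment parameter $\epsilon$.

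Concretely, I would organize the uniform estimates by splitting $\sup_{0<t\le1}$ at a threshold $t\asymp m_n^{-\theta}$, treating $t$ above the threshold by the linearization above and $t$ below it by crude maximal bounds on $S_j$ and on the oscillation of $W$, with $\theta$ chosen compatibly with $\eta$ and $\epsilon$. Since $\tilde{\mathrm{IV}}_n$ is precisely the term that carries the difference structure of the problem, the identical scheme applies to the averaged CVaR term $\mathrm{IV}_n$ upon replacing the point evaluations at $c$ and $1$ by the integrals $\tfrac1c\int_0^c(\cdot)\,du$ and $\int_0^1(\cdot)\,du$ of \eqref{eqn:delta_c}; combining this with Propositions \ref{prop:part1}--\ref{prop:part3} then assembles the full integrand in Theorem \ref{thm: integrand}.
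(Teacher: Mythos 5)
Your overall scheme is the right one and is essentially the argument behind this result: the paper itself does not prove Proposition \ref{prop:part4_VaR} (it is quoted from Segers, 2005), but its proof of the CVaR analogue, Proposition \ref{prop:part4}, follows exactly your plan --- a Hungarian/KMT coupling $S_j=j+W(j)+O(\log j)$ wp1, a split of $\sup_{0<t\leq1}$ at a polynomial threshold $j_n=\lceil m_n^{\nu}\rceil$ with $4\eta<\nu<2\epsilon/(1+2\epsilon)$ (your remark that $\eta<\epsilon/(2+4\epsilon)$ comes from balancing the weight exponent against the shrinking spacing is precisely what makes this range nonempty), crude bounds absorbed by the weight $t^{1/2+\epsilon}$ below the threshold, and above it a first-order expansion of $h_\gamma$ and of the logarithm whose remainder $O(\log j/(j(1-c)^2))$ is $o(j^{-1/2})$ exactly because $m_n^{\eta}(1-c_n)\to\infty$. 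The one ingredient you only gesture at is the discretization step: replacing $W(\lfloor cj\rfloor+1)/(\lfloor cj\rfloor+1)$ by $W(ctm_n)/(ctm_n)$ uniformly requires the wp1 bound $\sup_{x>0,\,c_0\leq c\leq1}x^{1/2+\epsilon}\bigl|W(\lfloor c\lceil x\rceil\rfloor+1)/(\lfloor c\lceil x\rceil\rfloor+1)-W(cx)/(cx)\bigr|<\infty$ (the paper's \eqref{eqn:serger5}), not merely a modulus-of-continuity remark, since the error must be controlled against the weight over the whole range of $x$.

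There is, however, a concrete algebraic problem with your final identification of the limit. Carrying out your own linearization: with $S_{\lfloor cj\rfloor+1}/S_{j+1}=c(1+\delta_n)$ and $\delta_n=m_n^{-1/2}\{W_n(t)/t-W_n(ct)/(ct)\}$, the chain rule gives $h_\gamma(c(1+\delta_n))=h_\gamma(c)-c^{-\gamma}\delta_n+o(m_n^{-1/2})$ because $h_\gamma'(c)\,c=-c^{-\gamma}$, so the log-ratio times $m_n^{1/2}$ converges to $\frac{c^{-\gamma}}{h_\gamma(c)}\bigl[\frac{W_n(ct)}{ct}-\frac{W_n(t)}{t}\bigr]$, i.e.\ the normalizing constant is $c^{-\gamma}/h_\gamma(c)=1/|h_\gamma(1/c)|$, which differs from the announced $1/|h_\gamma(c)|$ whenever $\gamma\neq0$. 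This is corroborated internally: adding the limit of Proposition \ref{prop:part3} gives the total Gaussian term $\frac{1}{h_\gamma(c)}\bigl[c^{-\gamma}\frac{W_n(ct)}{ct}-\frac{W_n(t)}{t}\bigr]$ (using $\frac{1}{h_\gamma(c)}+\gamma=\frac{c^{-\gamma}}{h_\gamma(c)}$), whose covariance reproduces the paper's kernel $\tilde{\sigma}_{c,\gamma}(s,t)$ with its $c^{-\gamma}$ and $c^{\gamma+1}$ factors, and in the proof of Proposition \ref{prop:part4} the same weight appears as the $u^{-\gamma}$ inside the $\Delta_c$-integrals. So the display in the statement carries a transcription slip (it also drops a $\log$: as literally written, $\tilde{\mathrm{IV}}_n\to1$ and $m_n^{1/2}\tilde{\mathrm{IV}}_n$ diverges --- you silently and correctly read it as the log-ratio, in parallel with $\tilde{\mathrm{II}}_n$), and your claim that the expansion ``isolates the announced Brownian functional'' glosses over exactly the factor $c^{-\gamma}$ that a careful execution of your own outline would have surfaced. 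Correcting that constant, your argument goes through and is the same proof as the cited one.
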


Standing on propositions \ref{prop:part2_VaR} and \ref{prop:part4_VaR}, we will devote propositions for the terms $\text{II}_n$ and $\text{IV}_n$. 

\begin{lemma} \label{lemma:a1}
From (\ref{eqn:cond_alternative}), we have for every $0<y_0<1$, 
\[
    \frac{1}{A(x)} \log\biggl(\frac{\tilde{h}(y;x)}{\tilde{h}_\gamma(y)}\biggr) \rightarrow d\frac{\tilde{H}_{\gamma,\rho}(y)}{\tilde{h}_\gamma(y)} \quad \text{as } x\to\infty, \ \text{uniformly in } y_0\leq y<1,
\]
where
\[
    \tilde{H}_{\gamma,\rho}(y)=\frac{1}{y}\int_{0}^{y}H_{\gamma,\rho}(x)dx - \int_{0}^{1}H_{\gamma,\rho}(x)dx,
\]
and
\[
    \tilde{h}_\gamma(y) = \frac{1}{y}\int_0^y h_\gamma(w) dw - \int_0^1 h_\gamma(w) dw
\]
\end{lemma}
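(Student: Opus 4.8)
The plan is to read this as the super-quantile analogue of the statement for $h$ that underlies Segers' Proposition~\ref{prop:part2_VaR}, replacing the triple $(h,h_\gamma,H_{\gamma,\rho})$ by $(\tilde h,\tilde h_\gamma,\tilde H_{\gamma,\rho})$ and using the second-order condition \eqref{eqn:2nd_order_ERV_V} for $V$ in place of \eqref{eqn:2nd_order_ERV} for $U$. The essential device is to convert a log-ratio into a ratio of differences by linearization. Setting $\phi_x(y):=\{\tilde h(y;x)-\tilde h_\gamma(y)\}/A(x)$, I would write
\[
\frac{1}{A(x)}\log\!\left(\frac{\tilde h(y;x)}{\tilde h_\gamma(y)}\right)=\frac{1}{A(x)}\log\!\left(1+\frac{A(x)\,\phi_x(y)}{\tilde h_\gamma(y)}\right),
\]
so that once the argument of the logarithm is shown to be uniformly small, the elementary estimate $\log(1+z)=z+O(z^2)$ reduces everything to the uniform convergence of $\phi_x(y)/\tilde h_\gamma(y)$ to $d\,\tilde H_{\gamma,\rho}(y)/\tilde h_\gamma(y)$. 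Note that $\tilde h_\gamma(y)=\{y^{-\gamma}-1\}/\{\gamma(1-\gamma)\}>0$ on $[y_0,1)$, so the logarithm and the divisions are legitimate for $x$ large, where $\tilde h(y;x)$ is close to $\tilde h_\gamma(y)$.

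The first ingredient is a \emph{uniform} version of \eqref{eqn:2nd_order_ERV_V} on $[y_0,1]$. Since $\tilde h$ is obtained from $h$ by the averaging in \eqref{eqn:h_tilde}, I would start from the uniform second-order behaviour of $h$ implied by Condition~\ref{cond1}, control the integrand near $w=0$ by a Potter-type bound $|h(w;x)-h_\gamma(w)|/A(x)\le C\,w^{-\gamma-\epsilon}$ (integrable for $\gamma<1$ and $\epsilon$ small), and pass the limit inside both $\frac1y\int_0^y$ and $\int_0^1$ by dominated convergence; the averages of uniformly convergent integrands converge uniformly in $y$, giving $\phi_x(y)\to d\,\tilde H_{\gamma,\rho}(y)$ uniformly on $[y_0,1]$ (this is essentially the content cited from \citealp{li2024asymptotic}). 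The second ingredient is boundedness of the target on the closed interval: both $\tilde H_{\gamma,\rho}$ and $\tilde h_\gamma$ are $C^1$ and vanish at $y=1$, with $\tilde h_\gamma'(1)=-1/(1-\gamma)\neq0$ and $\tilde H_{\gamma,\rho}'(1)=-\int_0^1 H_{\gamma,\rho}(x)\,dx$, so by L'Hôpital the ratio $\tilde H_{\gamma,\rho}/\tilde h_\gamma$ extends continuously to $y=1$ with value $(1-\gamma)\int_0^1 H_{\gamma,\rho}(x)\,dx$; hence $d\,\tilde H_{\gamma,\rho}(y)/\tilde h_\gamma(y)$ is bounded on $[y_0,1]$, which disposes of the quadratic remainder in the linearization.

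The main obstacle is uniformity as $y\to1$, precisely where $\tilde h_\gamma(y)\to0$ sits in the denominator and the crude bound $\phi_x(y)/\tilde h_\gamma(y)=O(A(x))$ breaks down. I would handle this by splitting $[y_0,1)$ at $1-\delta$. On $[y_0,1-\delta]$ the function $\tilde h_\gamma$ is bounded below, so uniform convergence of $\phi_x$ makes the whole argument routine. On $[1-\delta,1)$ I would upgrade the uniform convergence to a weighted Potter-type inequality of the form
\[
\left|\phi_x(y)-d\,\tilde H_{\gamma,\rho}(y)\right|\le \epsilon\,\tilde h_\gamma(y),\qquad x\ge x_0,\ y\in[1-\delta,1),
\]
which is plausible because $\phi_x-d\tilde H_{\gamma,\rho}$ and $\tilde h_\gamma$ both vanish linearly at $y=1$; dividing by $\tilde h_\gamma(y)$ then yields $\phi_x(y)/\tilde h_\gamma(y)-d\,\tilde H_{\gamma,\rho}(y)/\tilde h_\gamma(y)\to0$ uniformly and simultaneously keeps $A(x)\phi_x(y)/\tilde h_\gamma(y)\to0$ so the logarithm can be linearized. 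The comparability $\tilde H_{\gamma,\rho}(y)\asymp\tilde h_\gamma(y)\asymp(1-y)$ near $y=1$, established via the derivatives above, is what makes such a weighted bound achievable; obtaining it cleanly (rather than through an ad hoc Taylor expansion of $\tilde h(\cdot;x)$ using absolute continuity of $U$) is the step I expect to require the most care. Combining the two regimes and letting $\delta\downarrow0$ gives the claimed uniform limit on $y_0\le y<1$.
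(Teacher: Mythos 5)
Your proposal is correct and follows essentially the same route as the paper: average the second-order expansion of $h(\cdot\,;x)$ to obtain a uniform expansion $\tilde h(y;x)/\tilde h_\gamma(y)=1+A(x)\bigl[d\,\tilde H_{\gamma,\rho}(y)/\tilde h_\gamma(y)+R(x,y)\bigr]$ with $R(x,y)\to0$ uniformly on $y_0\le y\le1$, then Taylor-expand the logarithm using that $\tilde H_{\gamma,\rho}/\tilde h_\gamma$ is bounded (the paper notes it is decreasing with limit $0$ as $y\to1$). The one step you flag as needing the most care---the weighted bound $|\phi_x(y)-d\,\tilde H_{\gamma,\rho}(y)|\le\epsilon\,\tilde h_\gamma(y)$ near $y=1$---is exactly what the paper secures via Segers' representation $h(y;x)=\int_y^1 w^{-1-\gamma}\{1+A(x)[dh_\rho(w)+r(x,w)]\}\,dw$ with $r(x,\cdot)\to0$ uniformly: the remainder in $\tilde h(y;x)-\tilde h_\gamma(y)$ is then the centered average $A(x)\bigl(\frac1y\int_0^y r(x,u)\,du-\int_0^1 r(x,u)\,du\bigr)$, which vanishes at $y=1$ and is $O\bigl((1-y)\sup|r|\bigr)$ there, hence dominated by $\epsilon\,\tilde h_\gamma(y)\asymp(1-y)/(1-\gamma)$, so the inequality you called ``plausible'' is indeed available and your argument closes.
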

\begin{proof}
\cite{segers2005generalized} showed that 
\[
    h(y;x) = \int_y^1 \frac{a(\frac{x}{w})}{a(x)}wdw = \int_y^1 w^{1-\gamma}\{1+A(x)[dh_\rho(w)+r(x,w)]\}dw,
\]
where $r(x,w) \rightarrow 0$ as $x\to\infty$ uniformly in $y_0\leq w\leq1$, and then
\[
    \frac{h(y;x)}{h_\gamma(y)} = 1 + A(x) \biggl[d\frac{H_{\gamma,\rho}(y)}{h_\gamma(y)} + R(x,y)\biggr]
\]
where $R(x,y)\rightarrow0$ as $x\rightarrow\infty$ uniformly in $y_0\leq y\leq1$.

Then, we have
\begin{align*}
    \frac{1}{y}\int_0^y h(u;x) du &= \frac{1}{y}\int_0^y \int_u^1 v^{1-\gamma}\{1+A(x)[dh_\rho(v)+r(x,v)]\}dv du \\
    &= \frac{1}{y}\int_0^y \Bigl\{\frac{u^{-\gamma}-1}{\gamma} + A(x)[dH_{\gamma,\rho}(u)+r(x,u)]\Bigr\}du \\
    &= \frac{y^{1-\gamma}}{\gamma(1-\gamma)} - \frac{1}{\gamma} + A(x) \frac{1}{y}\int_0^y \{dH_{\gamma,\rho}(u)+r(x,u)\}du
\end{align*}
where $r(x,u) \rightarrow 0$ as $x\to\infty$ uniformly in $y_0\leq u\leq1$. Therefore,
\begin{align*}
    \frac{\tilde{h}_(y;x)}{\tilde{h}_\gamma(y)} &= \frac{\frac{1}{y}\int_0^y h(w;x) dw - \int_0^1  h(w;x) dw}{\tilde{h}_\gamma(y)} \\
    &= 1 + A(x)\biggl[d\frac{\tilde{H}_{\gamma,\rho}(y)}{\tilde{h}_\gamma(y)} + R(x,y)\biggr]
\end{align*}
where $R(x,y)\rightarrow0$ as $x\to\infty$ uniformly in $y_0\leq y\leq1$. Since the function $0<y<1 \mapsto \tilde{H}_{\gamma,\rho}(y)/\tilde{h}_\gamma(y)$ is decreasing and converges to 0 as $y\to1$, we can take a Taylor expansion of the logarithm around 1, proving the lemma.
\end{proof}

\begin{prop} \label{prop:part2}
For every $\epsilon>0$ and $0<c_0<1$, we have wp1,
\[
    \sup_{0<t\leq1,\ c_0\leq c<1} t^\epsilon \left|m_n^{1/2} \text{II}_n(t) - rt^{-\rho}\frac{\tilde{H}_{\gamma,\rho}(c)}{\tilde{h}_{\gamma}(c)}\right| \rightarrow 0,
\]
\end{prop}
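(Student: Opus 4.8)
The plan is to read Proposition \ref{prop:part2} as the tail-super-quantile counterpart of Segers' Proposition \ref{prop:part2_VaR}: the roles played there by the first-order kernel $h_\gamma$ and the second-order kernel $H_{\gamma,\rho}$ are now played by their averaged versions $\tilde h_\gamma$ and $\tilde H_{\gamma,\rho}$, and the second-order condition \eqref{eqn:2nd_order_ERV} that drives the quantile argument is replaced by Lemma \ref{lemma:a1}. Throughout, write $j=\lceil tm_n\rceil$, $y_n=S_{\lfloor cj\rfloor+1}/S_{j+1}$ and $x_n=n/S_{j+1}$, so that $\mathrm{II}_n(t)=\log\bigl(\tilde h(y_n;x_n)/D_n\bigr)$, where $D_n=\tfrac1c\int_0^c h_\gamma(S_{\lfloor uj\rfloor+1}/S_{j+1})\,du-\int_0^1 h_\gamma(S_{\lfloor uj\rfloor+1}/S_{j+1})\,du$ is the empirical averaged denominator appearing in $\exp\mathrm{II}_n$.

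First I would insert the deterministic continuous first-order quantity $\tilde h_\gamma(y_n)$ and split
\[
\mathrm{II}_n(t)=\log\frac{\tilde h(y_n;x_n)}{\tilde h_\gamma(y_n)}+\log\frac{\tilde h_\gamma(y_n)}{D_n}.
\]
For the first summand, Lemma \ref{lemma:a1} gives $\log\bigl(\tilde h(y_n;x_n)/\tilde h_\gamma(y_n)\bigr)=A(x_n)\bigl[d\,\tilde H_{\gamma,\rho}(y_n)/\tilde h_\gamma(y_n)+R(x_n,y_n)\bigr]$ with $R\to0$ uniformly for $y_n$ in a compact subinterval of $(0,1)$. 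By the strong law $S_k/k\to1$ wp1 one has $y_n\to c$ and $x_n/(n/m_n)\to 1/t$ wp1; multiplying by $m_n^{1/2}$ and combining $A\in RV_\rho$ (so that $A(x_n)/A(n/m_n)\to t^{-\rho}$) with Condition \ref{cond2} ($d\,m_n^{1/2}A(n/m_n)\to r$) and the continuity of $y\mapsto\tilde H_{\gamma,\rho}(y)/\tilde h_\gamma(y)$ at $c$ then produces exactly the claimed deterministic limit $r\,t^{-\rho}\,\tilde H_{\gamma,\rho}(c)/\tilde h_\gamma(c)$.

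The second summand is where the super-quantile structure genuinely departs from Segers' single-order-statistic argument, in which the denominator is simply $h_\gamma(y_n)$. Here $D_n$ is the averaged (empirical) first-order denominator, so its $O_p(m_n^{-1/2})$ fluctuation is precisely the one feeding the Gaussian term $\mathrm{IV}_n$, the averaged analogue of Proposition \ref{prop:part4_VaR}. The key point to establish is that the first-order fluctuation of the super-quantile numerator $\tilde h(y_n;x_n)$ (which, through $\tilde h(y;x)=(V(y/x)-V(1/x))/a(x)$, is itself an average of $h(\cdot;x_n)$ over $(0,y_n)$ and $(0,1)$) coincides with that of $D_n$ up to $o_p(m_n^{-1/2})$ uniformly, so that $\mathrm{II}_n+\mathrm{IV}_n=\log\bigl(\tilde h(y_n;x_n)/\tilde h_\gamma(c)\bigr)$ splits into a purely deterministic bias (this proposition) and a purely Gaussian variance. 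Concretely I would use the exponential-spacing representation together with the integrability condition $\int_0^1 h_\gamma^2(w)\,dw<\infty$ (equivalently $\gamma<1/2$) to show that the averaged empirical spacings approximate their integrals with matching leading-order fluctuations; note that the naive bound $m_n^{1/2}\log(\tilde h_\gamma(y_n)/D_n)=O_p(t^{-1/2})$ is not controlled by the weight $t^\epsilon$ with $\epsilon<1/2$, so mere boundedness will not do and genuine cancellation against the numerator is required.

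Finally I would promote the pointwise statement to the uniform one with weight $t^\epsilon$. The blow-up as $t\to0$ stems from the slower convergence of $y_n\to c$ and $x_n$ when $j=\lceil tm_n\rceil$ is small (fluctuations of order $j^{-1/2}=(tm_n)^{-1/2}$), and is absorbed by $t^\epsilon$ exactly as in Segers, now applied to the averaged kernels; Potter-type bounds for $A\in RV_\rho$ keep $t^{-\rho}$ (bounded on $(0,1]$ since $\rho\le0$) under control and make $\tilde H_{\gamma,\rho}/\tilde h_\gamma$ majorizable near the endpoints. The main obstacle, as indicated, is the second summand: showing rigorously and uniformly in $t$ down to the boundary that the discretization/averaging gap carries no order-$m_n^{-1/2}$ deterministic bias and is entirely transferred to $\mathrm{IV}_n$.
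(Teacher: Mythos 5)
Your first summand is handled essentially as the paper handles the whole proposition: the paper factorizes $m_n^{1/2}\,\mathrm{II}_n = P_n\,Q_n(t)\,R_n(t,c)$ with $P_n = d\,m_n^{1/2}A(n/m_n)\to r$ by Condition \ref{cond2}, $Q_n(t)=A(n/S_{j+1})/A(n/m_n)=t^{-\rho}+t^{-\epsilon/2}\delta_n(t)$ (quoted from Segers), and $R_n$ controlled by Lemma \ref{lemma:a1}, followed by $\tilde H_{\gamma,\rho}(S(j,c))/\tilde h_\gamma(S(j,c))=\tilde H_{\gamma,\rho}(c)/\tilde h_\gamma(c)+t^{-\epsilon/2}\eta_n(t,c)$ using $S(j,c)\to c$ uniformly and the monotonicity of $y\mapsto\tilde H_{\gamma,\rho}(y)/\tilde h_\gamma(y)$. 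Your Lemma-\ref{lemma:a1}-plus-regular-variation argument is the same in substance, though you defer the $t^\epsilon$-weighted uniformity to ``as in Segers'' where the paper states the precise $t^{-\epsilon/2}$-weighted error claims.

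The genuine gap is your second summand, and it is worth being precise about why. The paper's proof never faces that term: in the proof of Proposition \ref{prop:part2} the ratio analyzed is $\tilde h(S(j,c);n/S_{j+1})/\tilde h_\gamma(S(j,c))$, i.e.\ the denominator is the deterministic kernel at the random point $S(j,c)$, not the empirical average $D_n$; all of $D_n$ is shifted into $\mathrm{IV}_n$ and treated in Proposition \ref{prop:part4}. You instead read $\mathrm{II}_n$ literally from the displayed decomposition (denominator $D_n$) and must then control $m_n^{1/2}\log\bigl(\tilde h_\gamma(y_n)/D_n\bigr)$. You correctly observe this is $O_p(t^{-1/2})$ after scaling and cannot be absorbed by the weight $t^\epsilon$, but your proposed rescue --- that the fluctuation of the numerator $\tilde h(y_n;x_n)$ coincides with that of $D_n$ up to $o_p(m_n^{-1/2})$ --- is left unproven, and it fails under the representation the theorem actually uses: $\tilde h(y_n;x_n)=\{V(S_{\lfloor cj\rfloor+1}/n)-V(S_{j+1}/n)\}/a(n/S_{j+1})$ is random only through the two points $S_{\lfloor cj\rfloor+1}$ and $S_{j+1}$, so its first-order fluctuation is a two-point bridge functional proportional to $S(j,c)-c$, whereas $D_n-\tilde h_\gamma(c)$ is an integrated functional of the whole path $\{S_{\lfloor uj\rfloor+1}:0<u\leq1\}$ --- precisely what generates the $\Delta_c$-averaged Wiener term of Proposition \ref{prop:part4}. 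Their difference is a nondegenerate $O_p(j^{-1/2})$ Gaussian quantity, so no cancellation occurs inside your version of $\mathrm{II}_n$, and the proposition with denominator $D_n$ cannot be established along the route you sketch. (Your difficulty does expose a real bookkeeping discrepancy in the paper between the displayed $\exp\mathrm{II}_n$ and the quantity treated in its proof; but the paper's proof of the proposition as stated is the $\tilde h_\gamma(S(j,c))$-denominator argument, which your first summand reproduces and which requires no cancellation step at all.)
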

\begin{proof}
Suppose that $d\neq0$; the proof for $d=0$ is simpler. We have, abbreviating $j=\lceil tm_n\rceil$ and $S(j,c)=S_{\lfloor cj\rfloor+1} / S_{j+1}$,
\begin{align*}
m_n^{1/2} \text{II}_n(t,c) &= dm_n^{1/2}A(n/m_n) \cdot \frac{A(n/S_{j+1})}{A(n/m_n)} \cdot \frac{1}{dA(n/S_{j+1})} \log\biggl(\frac{\tilde{h}(S(j,c);\frac{n}{S_{j+1}})}{\tilde{h}_\gamma(S(j,c))}\biggr) \\
&= P_n \cdot Q_n(t) \cdot R_n(t,c).
\end{align*}
By assumption, $\lim_{n\rightarrow\infty} P_n = r$. Next, we claim that
\[
    Q_n(t) = t^{-\rho} + t^{-\epsilon/2} \delta_n(t),
\]
\[
    R_n(t,c) = \frac{\tilde{H}_{\gamma,\rho}(S(j,c))}{\tilde{h}_\gamma(S(j,c))} + \epsilon_n(t,c),
\]
\[
    \frac{\tilde{H}_{\gamma,\rho}(S(j,c))}{\tilde{h}_\gamma(S(j,c))} = \frac{\tilde{H}_{\gamma,\rho}(c)}{\tilde{h}_\gamma(c)} + t^{-\epsilon/2}\eta_n(t,c),
\]
where $\delta_n(t), \ \epsilon_n(t,c)$, and $\eta_n(t,c)$ converge to 0 as $n\to\infty$ uniformly in $0<t\leq1$ and $c_0\leq c<1$. Since $\rho\leq0$ and the function $0<y<1 \mapsto \tilde{H}_{\gamma,\rho}(y)/\tilde{h}_\gamma(y)$ is decreasing and converges to 0 as $y\to1$, this claim is sufficient to prove the proposition.

The claim for $\delta_n(t)$ is proved by \cite{segers2005generalized}. The claim for $\epsilon_n(t,c)$ follows directly from Lemma \ref{lemma:a1}. Since $S(j,c)\rightarrow c$ as $j\rightarrow\infty$ uniformly in $c_0\leq c\leq1$, the claim for $\eta_n(t,c)$ follows easily.
\end{proof}

\begin{prop} \label{prop:part4}
Let $0<\epsilon<1/2$ and $0<c_0\leq c_n<1$ be such that $m_n^\eta(1-c_n)\rightarrow\infty$ as $n\rightarrow\infty$ for some $0<\eta<\epsilon/(2+4\epsilon)$. Then wp1,
\[
    \sup_{0<t\leq1,\ c_0\leq c\leq c_n} t^{1/2+\epsilon} \left|m_n^{1/2}\text{IV}_n(t) - \biggl[\frac{t^\gamma \Delta_c \{t^{-\gamma-1}W_n(t)\}}{\tilde{h}_\gamma(c)} - \gamma\frac{W_n(t)}{t}\biggr]\right| \rightarrow 0, 
\]
\end{prop}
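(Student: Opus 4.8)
The plan is to reduce $m_n^{1/2}\text{IV}_n$ to a linear functional of the partial‑sum fluctuations and then pass, via the strong approximation, to the stated Gaussian object. Write $S(j,u):=S_{\lfloor uj\rfloor+1}/S_{j+1}$ and
\[
    N_n(t,c):=\frac{1}{c}\int_0^c h_\gamma(S(j,u))\,du-\int_0^1 h_\gamma(S(j,u))\,du,
\]
so that $\exp\text{IV}_n=N_n(t,c)/\tilde{h}_\gamma(c)$ with $j=\lceil tm_n\rceil$. Since $S(j,u)\to u$ as $j\to\infty$, we have $N_n\to\tilde{h}_\gamma(c)$; I would therefore expand $m_n^{1/2}\text{IV}_n=m_n^{1/2}\log\{1+(N_n-\tilde{h}_\gamma(c))/\tilde{h}_\gamma(c)\}$ and keep the linear term $m_n^{1/2}(N_n-\tilde{h}_\gamma(c))/\tilde{h}_\gamma(c)$. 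The hypothesis $m_n^\eta(1-c_n)\to\infty$ enters here: because $\tilde{h}_\gamma(c)$ vanishes at rate $1-c$ as $c\uparrow 1$, it guarantees $m_n^{-1/2}/\tilde{h}_\gamma(c_n)\to 0$, so the quadratic part of the logarithm remains negligible after multiplication by $m_n^{1/2}$ and the weight $t^{1/2+\epsilon}$.

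The core step is a uniform Gaussian approximation of the numerator fluctuation. Writing
\[
    N_n(t,c)-\tilde{h}_\gamma(c)=\frac{1}{c}\int_0^c\{h_\gamma(S(j,u))-h_\gamma(u)\}\,du-\int_0^1\{h_\gamma(S(j,u))-h_\gamma(u)\}\,du,
\]
I would Taylor‑expand the integrand using $h_\gamma'(u)=-u^{-\gamma-1}$, namely $h_\gamma(S(j,u))-h_\gamma(u)=-u^{-\gamma-1}(S(j,u)-u)+\tfrac12 h_\gamma''(\xi)(S(j,u)-u)^2$. On the probability space carrying the embedding behind \eqref{eqn: Reiss approx2} (the same one used in Propositions \ref{prop:part3} and \ref{prop:part4_VaR}), linearising the ratio $S_{\lfloor uj\rfloor+1}/S_{j+1}$ around its mean gives, uniformly in the relevant ranges,
\[
    m_n^{1/2}(S(j,u)-u)\approx\frac{1}{t}\{u\,W_n(t)-W_n(ut)\},
\]
with $W_n(t)=-m_n^{-1/2}W(tm_n)$. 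The remainder is $O((S(j,u)-u)^2)$, and $m_n^{1/2}$ times it vanishes; the delicate issue is uniformity near $u\downarrow 0$ and, in the first integral, $u\uparrow c$, where $h_\gamma''(\xi)=(\gamma+1)\xi^{-\gamma-2}$ is large and $j=\lceil tm_n\rceil$ may be small. This is exactly the situation controlled by the weighted maximal inequalities in Segers' proof of Proposition \ref{prop:part4_VaR}, which I would import; the weight $t^{1/2+\epsilon}$ is what tames the small‑$t$ contribution.

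It then remains to integrate the leading term over $u$ and simplify. Using $\int_0^c u^{-\gamma}\,du=c^{1-\gamma}/(1-\gamma)$, the pieces proportional to $W_n(t)$ combine to $\{(1-c^{-\gamma})/(1-\gamma)\}\,W_n(t)/t$, while the pieces containing $W_n(ut)$ assemble, by the definition \eqref{eqn:delta_c} of $\Delta_c$, into $t^\gamma\Delta_c\{t^{-\gamma-1}W_n(t)\}$. Dividing by $\tilde{h}_\gamma(c)$ and using the identity $c^{-\gamma}-1=\gamma(1-\gamma)\tilde{h}_\gamma(c)$ (with the usual limit at $\gamma=0$) collapses the first coefficient to $-\gamma$, producing
\[
    m_n^{1/2}\text{IV}_n(t)\approx\frac{t^\gamma\Delta_c\{t^{-\gamma-1}W_n(t)\}}{\tilde{h}_\gamma(c)}-\gamma\frac{W_n(t)}{t},
\]
as claimed. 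Interchanging $m_n^{1/2}$, the limit, and the $u$‑integral is justified by dominated convergence once one notes that, by the law of the iterated logarithm for $W$, the approximant $u^{-\gamma-1}W_n(ut)$ is bounded near $u=0$ by a constant times $u^{-\gamma-1/2+\delta}$, which is integrable precisely when $\gamma<1/2$; this integrability, together with the boundary uniformity discussed above, is the main obstacle, and is the only place the restriction $\gamma<1/2$ is needed.
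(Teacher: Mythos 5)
Your leading-order algebra is correct and coincides with the paper's: the linearization $m_n^{1/2}(S(j,u)-u)\approx t^{-1}\{uW_n(t)-W_n(ut)\}$, the assembly of the $W_n(ut)$ pieces into $t^\gamma\Delta_c\{t^{-\gamma-1}W_n(t)\}$ via \eqref{eqn:delta_c}, and the collapse of the $W_n(t)$ coefficient to $-\gamma$ through $c^{-\gamma}-1=\gamma(1-\gamma)\tilde{h}_\gamma(c)$ all check out. The genuine gap is the Taylor expansion $h_\gamma(S(j,u))-h_\gamma(u)=-u^{-\gamma-1}(S(j,u)-u)+\tfrac12 h_\gamma''(\xi)(S(j,u)-u)^2$ inside the $u$-integral. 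For $u<1/j$ (with $j=\lceil tm_n\rceil$) one has $\lfloor uj\rfloor=0$, so $S(j,u)=S_1/S_{j+1}$ is of order $1/j$ and does \emph{not} approach $u$; hence $S(j,u)-u$ stays bounded away from $0$ as $u\downarrow 0$, the linear term behaves like $u^{-\gamma-1}\cdot O(1/j)$ and is non-integrable near $u=0$ for $\gamma\geq 0$, and the Lagrange remainder, with $h_\gamma''(\xi)=(1+\gamma)\xi^{-\gamma-2}$ and $\xi$ possibly of order $u$, is worse still. The true integrand is integrable (since $h_\gamma(S(j,u))$ is bounded there and $\int_0 h_\gamma(u)\,du<\infty$ for $\gamma<1$) only because the two divergent Taylor pieces cancel jointly, so your plan of ``keep the linear term, show the remainder vanishes'' cannot be executed term by term. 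No weighted maximal inequality repairs this: it is a deterministic non-integrability of the expansion, not a fluctuation problem, and the weight $t^{1/2+\epsilon}$ sits on the outer variable $t$, doing nothing for the inner $u$-integral. The paper avoids the expansion entirely via the exact functional equation $h_\gamma(xy)=x^{-\gamma}h_\gamma(y)+h_\gamma(x)$, which rewrites the integrand as $u^{-\gamma}h_\gamma(S(j,u)/u)$ --- bounded as $u\downarrow 0$ and integrable for $\gamma<1$ --- before substituting the Hungarian-construction expansion $h_\gamma(S(j,c)/c)=W(j+1)/(j+1)-W(\lfloor cj\rfloor+1)/(\lfloor cj\rfloor+1)+O(j^{-1}\log j)$.

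A second, lesser deficiency is architectural. The paper splits the $t$-range at $j_n/m_n$ with $j_n=\lceil m_n^\nu\rceil$ and $4\eta<\nu<2\epsilon/(1+2\epsilon)$, bounding four suprema $R_{n,1},\dots,R_{n,4}$ separately; for $t\leq j_n/m_n$ no linearization is attempted at all --- only crude $O(\log m_n)$ bounds on $\max_{1\le j\le j_n}|\log\tilde{h}_\gamma(S(j,c))|$ and on $\sup_{c\le c_n}|\log\tilde{h}_\gamma(c)|$, killed by the weight. Your appeal to ``importing the weighted maximal inequalities'' from Proposition \ref{prop:part4_VaR} leaves this splitting unspecified, and your account of where the hypothesis $m_n^\eta(1-c_n)\to\infty$ enters is too weak: you invoke it only to get $m_n^{-1/2}/\tilde{h}_\gamma(c_n)\to 0$ for the quadratic term of the logarithm, whereas the binding uses in the paper are (i) that the Hungarian remainder $O(\log j/(j(1-c)^2))$ be $o(j^{-1/2})$ uniformly along the moving index, i.e.\ $(1-c_n)^2 j_n^{1/2}/\log j_n\to\infty$, which forces $\nu>4\eta$, and (ii) that $R_{n,2}=O\bigl(j_n^\epsilon(\log j_n)^{1/2}/(m_n^\epsilon(1-c_n))\bigr)\to 0$, which forces $\nu<2\epsilon/(1+2\epsilon)$. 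So while your target identity and the role of $\gamma<1/2$ for integrability of $u^{-\gamma-1}W_n(ut)$ near $u=0$ are correctly identified, the proof as proposed fails at the $u\downarrow 0$ expansion and leaves the small-$t$ regime and the $c_n$-dependence unaccounted for.
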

\begin{proof}
Let $4\eta<\nu<2\epsilon/(1+2\epsilon)$ and $j_n=\lceil k_n^\nu\rceil$. Define $\tilde{\Delta}_x\{\tilde{W}(c)\}=\bigl(\frac{W(x)}{x}-\frac{W(cx)}{cx}\bigr)c^{-\gamma}$ and $\tilde{\Delta}_x\{\tilde{W}_1(c)\}=\bigl(\frac{W(x+1)}{x+1}-\frac{W(\lfloor cx\rfloor+1)}{\lfloor cx\rfloor+1}\bigr)c^{-\gamma}$.
Our quantity of interest is bounded by the maximum of 
\[
    \sup_{0<t\leq j_n/m_n, \ c_0\leq c\leq c_n} t^{1/2+\epsilon} \left|\text{IV}_n(t,c)\right|,
\]
\[
    \sup_{0<t\leq j_n/m_n, \ c_0\leq c\leq c_n} \frac{(tm_n)^{1/2+\epsilon}}{m_n^\epsilon \tilde{h}_\gamma(c)} \left|\frac{1}{c}\int_0^c \tilde{\Delta}_j\{\tilde{W}_1(u)\} du - \int_0^1 \tilde{\Delta}_j\{\tilde{W}_1(u)\} du \right|,
\]
\[
    \sup_{j_n/m_n\leq t\leq1,\ c_0\leq c\leq c_n} m_n^{-\epsilon}(tm_n)^{1/2+\epsilon} \left|\text{IV}_n(t,c) - \frac{1}{\tilde{h}_\gamma(c)}\biggl[\frac{1}{c}\int_0^c \tilde{\Delta}_j\{\tilde{W}_1(u)\} du - \int_0^1 \tilde{\Delta}_j\{\tilde{W}_1(u)\} du\biggr]\right|,
\]
\begin{align*}
    \sup_{0<t\leq1,\ c_0\leq c\leq c_n} & \frac{(tm_n)^{1/2+\epsilon}}{m_n^\epsilon\tilde{h}_\gamma(c)} \bigg|\biggl[\frac{1}{c}\int_0^c \tilde{\Delta}_j\{\tilde{W}_1(u)\} du - \int_0^1 \tilde{\Delta}_j\{\tilde{W}_1(u)\} du\biggr] \\
    & \quad - \biggl[\frac{1}{c}\int_0^c \tilde{\Delta}_{tm_n}\{\tilde{W}(u)\} du - \int_0^1 \tilde{\Delta}_{tm_n}\{\tilde{W}(u)\} du\biggr]\bigg|,
\end{align*}
which we call $R_{n,1},\ R_{n,2},\ R_{n,3}$, and $R_{n,4}$ respectively. Recall that $\{\xi_i\}_{i=1}^j$ are $j$ independent standard exponential r.v.'s. We will use the following conclusions proved by \cite{segers2005generalized} from (\ref{eqn:serger1}) to (\ref{eqn:serger5}):
\begin{equation} \label{eqn:serger1}
    \max_{1\leq j \leq j_n} \left|\log{(\xi_{j+1}/j)}\right| = O(\log{j_n}) = O(\log{m_n}) \quad n\to\infty,
\end{equation}
\begin{equation} \label{eqn:serger3}
    \sup_{0<t\leq j_n/m_n, \ c_0\leq c\leq c_n} \frac{(tm_n)^{1/2+\epsilon}}{m_n^\epsilon |h_\gamma(c)|} \left|\tilde{W}_j(c) \right| = O\biggl(\frac{j_n^\epsilon(\log j_n)^{1/2}}{k_n^\epsilon(1-c_n)}\biggr), \quad n\to\infty,
\end{equation}
and
\begin{equation} \label{eqn:serger5}
    \sup_{x>0,\ c_0\leq c\leq1} x^{1/2+\epsilon} \left|\frac{W(\lfloor c\lceil x\rceil\rfloor+1)}{\lfloor c\lceil x\rceil\rfloor+1} - \frac{W(cx)}{cx}\right| < \infty \quad \text{wp1}.
\end{equation}
\\ \\
Let $S(j,c)=S_{\lfloor cj\rfloor+1} / S_{j+1}$. First of all, we have
\[
    R_{n,1} \leq \frac{j_n^{1/2+\epsilon}}{k_n^\epsilon} \sup_{c_0\leq c\leq c_n} \biggl\{\max_{1\leq j \leq j_n} \left|\log\Bigl[\frac{1}{c} \int_0^c h_\gamma(S(j,u)) du - \int_0^1 h_\gamma(S(j,u)) du\Bigr] \right| + \left|\log\tilde{h}_\gamma(c) \right|\biggr\}.
\]
Now on the one hand, $\sup\{S(j,c):j\geq1,c_0\leq c<1\}<\infty$, while on the other hand $S(j,c)\geq1+\xi_{j+1}/S_j$ for all $0<c<1$. Hence as $n\to\infty$, the asymptotic equivalence $\frac{1}{x}\int_0^x h_\gamma(y) dy \sim x-1$ for $x\to1$ implies
\[
    \max_{1\leq j \leq j_n} \left|\log{\tilde{h}_\gamma(S(j,c))}\right| = O\left(\max_{j=1,\ldots,j_n} \left|\log{(\xi_{j+1}/j)}\right|\right).
\]
From \eqref{eqn:serger1}, it can implied that
\[
    \max_{1\leq j \leq j_n} \left|\log{\tilde{h}_\gamma(S(j,c))}\right| = O(\log m_n), \quad n\to\infty.
\]
Besides, the same order of magnitude is also found in
\[
    \sup_{c_0 \leq c \leq c_n} |\log \tilde{h}_\gamma(c)| = O(\log m_n), \quad n\to\infty.
\]
Together, we have wp1,
\[
    R_{n,1} = O\biggl(\frac{j_n^{1/2+\epsilon} \log m_n}{m_n^\epsilon}\biggr) = O\biggl(\frac{\log m_n}{m_n^{\epsilon-\nu(1/2+\epsilon)}}\biggr), \quad n\to\infty,
\]
by the choice of $\nu$.

Secondly, we can get
\[
    R_{n,2} = O\biggl(\frac{j_n^\epsilon(\log j_n)^{1/2}}{k_n^\epsilon(1-c_n)}\biggr), \quad n\to\infty,
\]
from (\ref{eqn:serger3}). However, since $\epsilon(1-\nu) > \epsilon/(1+2\epsilon)$, we have $R_{n,2} \to 0$ by the choice of $c_n$.

Thirdly, the term $R_{n,3}$ is bounded by
\[
    \sup_{j\geq j_n, \ c_0\leq c\leq c_n} j^{1/2} \left|\text{IV}_n(t,c) - \frac{1}{\tilde{h}_\gamma(c)}\biggl[\frac{1}{c}\int_0^c \tilde{\Delta}_j\{\tilde{W}_1(u)\} du - \int_0^1 \tilde{\Delta}_j\{\tilde{W}_1(u)\} du\biggr]\right|.
\]
Since
\[
    \frac{(1-c_n)^2 j_n^{1/2}}{\log j_n} \sim \frac{(1-c_n)^2 m_n^{\nu/2}}{\nu \log m_n} \rightarrow \infty, \quad n\to\infty,
\]
our job for $R_{n,3}$ will be accomplished if we can show that whenever $0<c_0\leq c_j^\prime<1$ is such that $(1-c_j^\prime)^2j^{1/2} / \log j \rightarrow \infty$ as $j\rightarrow\infty$, then also wp1,
\[
    \sup_{c_0\leq c\leq c_j^\prime} j^{1/2} \left|\text{IV}_n(t,c) - \frac{1}{\tilde{h}_\gamma(c)}\biggl[\frac{1}{c}\int_0^c \tilde{\Delta}_j\{\tilde{W}_1(u)\} du - \int_0^1 \tilde{\Delta}_j\{\tilde{W}_1(u)\} du\biggr]\right| \rightarrow 0.
\]
Denote $S_j = j+W(j)+Z_j$. With the properties of the Hungarian construction, \cite{segers2005generalized} showed that, wp1, 
\[
    h_\gamma(\frac{S(j,c)}{c}) = \frac{W(j+1)}{j+1} - \frac{W(\lfloor cj\rfloor+1)}{\lfloor cj\rfloor+1} + O(j^{-1}\log j), \quad j\to\infty,
\]
uniformly in $c_0\leq c\leq1$. \\ \\
Thus, since the function $h_\gamma$ satisfies the functional equations
\[
    h_\gamma(xy) = x^{-\gamma}h_\gamma(y) + h_\gamma(x) \quad \text{and} \quad x^{\gamma}h_\gamma(x) = -h_\gamma(1/x), \quad \text{for } x,y>0,
\]
we can get
\begin{align*}
    & \quad \frac{1}{c} \int_0^c h_\gamma(S(j,u)) du - \int_0^1 h_\gamma(S(j,u)) du - \Bigl[\frac{1}{c} \int_0^c h_\gamma(u) du - \int_0^1 h_\gamma(u) du\Bigr]\\
    &= \frac{1}{c} \int_0^c h_\gamma(\frac{S(j,u)}{u})u^{-\gamma} du - \int_0^1 h_\gamma(\frac{S(j,u)}{u})u^{-\gamma} du \\
    &= \frac{1}{c} \int_0^c \biggl(\frac{W(j+1)}{j+1} - \frac{W(\lfloor uj\rfloor+1)}{\lfloor uj\rfloor+1}\biggr)u^{-\gamma} du  \\
    & \qquad - \int_0^1 \biggl(\frac{W(j+1)}{j+1} - \frac{W(\lfloor uj\rfloor+1)}{\lfloor uj\rfloor+1}\biggr)u^{-\gamma} du + O(j^{-1}\log j) \\
    &= -\biggl(\frac{1}{c} \int_0^c \frac{W(\lfloor uj\rfloor+1)}{\lfloor uj\rfloor+1}u^{-\gamma} du - \int_0^1 \frac{W(\lfloor uj\rfloor+1)}{\lfloor uj\rfloor+1}u^{-\gamma} du \biggr) \\
    & \qquad + \frac{c^{-\gamma}-1}{1-\gamma} \frac{W(j+1)}{j+1} + O(j^{-1}\log j), \quad j\to\infty,
\end{align*}
uniformly in $c_0\leq c\leq1$. Then, repeat the argument for $\log(1+x)$ as $x\to0$ and we can get
\begin{align*}
    \text{IV}_n(t,c) &= \log\biggl(1 + \frac{\frac{1}{c} \int_0^c h_\gamma(S(j,u)/u)u^{-\gamma} du - \int_0^1 h_\gamma(S(j,u)/u)u^{-\gamma} du}{\tilde{h}_\gamma(c)} \biggr)\\
    &= -\frac{1}{\tilde{h}_\gamma(c)}\biggl[\frac{1}{c} \int_0^c \frac{W(\lfloor uj\rfloor+1)}{\lfloor uj\rfloor+1}u^{-\gamma} du - \int_0^1 \frac{W(\lfloor uj\rfloor+1)}{\lfloor uj\rfloor+1}u^{-\gamma} du \biggr] \\
    & \qquad + \gamma\frac{W(j+1)}{j+1} + O\biggl(\frac{\log j}{j(1-c_j^\prime)^2}\biggr), \quad j\to\infty,
\end{align*}
uniformly in $c_0\leq c\leq c_j^\prime$. The remainder term is $o(j^{-1/2})$ by assumption.

Fourthly and finally, the term $R_{n,4}$ is not larger than
\begin{align*}
    \frac{1}{k_n^\epsilon \tilde{h}_\gamma(c_n)} & \sup_{x>0,\ c\in[c_0,1]} x^{1/2+\epsilon} \bigg|\biggl[\frac{1}{c}\int_0^c \biggl(\frac{W(\lceil x\rceil+1)}{\lceil x\rceil+1} - \frac{W(\lfloor u\lceil x\rceil\rfloor+1)}{\lfloor u\lceil x\rceil\rfloor+1}\biggr)u^{-\gamma} du \\
    & - \int_0^1 \biggl(\frac{W(\lceil x\rceil+1)}{\lceil x\rceil+1} - \frac{W(\lfloor u\lceil x\rceil\rfloor+1)}{\lfloor u\lceil x\rceil\rfloor+1}\biggr)u^{-\gamma} du \biggr] - \biggl[\frac{1}{c}\int_0^c \biggl(\frac{W(x)}{x} - \frac{W(ux)}{ux}\biggr) du \\
    & - \int_0^1 \biggl(\frac{W(x)}{x} - \frac{W(cx)}{cx}\biggr) du\biggr]\bigg|.
\end{align*}
Again by assumption, $k_n^\epsilon \tilde{h}_\gamma(c_n) \to \infty$, so it remains to show that the supremum is finite wp1. The latter is readily reduced to the assertion, wp1, 
\begin{equation} \label{eqn:assertion}
    \sup_{x>0,\ c_0\leq c\leq1} x^{1/2+\epsilon} \left|\frac{1}{c}\int_0^c \biggl(\frac{W(\lfloor u\lceil x\rceil\rfloor+1)}{\lfloor u\lceil x\rceil\rfloor+1} - \frac{W(ux)}{ux}\biggr) du - \int_0^1 \biggl(\frac{W(\lfloor u\lceil x\rceil\rfloor+1)}{\lfloor u\lceil x\rceil\rfloor+1} - \frac{W(ux)}{ux}\biggr) du\right| < \infty.
\end{equation}
From (\ref{eqn:serger5}) the assertion (\ref{eqn:assertion}) follows.
\end{proof}

\begin{proof} [\textbf{Proof of Theorem \ref{thm: integrand}}]
After the laborious estimates of the previous pages, the pieces of the puzzle fall together. Collect the approximations of Propositions \ref{prop:part1}, \ref{prop:part2}, \ref{prop:part3}, and \ref{prop:part4} to get
\[
    \sup_{0<t\leq1,\ c_0\leq c\leq c_n} t^{1/2+\epsilon} \left|f_n(t;c,\gamma)-\bar{g}_n(t;c,\gamma,\rho,r)\right| \rightarrow 0 \quad \text{wp1},
\]
where
\[
    \bar{g}_n(t;c,\gamma,\rho,r) = r\biggl(h_\rho(t)+t^{-\rho}\frac{\tilde{H}_{\gamma,\rho}(c)}{\tilde{h}_{\gamma}(c)}\biggr) + \frac{t^\gamma \Delta_c\{t^{-\gamma-1}W_n(t)\}}{\tilde{h}_\gamma(c)}.
\]
The functional relation
\[
    H_{\gamma,\rho}(xy) = x^{-(\gamma+\rho)}H_{\gamma,\rho}(y) + H_{\gamma,\rho}(x) + x^{-\gamma}h_\gamma(y)h_\rho(x), \quad \text{for} \ x,y>0
\]
implies
\[
    \Delta_c\{H_{\gamma,\rho}(t)\} = t^{-(\gamma+\rho)} \biggl[\frac{1}{c}\int_0^c H_{\gamma,\rho}(u)du - \int_0^1 H_{\gamma,\rho}(u)du\biggr] + t^{-\gamma} h_\rho(t) \tilde{h}_\gamma(c)
\]
which confirms $\bar{g}_n = g_n$.
\end{proof}

\section{Simulation Results} \label{appendix: simulations}
\begin{figure}[H]
    \begin{subfigure}{0.5\textwidth}
        \includegraphics[width=\linewidth]{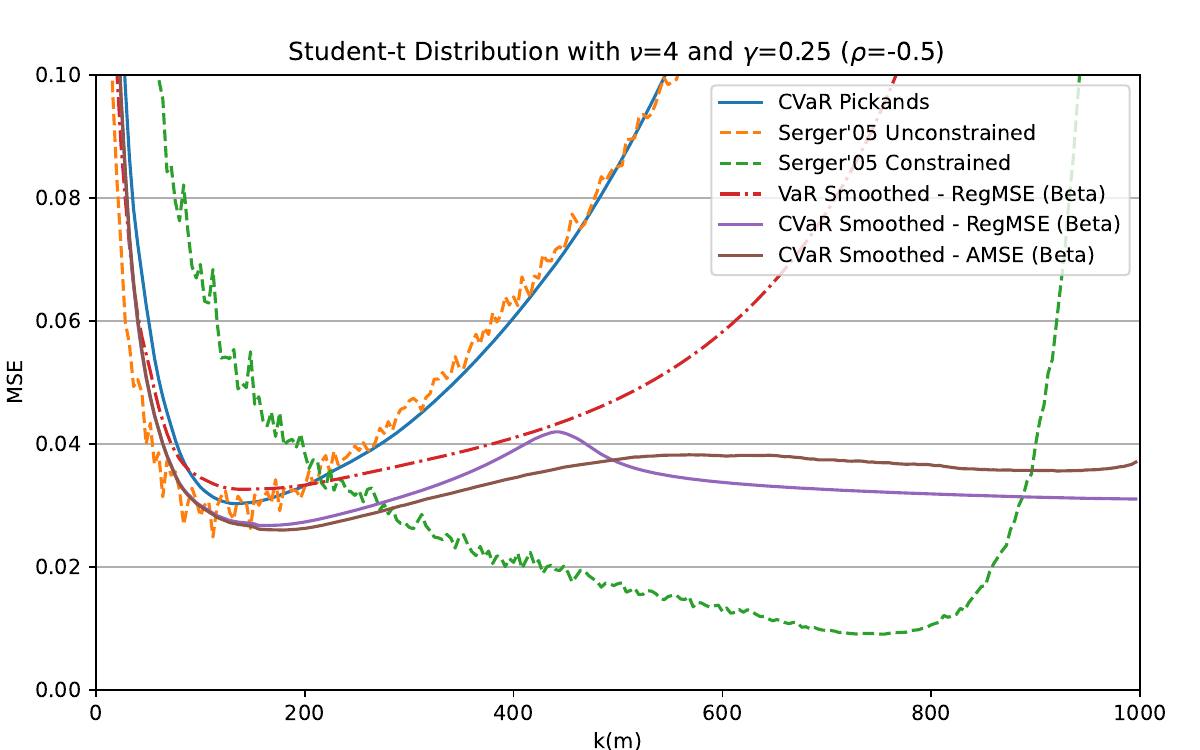}
    \end{subfigure}\hfill
    \begin{subfigure}{0.5\textwidth}
        \includegraphics[width=\linewidth]{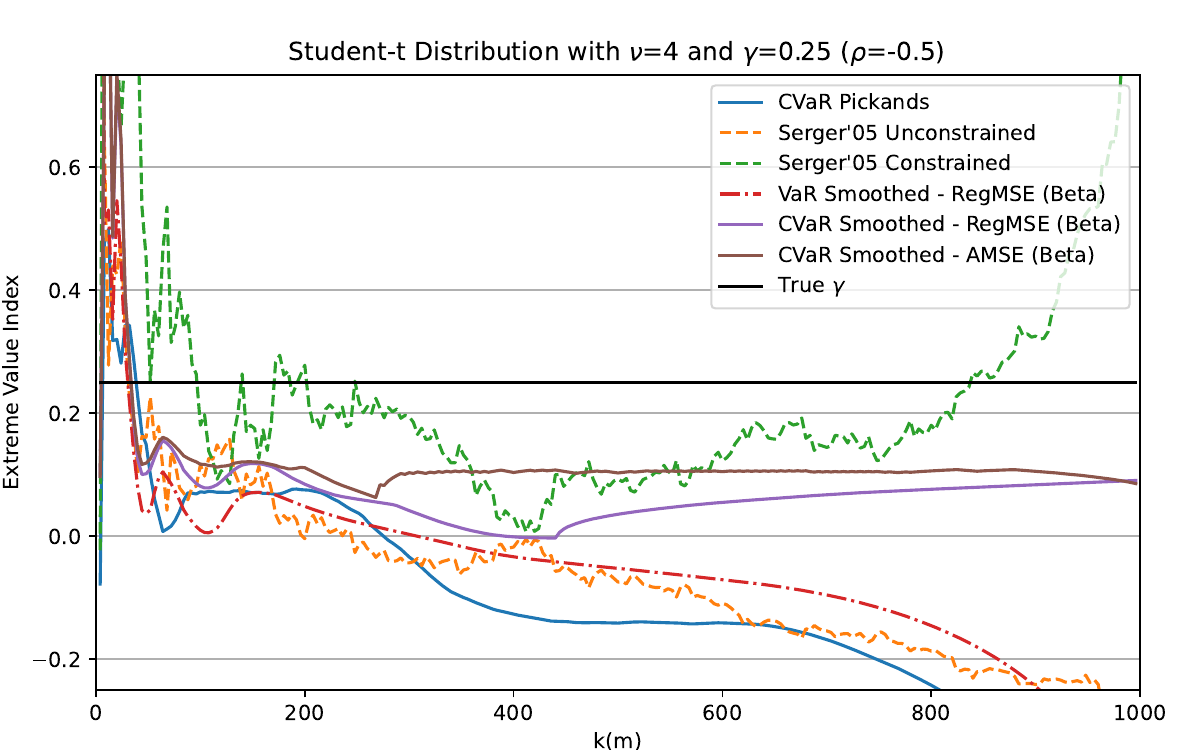}
    \end{subfigure}
    \caption{The samples are from the Student's t distribution with $\nu=4\ (\gamma=0.25)$, and others are the same as Figure \ref{fig: burr0.25}}
    \label{fig: t0.25}
\end{figure}

\begin{figure}[H]
    \begin{subfigure}{0.5\textwidth}
        \includegraphics[width=\linewidth]{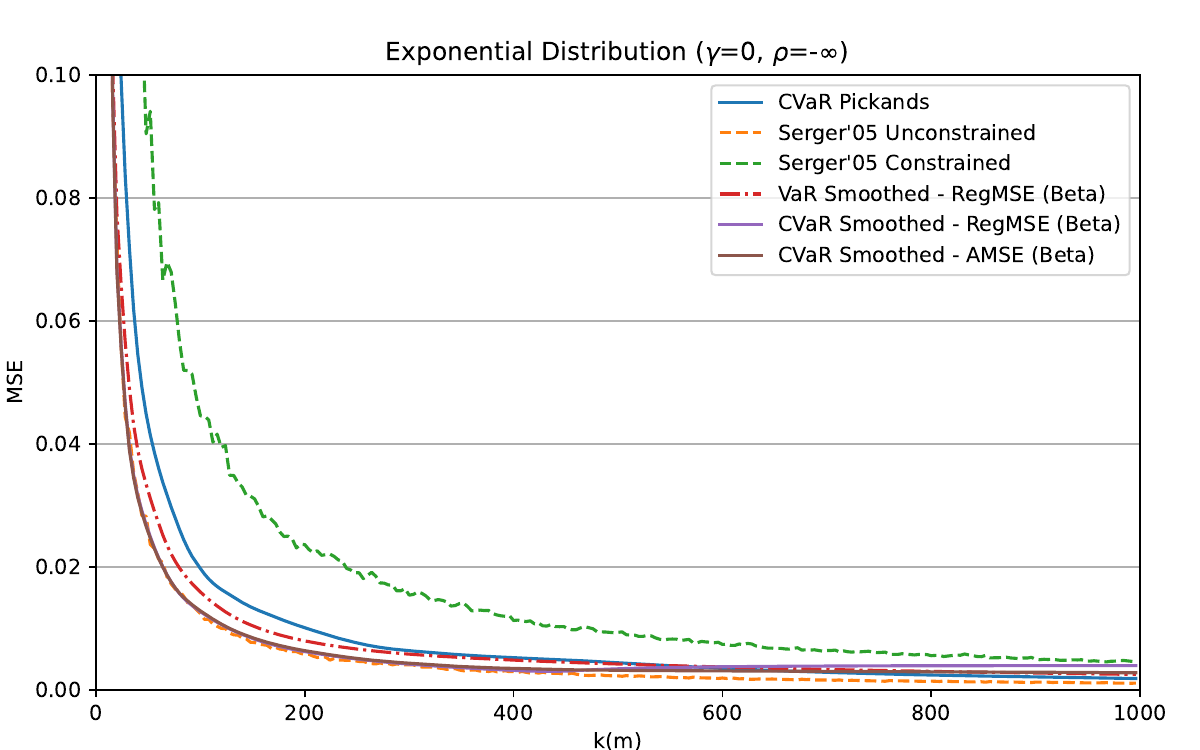}
    \end{subfigure}\hfill
    \begin{subfigure}{0.5\textwidth}
        \includegraphics[width=\linewidth]{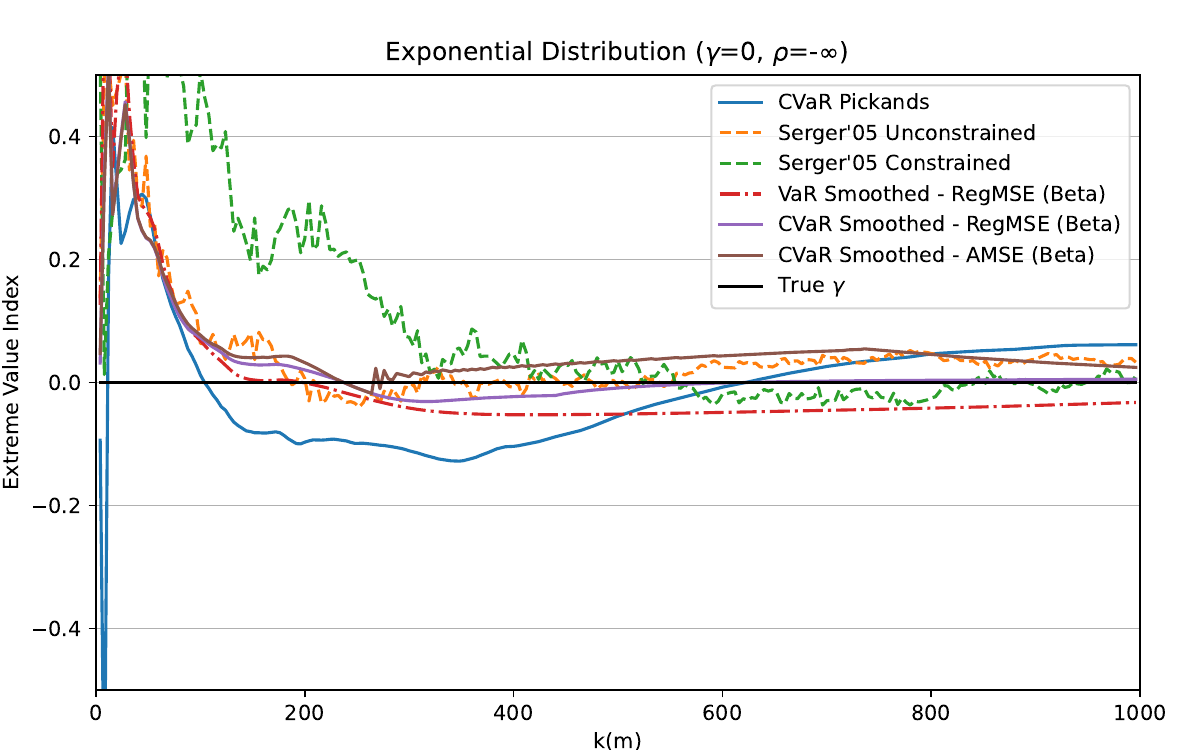}
    \end{subfigure}
    \caption{The samples are from the exponential distribution, and others are the same as Figure \ref{fig: burr0.25}}
    \label{fig: expon}
\end{figure}

\begin{figure}[H]
    \begin{subfigure}{0.5\textwidth}
        \includegraphics[width=\linewidth]{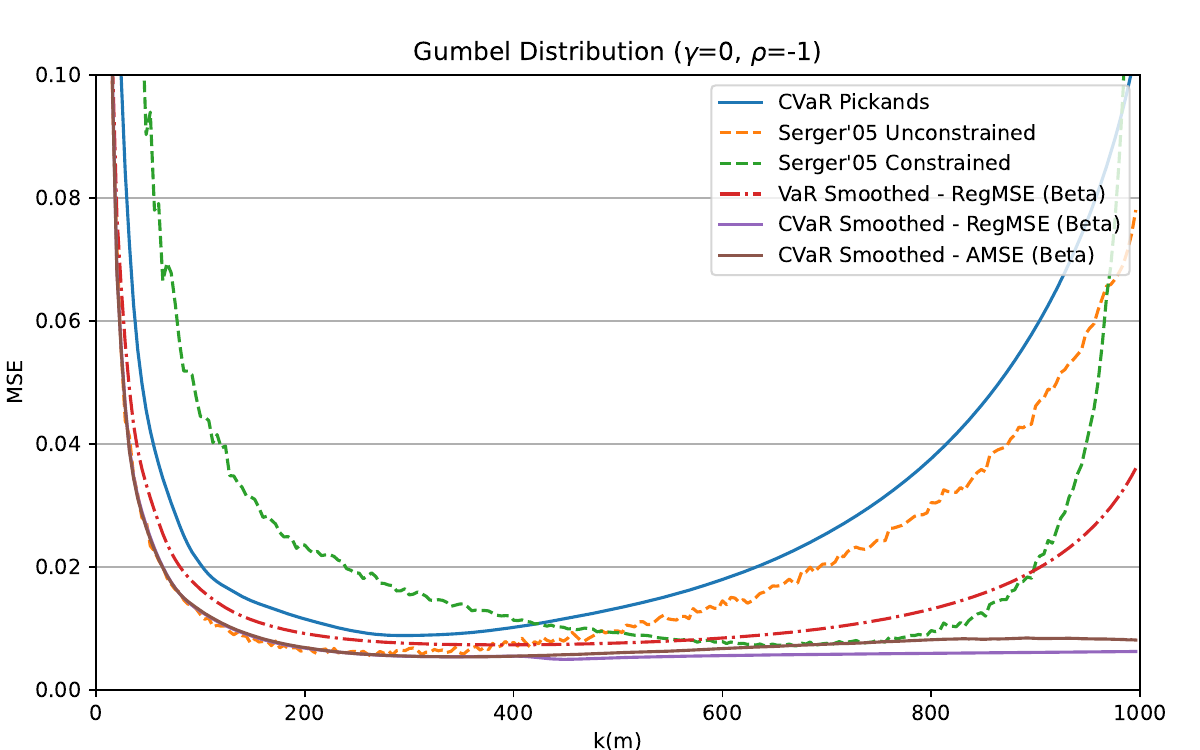}
    \end{subfigure}\hfill
    \begin{subfigure}{0.5\textwidth}
        \includegraphics[width=\linewidth]{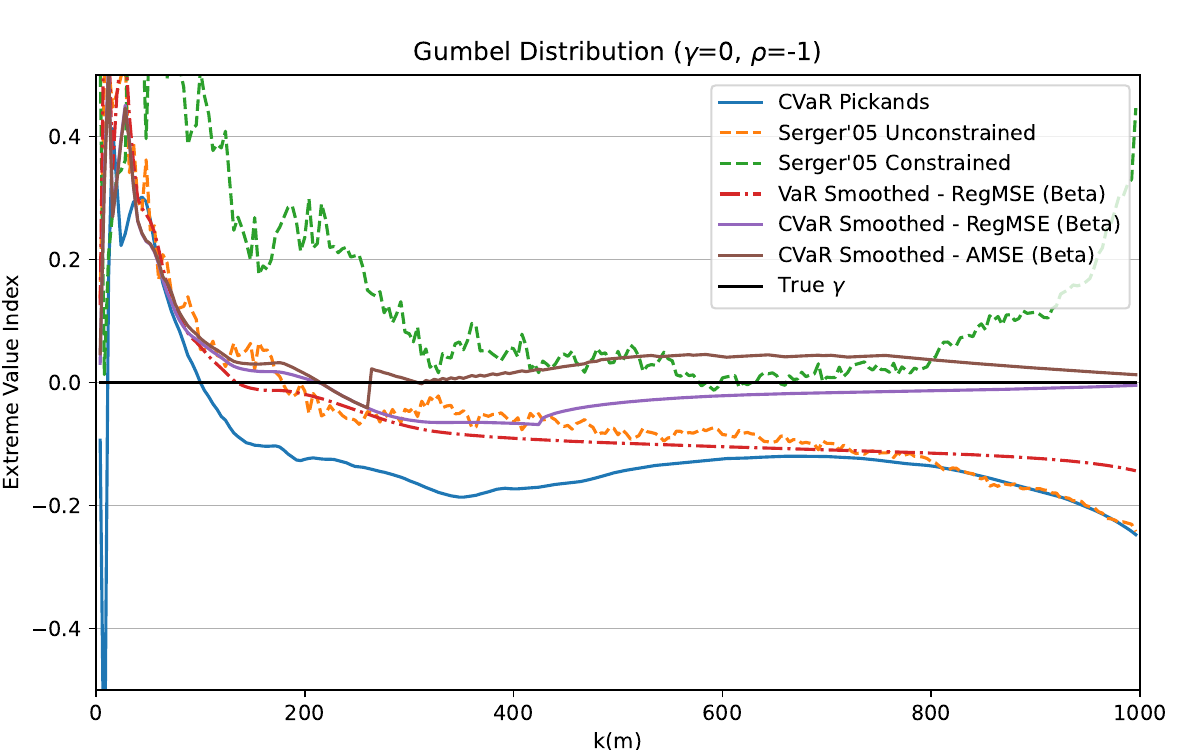}
    \end{subfigure}
    \caption{The samples are from the Gumbel distribution, and others are the same as Figure \ref{fig: burr0.25}}
    \label{fig: gumbel}
\end{figure}

\begin{figure}[H]
    \begin{subfigure}{0.5\textwidth}
        \includegraphics[width=\linewidth]{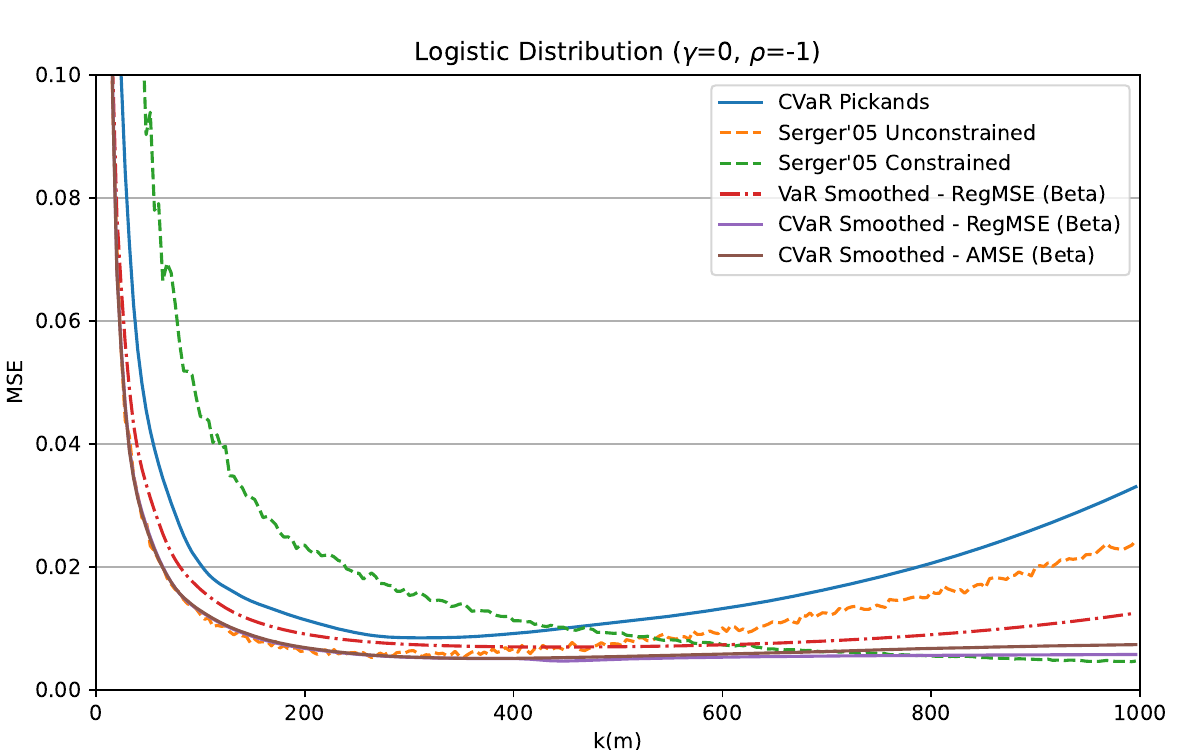}
    \end{subfigure}\hfill
    \begin{subfigure}{0.5\textwidth}
        \includegraphics[width=\linewidth]{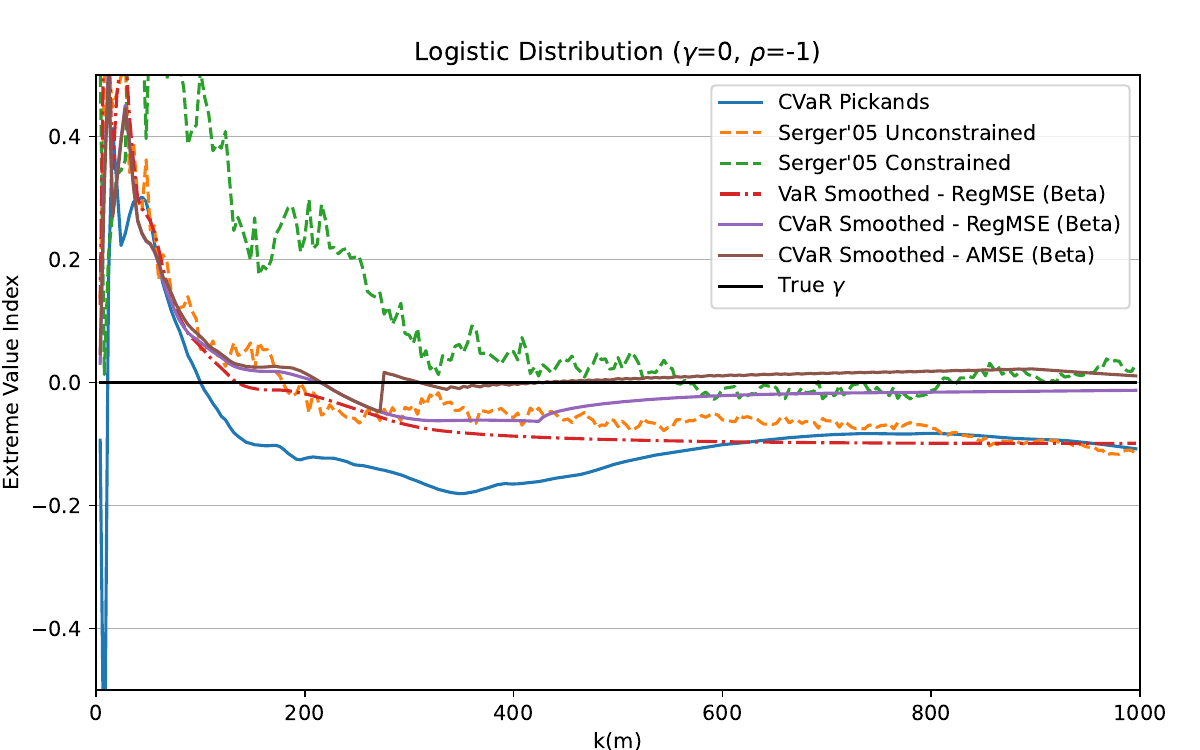}
    \end{subfigure}
    \caption{The samples are from the logistic distribution, and others are the same as Figure \ref{fig: burr0.25}}
    \label{fig: logistic}
\end{figure}

\begin{figure}[H]
    \begin{subfigure}{0.5\textwidth}
        \includegraphics[width=\linewidth]{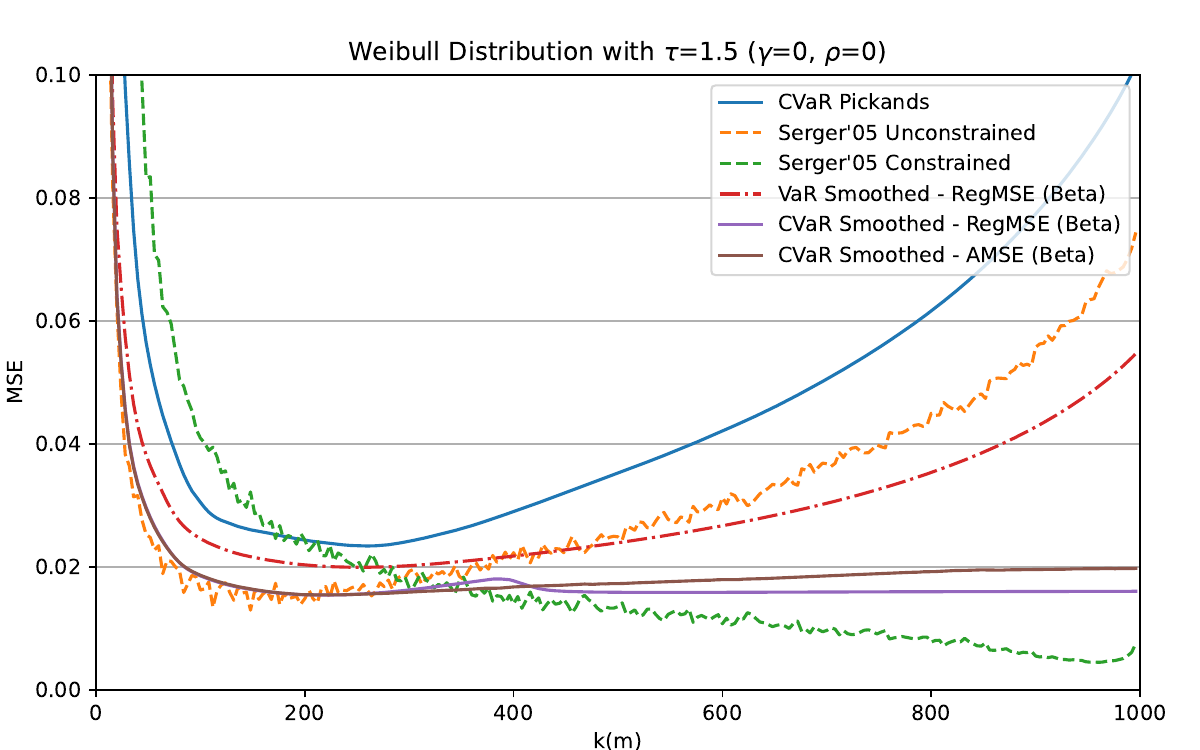}
    \end{subfigure}\hfill
    \begin{subfigure}{0.5\textwidth}
        \includegraphics[width=\linewidth]{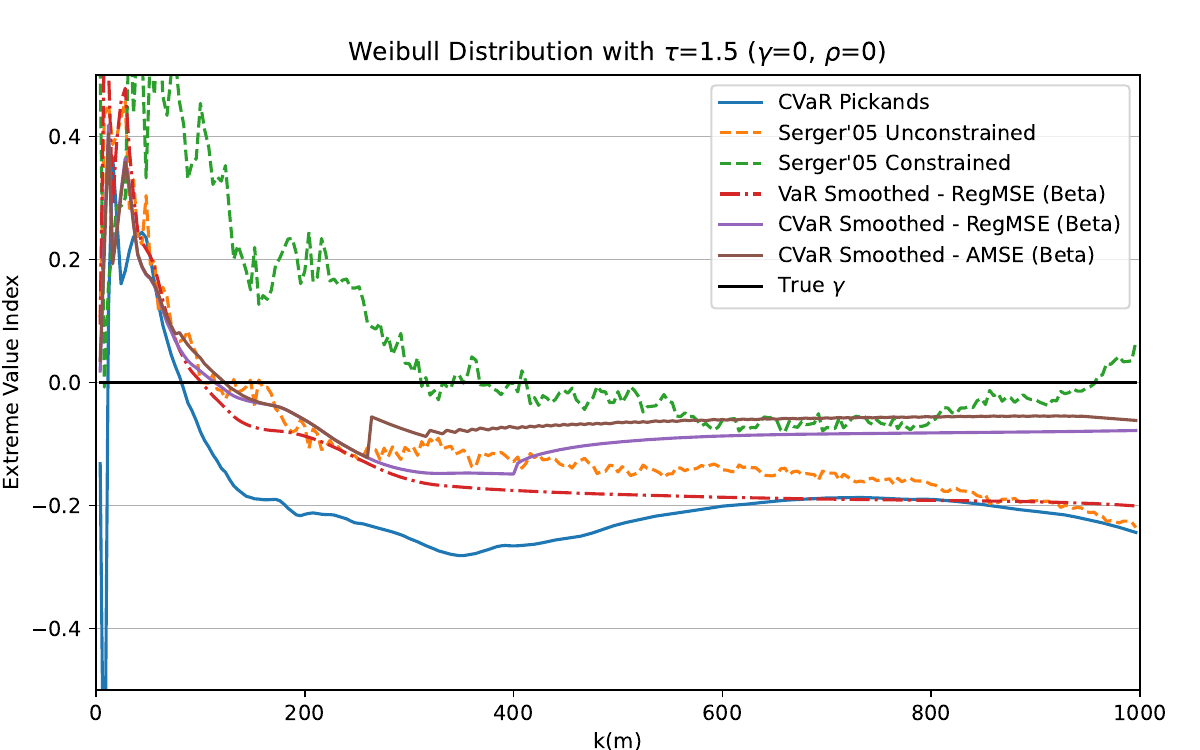}
    \end{subfigure}
    \caption{The samples are from the Weibull distribution with $\tau=1.5$, and others are the same as Figure \ref{fig: burr0.25}}
    \label{fig: weibull}
\end{figure}

\begin{figure}[H]
    \begin{subfigure}{0.5\textwidth}
        \includegraphics[width=\linewidth]{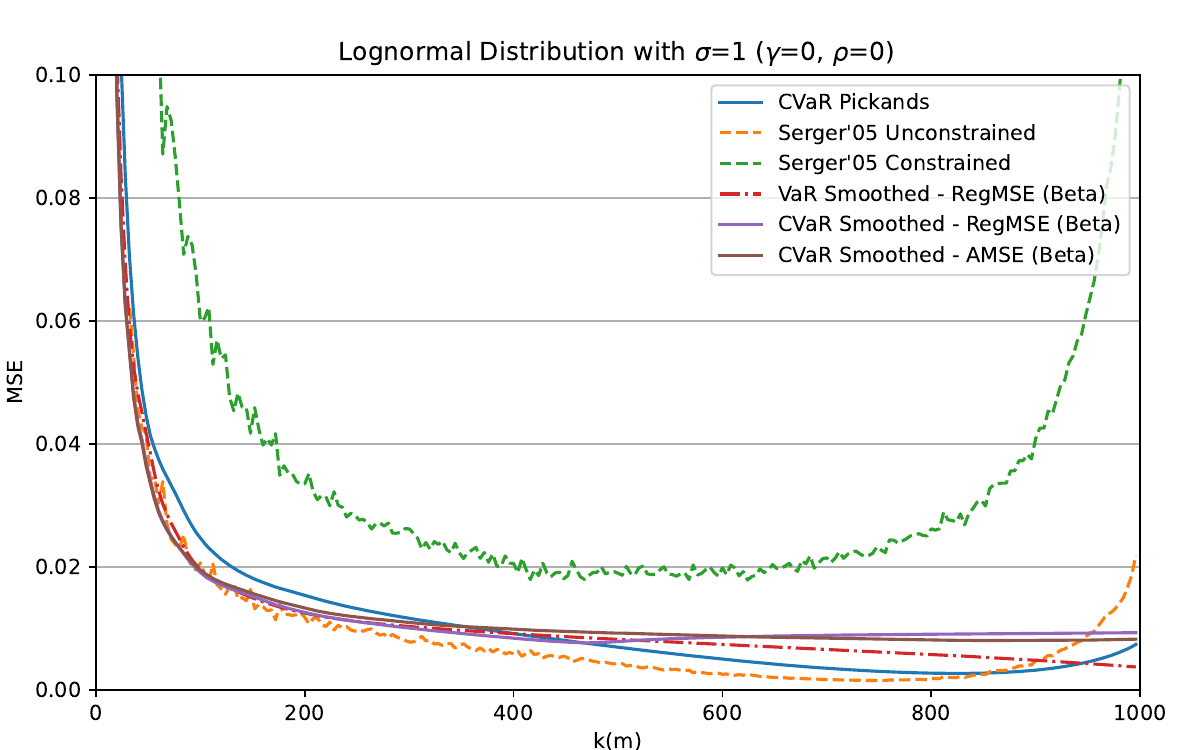}
    \end{subfigure}\hfill
    \begin{subfigure}{0.5\textwidth}
        \includegraphics[width=\linewidth]{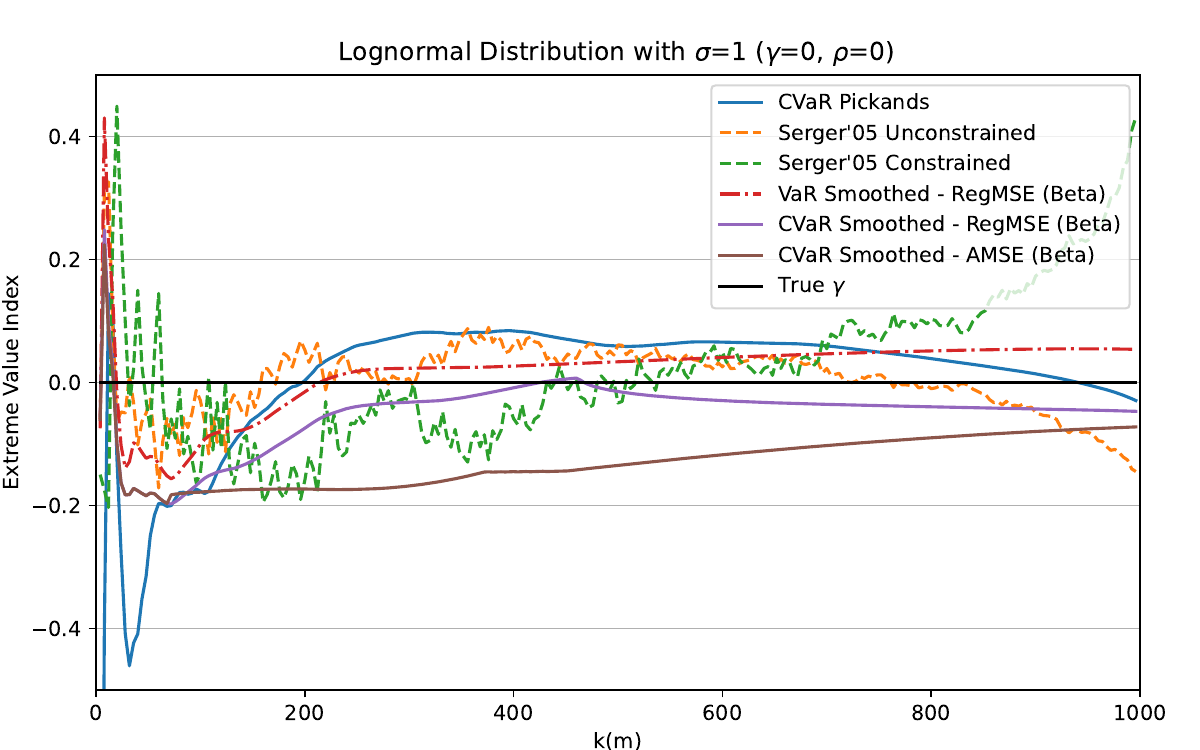}
    \end{subfigure}
    \caption{The samples are from the lognormal distribution with $\sigma=1$, and others are the same as Figure \ref{fig: burr0.25}}
    \label{fig: lognorm}
\end{figure}

\begin{figure}[H]
    \begin{subfigure}{0.5\textwidth}
        \includegraphics[width=\linewidth]{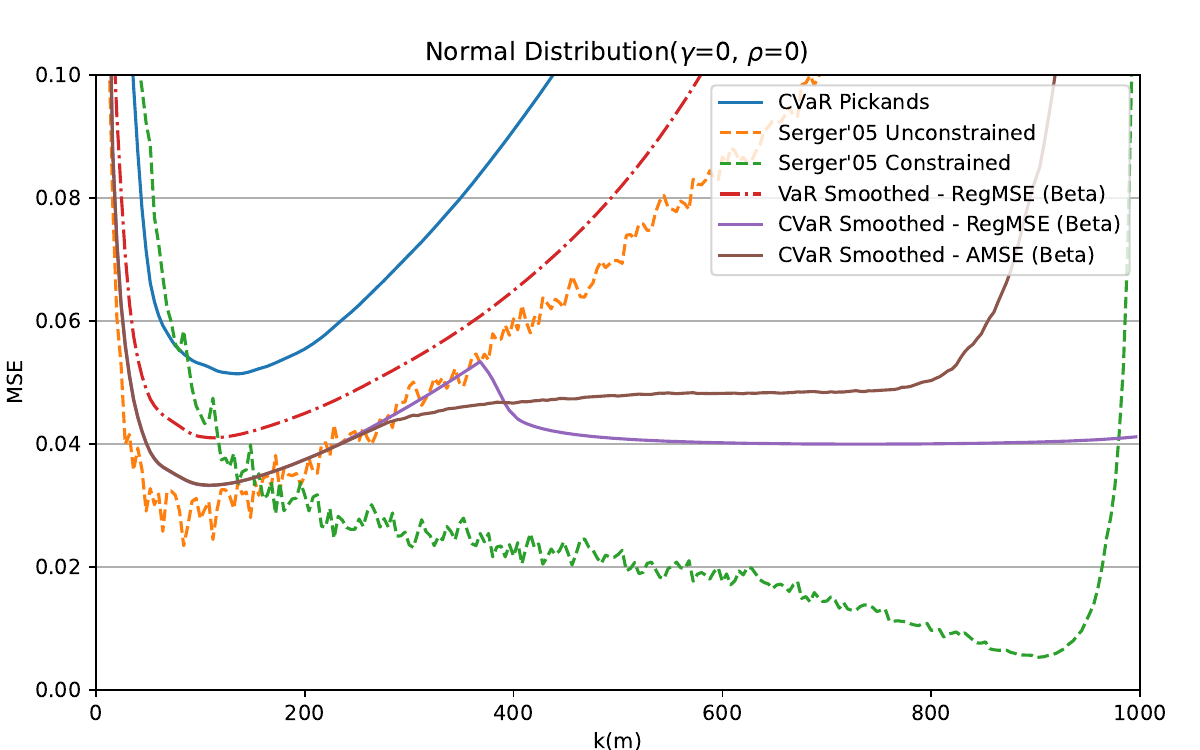}
    \end{subfigure}\hfill
    \begin{subfigure}{0.5\textwidth}
        \includegraphics[width=\linewidth]{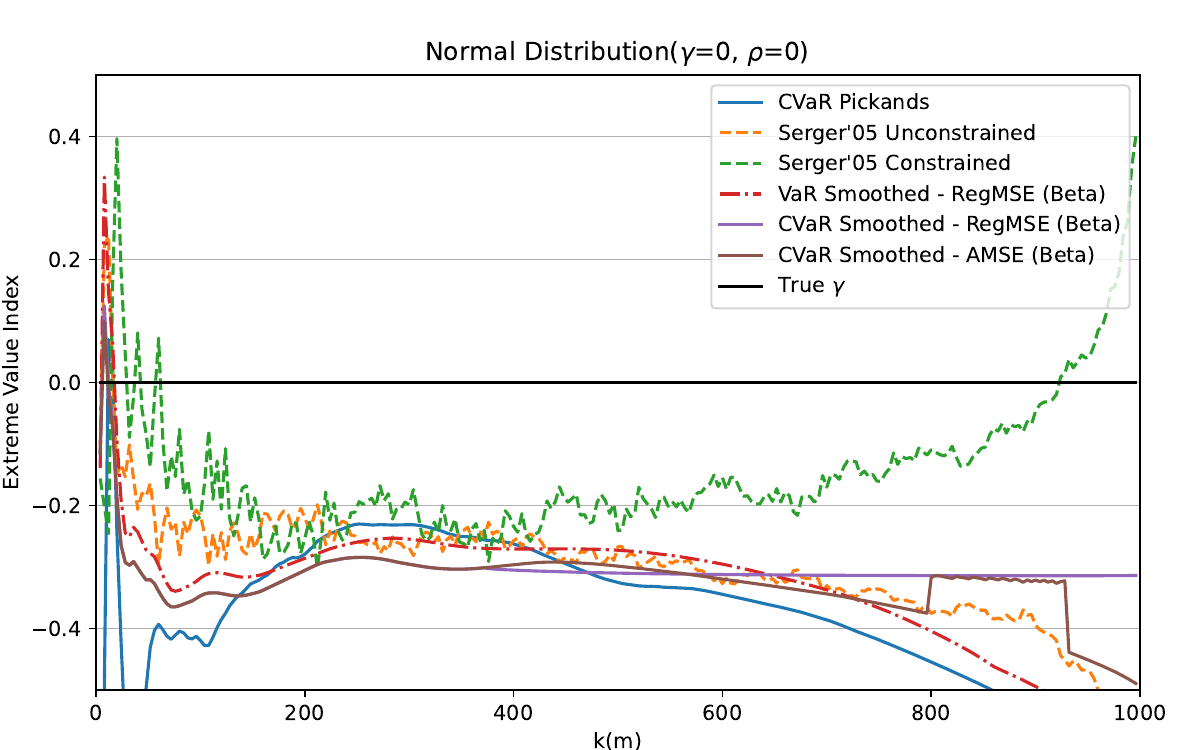}
    \end{subfigure}
    \caption{The samples are from the standard normal distribution, and others are the same as Figure \ref{fig: burr0.25}}
    \label{fig: norm}
\end{figure}

\begin{figure}[H]
    \begin{subfigure}{0.5\textwidth}
        \includegraphics[width=\linewidth]{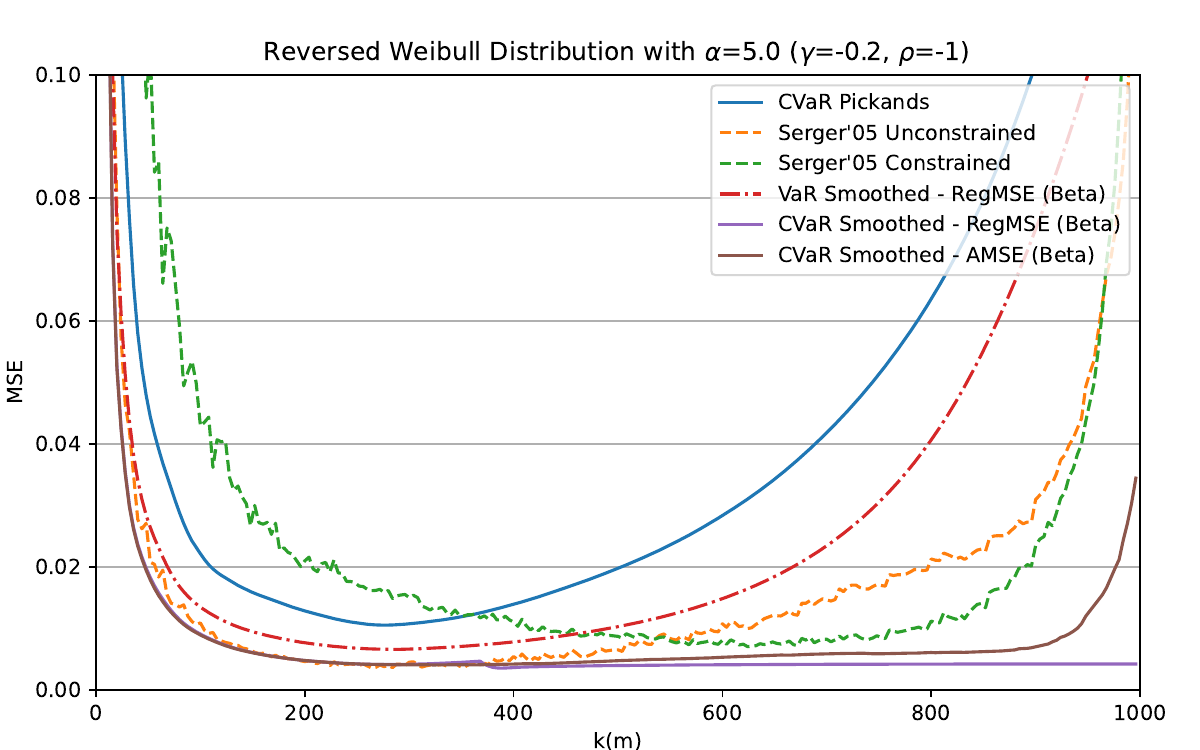}
    \end{subfigure}\hfill
    \begin{subfigure}{0.5\textwidth}
        \includegraphics[width=\linewidth]{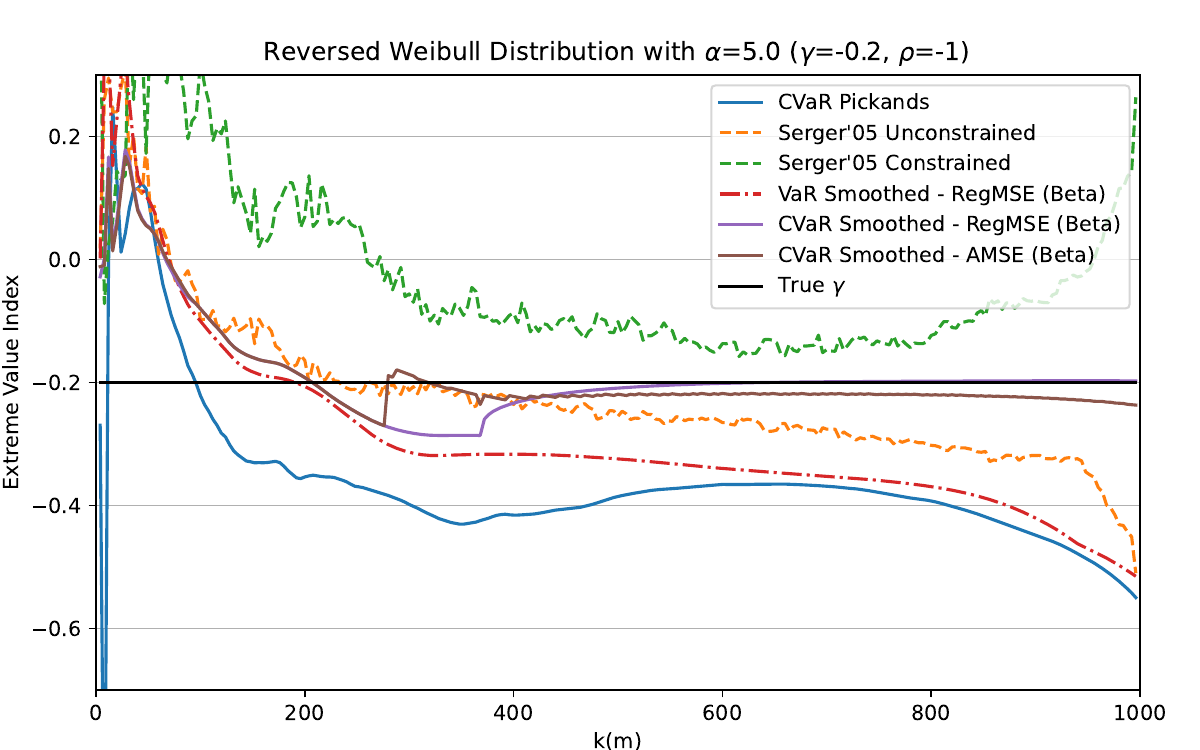}
    \end{subfigure}
    \caption{The samples are from the reversed Weibull distribution with $\gamma=-0.2$, and others are the same as Figure \ref{fig: burr0.25}}
    \label{fig: rev weibull-0.2}
\end{figure}

\begin{figure}[H]
    \begin{subfigure}{0.5\textwidth}
        \includegraphics[width=\linewidth]{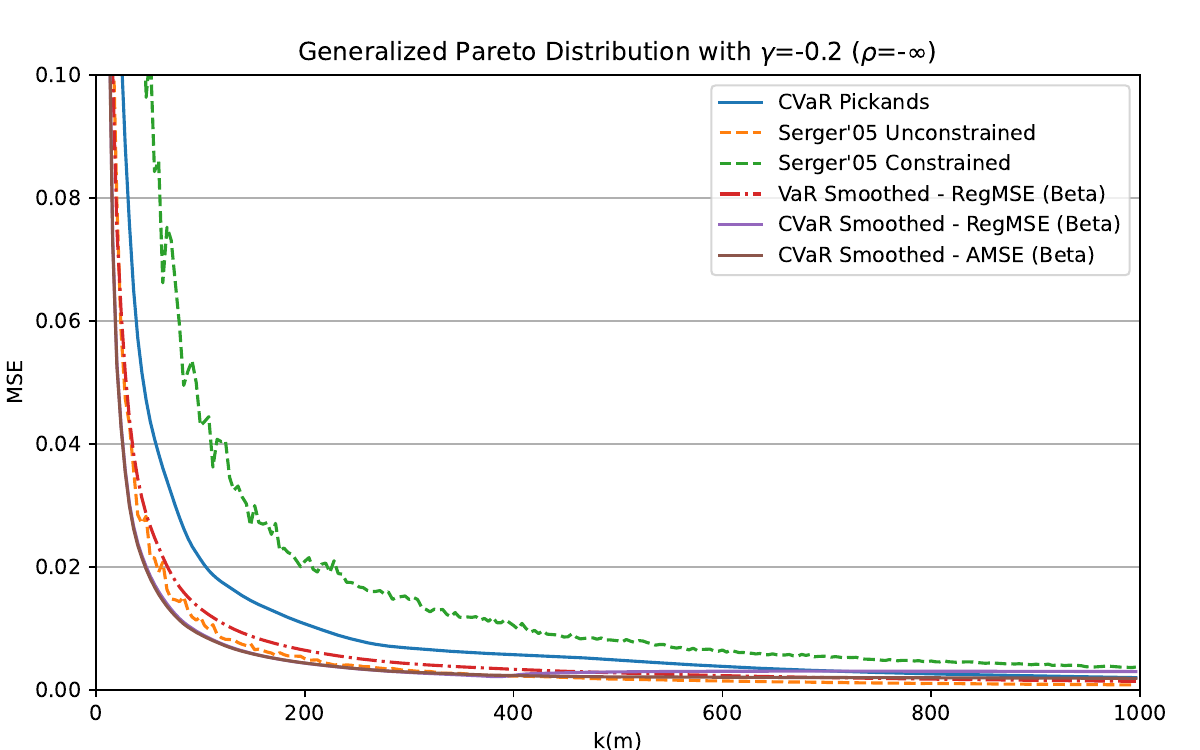}
    \end{subfigure}\hfill
    \begin{subfigure}{0.5\textwidth}
        \includegraphics[width=\linewidth]{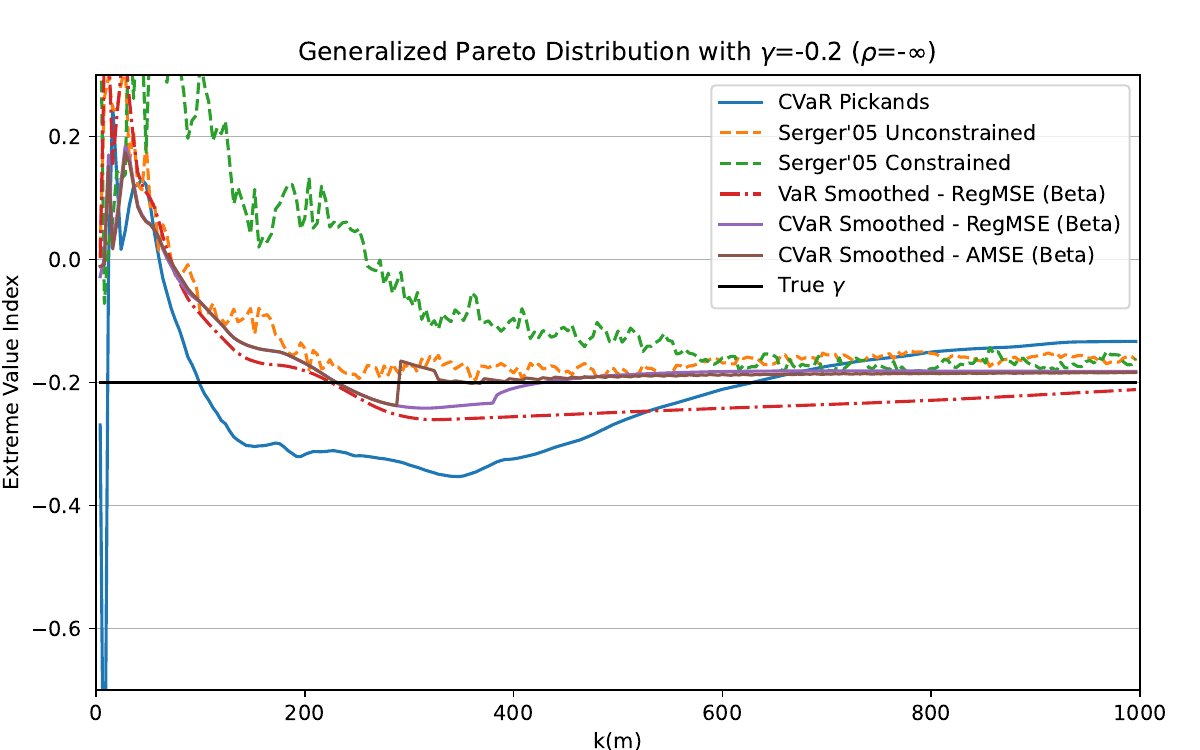}
    \end{subfigure}
    \caption{The samples are from the generalized Pareto distribution with $\gamma=-0.2$, and others are the same as Figure \ref{fig: burr0.25}}
    \label{fig: gpd-0.2}
\end{figure}

\begin{figure}[H]
    \begin{subfigure}{0.5\textwidth}
        \includegraphics[width=\linewidth]{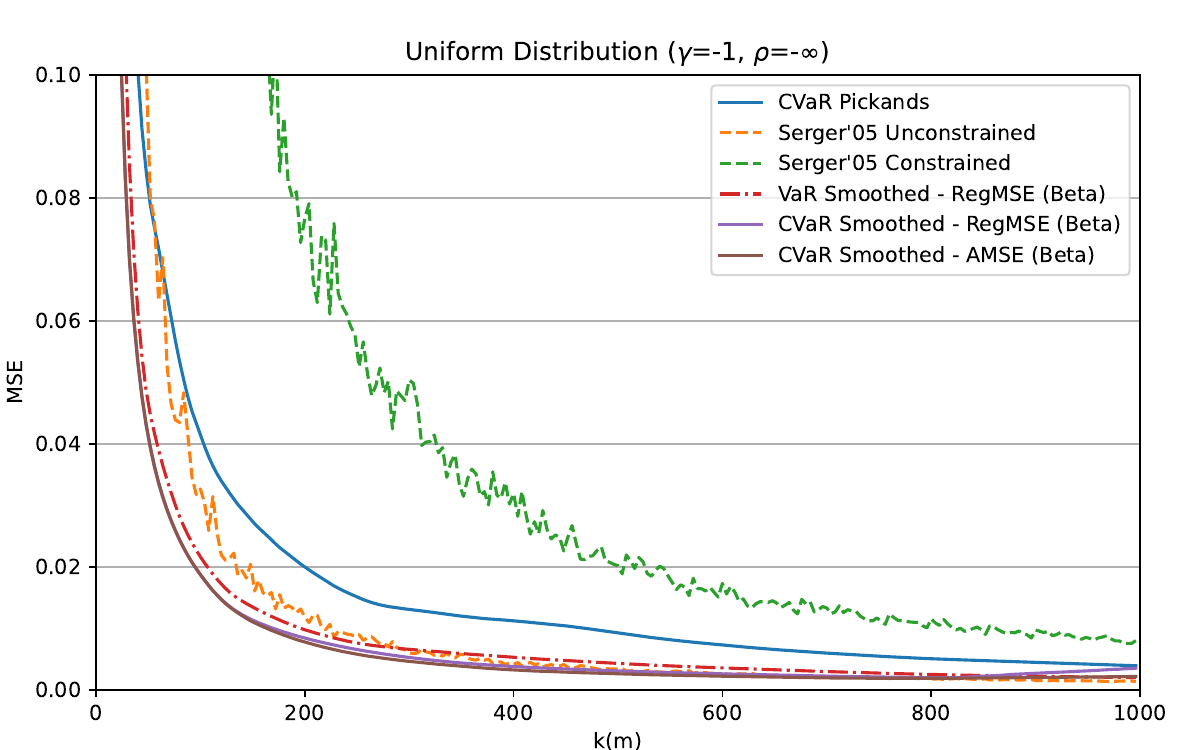}
    \end{subfigure}\hfill
    \begin{subfigure}{0.5\textwidth}
        \includegraphics[width=\linewidth]{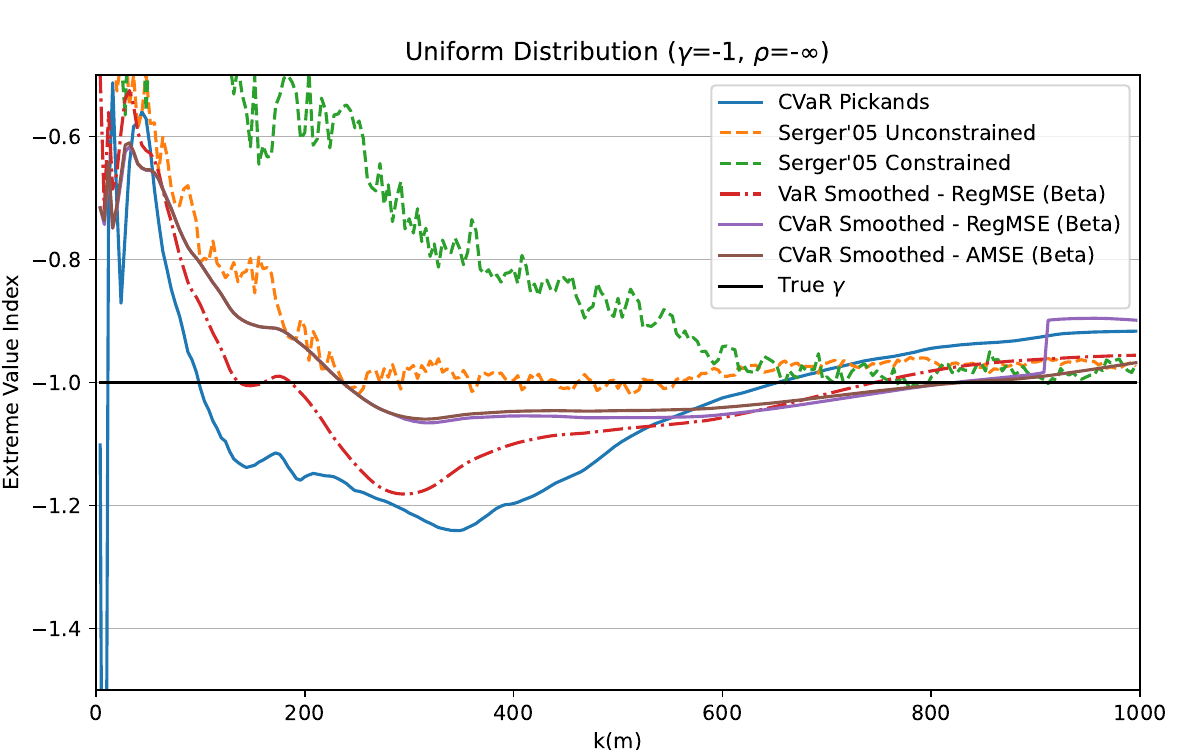}
    \end{subfigure}
    \caption{The samples are from the uniform distribution, and others are the same as Figure \ref{fig: burr0.25}}
    \label{fig: unif}
\end{figure}

\bibliography{myref}
\end{document}